\definecolor{qqqqff}{rgb}{0,0,1}
\newcommand{\Q}{\mathbb{Q}}
\newcommand{\R}{\mathbb{R}}
\newcommand{\Z}{\mathbb{Z}}
\newcommand{\tensor}{\otimes}
\newcommand{\dlra}[1]{\stackrel{#1}{\longrightarrow}}
\newcommand{\Diff}{\text{Diff}}
\newcommand{\wt}{\widetilde}
\newtheorem{problem}{Problem}[section]
\newtheorem{theorem}{Theorem}[section]
\newtheorem{lemma}[theorem]{Lemma}
\newtheorem{corollary}[theorem]{Corollary}
\newtheorem{claim}[theorem]{Claim}
\theoremstyle{definition}
\newtheorem*{definition*}{Definition}
\newtheorem{example}[theorem]{Example}
\theoremstyle{remark}
\newtheorem{remark}[theorem]{Remark}
\numberwithin{equation}{section}
\DeclareMathOperator{\rank}{rk}
\DeclareMathOperator{\kernel}{Ker}
\begin{document}


\title{The rational classification of links\\ of codimension $>2$}

\author{Diarmuid Crowley, Steven C. Ferry and Mikhail Skopenkov}

\date{}
\maketitle









\begin{abstract}




Let $m$ and $p_1, \dots, p_r< m - 2$ be positive integers.
The set of links of codimension $>2$, $E^m(\sqcup_{k=1}^r S^{p_k})$, is the set
%
%
of smooth isotopy classes of smooth embeddings $\sqcup_{k=1}^r S^{p_k}\to S^m$.  Haefliger showed that $E^m(\sqcup_{k=1}^r S^{p_k})$ is a finitely generated abelian group with respect to embedded connected summation and computed its rank in the case of knots, i.e.\,\,$r=1$.   For $r>1$ and for restrictions on $p_1, \dots, p_r$ the rank of this group can be computed using results of Haefliger or Nezhinsky.
Our main result determines the rank of the group $E^m(\sqcup_{k=1}^r S^{p_k})$ in general.  In particular we determine precisely when $E^m(\sqcup_{k=1}^r S^{p_k})$ is finite.  We also accomplish these tasks for framed links.  Our proofs are based on the Haefliger exact sequence for groups of links and the theory of Lie algebras.

\smallskip

\noindent\emph{Keywords}: smooth manifold,
embedding,
isotopy,
link,
homotopy group,
Lie algebra.


\noindent\emph{2000 MSC}: 57R52,57Q45; 55P62, 17B01.
\end{abstract}

\maketitle

\footnotetext[0]{The third author was supported in part by INTAS grant 06-1000014-6277, Moebius Contest Foundation for Young Scientists and Euler Foundation.}

\section{Introduction}\label{sec-intro}

Let $m$ and $p_1, \dots, p_r< m - 2$ be positive integers.
The \emph{set of links} in codimension $>2$ is the set
%
\[ E^m(\sqcup_{k=1}^r S^{p_k}) :=
\bigl\{ f  \colon \! \sqcup_{k=1}^r S^{p_k} \hookrightarrow S^m \bigr\}/\text{isotopy}\]
%
of smooth isotopy classes of smooth embeddings  $f \colon \!  \sqcup_{k=1}^r S^{p_k}\hookrightarrow S^m$.
This set is a finitely generated abelian group with respect to componentwise embedded connected summation \cite{Hae62T}. In addition to its intrinsic mathematical interest, the group of links is related to the classification of handlebodies, the mapping class groups of certain manifolds and to the set of embeddings of more general disjoint unions of manifolds: we discuss these applications in Section~\ref{sec-further}.


Up to extension problems, the computation of the group $E^m(\sqcup_{k=1}^r S^{p_k})$ was reduced to problems in unstable homotopy theory in the seminal papers of Levine and Haefliger \cite{Le65, Hae66A, Hae66C}.  However, these problems include the determination of the unstable homotopy groups of spheres.  Hence one expects that the precise computation of $E^m(\sqcup_{k=1}^r S^{p_k})$ is in general extremely difficult.


In this paper we address the following simpler question: \emph{what is the rank of the group $E^m(\sqcup_{k=1}^r S^{p_k})$ and in particular  for which $m,p_1,\dots,p_r$ is the group $E^m(\sqcup_{k=1}^r S^{p_k})$ finite}? This question is in part motivated by analogy to rational homotopy theory and also the rational classification of link maps by Koschorke \cite{Kos90, HaKa98}.

The main results of this paper give an explicit formula for the rank of the group $E^m_{}(\sqcup_{k=1}^r S^{p_k})$ (Theorem~\ref{thm-general}) and also a criterion which determines precisely when the group of $E^m_{}(\sqcup_{k=1}^r S^{p_k})$ is finite (Corollary~\ref{cor-globalfinite}).
We also accomplish the same tasks for the groups of framed links (Section~\ref{ssec-framed}).

\subsection{Background and a useful observation}
An embedding $f\colon \! \sqcup_{k=1}^r S^{p_k}\hookrightarrow S^m$ is called a \emph{link}, its restrictions $f\colon S^{p_k}\hookrightarrow S^m$ are called \emph{components}.
For 
one-component links, or \emph{knots}, the question posed above was answered by Haefliger:

\begin{theorem} \label{th1} \textup{(See \cite[Corollary~6.7]{Hae66A})} Assume
that $p<m-2$. The group $E^m(S^p)$ has rank $0$ or $1$:  it is infinite if and only if $p+1$ is divisible by $4$ and
$p>\frac{2}{3}(m-2)$.
\end{theorem}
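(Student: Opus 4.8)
The plan is to reduce the computation to rational homotopy theory via Haefliger's exact sequence for the knot group, and then to read off the rank from Bott periodicity together with Serre's finiteness theorem. Concretely, I would invoke the one-component case of the Haefliger exact sequence, which fits $E^m(S^p)$ into a long exact sequence whose remaining terms are homotopy groups of spheres, of the stable orthogonal group $SO$, and of the monoid $G$ of stable self-equivalences of spheres, together with their unstable approximations $SO_{m-p}$ and $G_{m-p}$. Tensoring the whole sequence with $\Q$ and feeding in three standard inputs should collapse most of it: (i) by Serre the stable homotopy groups of spheres $\pi_k^S$ are finite for $k>0$, so $\pi_*(G)\otimes\Q=0$ in positive degrees; (ii) by Serre, $\pi_k(S^n)\otimes\Q$ is $\Q$ for $k=n$, with one extra $\Q$ in degree $k=2n-1$ when $n$ is even, and $0$ otherwise; (iii) by Bott, $\pi_k(SO)\otimes\Q\cong\Q$ if $k\equiv 3\pmod 4$ and is $0$ otherwise.

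Next I would split the argument at the unknotting bound. For $p\le\tfrac23(m-2)$ --- equivalently $2m>3(p+1)$ --- Haefliger's smooth unknotting theorem gives $E^m(S^p)=0$, so the rank is $0$. This already accounts for one half of the criterion, since the stated condition $p>\tfrac23(m-2)$ is exactly the failure of the unknotting range. It therefore remains to treat the case $p>\tfrac23(m-2)$, i.e. below the unknotting range, and to show that the rank there is $1$ precisely when $p\equiv 3\pmod 4$.

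In the below-unknotting range I would locate the single possible free summand. Rationally the sequence should force $E^m(S^p)\otimes\Q$ to be controlled by the Bott class, namely by $\pi_p(SO)\otimes\Q$, equivalently by $\pi_{p+1}(G/O)\otimes\Q\cong\pi_p(O)\otimes\Q$ (using that $G$ is rationally trivial), which is $\Q$ exactly when $p+1$ is divisible by $4$. It is worth stressing that this is the \emph{stable} Bott class, not the unstable $\pi_p(SO_{m-p})$: for example when $(m,p)=(10,7)$ the unstable group $\pi_7(SO_3)=\pi_7(S^3)$ is finite, yet $\pi_7(SO)\cong\Z$ supplies the generator. Two facts must then be established to finish: that the even-target spherical classes of type (ii) never survive to yield an extra free summand, so that the rank is at most $1$; and that the Bott class is realised by a knot invariant of infinite order exactly when $p>\tfrac23(m-2)$.

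This last step is where I expect the real work to lie. Rationally it amounts to computing the ranks of the (unstable) boundary and stabilisation maps in the Haefliger sequence near degree $p$: one must verify that the Bott generator of $\pi_p(SO)\otimes\Q$ (present precisely when $p\equiv3\pmod4$) is \emph{not} annihilated by the adjacent map into $E^m(S^p)\otimes\Q$ exactly when $p>\tfrac23(m-2)$, while every spherical class of type (ii) \emph{is} annihilated. Carrying out this bookkeeping --- tracking which of the finitely many nonvanishing rational homotopy groups of $SO_{m-p}$, $S^{m-p}$ and their stabilisations can map nontrivially in the narrow window $\tfrac23(m-2)<p$ --- is the main obstacle. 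Once it is done, the rank is $0$ outside the range, $0$ inside the range when $p+1$ is not divisible by $4$, and $1$ inside the range when $p+1$ is divisible by $4$, which is exactly the claimed criterion.
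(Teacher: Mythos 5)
The paper does not prove this statement at all: it is imported verbatim from Haefliger \cite[Corollary~6.7]{Hae66A} and used as a black box (e.g.\ in the proof of Theorem~\ref{thm-general} via the constant $c_{p,m}$). So the only meaningful comparison is between your proposal and Haefliger's original argument, whose strategy you have in fact correctly reconstructed: the exact sequence relating $E^m(S^p)$ to homotopy groups of $G$, $SO$, $G_{m-p}$ and $SO_{m-p}$, rationalised using Serre's finiteness theorems and Bott periodicity. Your preliminary reductions are sound --- the equivalence of $p\le\tfrac23(m-2)$ with the Haefliger unknotting range $2m\ge 3p+4$ is correct, and your $(m,p)=(10,7)$ example correctly identifies that the infinite-order invariant comes from the stable class in $\pi_p(SO)\otimes\Q$ rather than from $\pi_p(SO_{m-p})\otimes\Q$.

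The genuine gap is that the entire nontrivial content of the theorem is deferred in your final paragraph and never carried out. Outside the unknotting range the statement is an \emph{if and only if}, and both directions require the computation you postpone: (i) showing that the rational classes coming from $\pi_*(G_{m-p})\otimes\Q$ (the Whitehead-product classes $\pi_{2n-1}(S^n)\otimes\Q\cong\Q$ for $n$ even) are always killed, so the rank never exceeds $1$; and (ii) showing that the Bott class contributes a free summand precisely when $m<3p/2+2$. Point (ii) is delicate: the threshold at which $\pi_p(SO_{m-p})\otimes\Q$ acquires the Pontryagin class is $m-p>p/2+1$, i.e.\ $m>3p/2+1$, which is off by a constant from the theorem's bound $m<3p/2+2$, so one cannot simply read the answer off from the unstable rational homotopy of $SO_{m-p}$ (cf.\ Claim~\ref{cla9}); one must track the class through the relative/triad groups $\pi_{p+1}(G;SO,G_{m-p})$ in Haefliger's sequence and determine exactly where it maps isomorphically onto a summand of $E^m(S^p)\otimes\Q$. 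Without this bookkeeping the proposal establishes only the easy half (rank $0$ in the unknotting range) and a plausible candidate for the generator, not the stated criterion.
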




Typically one approaches 
multi-component links by 
studying the sub-links which are obtained by deleting one or more components from the original link. 
In particular, a link is called 
\emph{Brunnian} if it becomes trivial after removing any one of its components. An example of a nontrivial Brunnian link is the Borromean rings; cf.~\cite{DT-etal-11}. A link \emph{has unknotted components}, if each of its components is a trivial knot.
Denote by $E^m_{\mathrm{B}}(\sqcup_{k=1}^r S^{p_k})$ and
$E^m_{\mathrm{U}}(\sqcup_{k=1}^r S^{p_k})$ respectively the subgroups of Brunnian links and links having unknotted components: observe that these two subgroups coincide for $r=2$.

\begin{theorem}\label{th-structure} \textup{(See \cite[Sections~2.4 and~9.3]{Hae66C})}
Assume that $p_1,\dots,p_r < m-2$. Then there are isomorphisms
\[ E^m(\sqcup_{k=1}^r S^{p_k}) \,\,
\cong \,\, E^m_{\mathrm{U}}(\sqcup_{k=1}^r S^{p_k})
\oplus\bigoplus_{k=1}^r E^m(S^{p_k}) \,\,
\cong \,\, \bigoplus_{S\subset\{1,\dots,r\},\, S\ne\emptyset} E^m_{\mathrm{B}}(\sqcup_{k\in S} S^{p_k}), \]
where the last sum is over all nonempty subsets $S\subset\{1,\dots,r\}$.
\end{theorem}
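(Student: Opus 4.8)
The plan is to derive both isomorphisms from the interplay between the ``delete a component'' homomorphisms and the ``split union'' sections, which are available because in codimension $>2$ general position lets one push any knot or sublink into an arbitrarily small ball. Write $[r]:=\{1,\dots,r\}$, and for each $i\in[r]$ let $\rho_i\colon E^m(\sqcup_{k=1}^r S^{p_k})\to E^m(\sqcup_{k\ne i}S^{p_k})$ be the homomorphism forgetting the $i$-th component, with section $s_i$ reinserting an unknotted $i$-th component inside a small ball disjoint from the image. These are homomorphisms for componentwise connected sum because deletion commutes with summation and the reinserted ball can be kept away from the summation region; this is exactly where the hypothesis $p_1,\dots,p_r<m-2$ enters, as in Haefliger's construction of the group structure.

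For the first isomorphism I would consider the homomorphism $c$ recording the $r$ individual components (each obtained by deleting all the other components), $c\colon E^m(\sqcup_{k=1}^r S^{p_k})\to\bigoplus_{k=1}^r E^m(S^{p_k})$. Its kernel is by definition the subgroup $E^m_{\mathrm{U}}(\sqcup_{k=1}^r S^{p_k})$ of links with unknotted components, and the split union of $r$ knots placed in disjoint balls provides a homomorphic section of $c$. Hence the short exact sequence $0\to E^m_{\mathrm{U}}\to E^m\to\bigoplus_k E^m(S^{p_k})\to 0$ splits, which gives the first isomorphism.

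For the second isomorphism I would argue by induction on $r$, using the symmetric restriction map $\partial=(\rho_1,\dots,\rho_r)\colon E^m(\sqcup_{k=1}^r S^{p_k})\to\bigoplus_{i=1}^r E^m(\sqcup_{k\ne i}S^{p_k})$, whose kernel is precisely the Brunnian subgroup $E^m_{\mathrm{B}}(\sqcup_{k=1}^r S^{p_k})$. The split-union sections assemble into a homomorphism $\sigma$ sending a Brunnian link on a proper nonempty subset $S\subsetneq[r]$, together with split unknots on the complement, into $E^m(\sqcup_{k=1}^r S^{p_k})$. Applying the inductive hypothesis to each target $E^m(\sqcup_{k\ne i}S^{p_k})$, one identifies the image of $\partial$ with the ``compatible tuples'' inside $\bigoplus_{i}\bigoplus_{\emptyset\ne S\subseteq[r]\setminus i}E^m_{\mathrm{B}}(\sqcup_{k\in S}S^{p_k})$. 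Since a summand indexed by $S\subsetneq[r]$ occurs in the $i$-th factor exactly when $i\notin S$, and $\sigma$ (using that a Brunnian link restricts trivially under any single deletion) maps the corresponding Brunnian link consistently into all those factors and trivially into the factors with $i\in S$, the composite $\partial\circ\sigma$ is an isomorphism onto the subgroup of compatible tuples. Thus $\sigma$ furnishes a complement to $\ker\partial=E^m_{\mathrm{B}}(\sqcup_{k=1}^r S^{p_k})$, and collecting the singleton summands $E^m_{\mathrm{B}}(S^{p_k})=E^m(S^{p_k})$ yields the decomposition $E^m(\sqcup_{k=1}^r S^{p_k})\cong\bigoplus_{\emptyset\ne S\subseteq[r]}E^m_{\mathrm{B}}(\sqcup_{k\in S}S^{p_k})$, which refines and is compatible with the first isomorphism.

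I expect the main obstacle to be proving that $\partial$ actually surjects onto the subgroup of compatible tuples and that this subgroup coincides with the image of $\sigma$; equivalently, that every link is, up to the ambiguity measured by $\ker\partial$, a split union of a Brunnian link with lower-order Brunnian pieces. This is the one genuinely geometric point: given a system of sublinks that agree on all further restrictions, one must construct an honest embedding of $\sqcup_{k=1}^r S^{p_k}$ inducing them, and it is here that codimension $>2$ and Haefliger's general-position and separation techniques do the real work. Once surjectivity is in hand, the remaining step is the formal inclusion–exclusion bookkeeping over the Boolean lattice of subsets of $[r]$.
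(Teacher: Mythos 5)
The paper offers no proof of this statement---it is quoted from Haefliger \cite{Hae66C}---and your argument is a correct reconstruction of the standard one: the component-recording and component-deleting homomorphisms are split by the split-union sections available in codimension $\ge 3$, and the subset decomposition follows by induction together with the fact that a Brunnian link on $S$ dies under deletion of any $i\in S$. The one place you overestimate the difficulty is the surjectivity of $\partial$ onto the compatible tuples: since $\partial\circ\sigma$ already hits every compatible tuple (and $\operatorname{im}\partial$ is trivially contained in them), no geometric construction beyond the split unions defining $\sigma$ is required.
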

%
%
%
%
Under certain dimension restrictions Haefliger and Nezhinsky have found explicit descriptions of the isotopy classes of Brunnian links in terms of homotopy groups of spheres and Stiefel manifolds \cite{Hae66A, Hae66C, Ne82, Sko08P}. 

For spheres of arbitrary dimensions $p_k$ (with each $p_k < m-2)$ Haefliger constructed a 
long exact sequence (see \cite[Theorem~1.3]{Hae66C}, \cite[Theorem~1.1]{Ha86}):
%
%
%
\begin{equation} \label{seq:UHae}
\dots \to \Pi_{m-1}^{(q)} \dlra{\mu} E^m_{\mathrm{U}}(\sqcup_{k=1}^r S^{p_k}) \dlra{\lambda}  \Lambda_{(p)}^{(q)} \dlra{w} \Pi_{m-2}^{(q)} \dlra{\mu}  E^{m-1}_{\mathrm{U}}(\sqcup_{k=1}^r S^{p_k-1}) \to \dots
\end{equation}
Here $\Lambda_{(p)}^{(q)}$ and $\Pi_{m-2}^{(q)}$ are certain finitely generated abelian groups defined via homomorphisms between the homotopy groups of appropriate wedges of spheres, see 
Section~\ref{ssec-Hae} for their definition.  The homomorphisms $\lambda$ and $\mu$ are topologically defined in \cite[Section~1.4]{Hae66C}: we do not explicitly consider them in this paper. Rather we note that up to extension, the group of links is determined by the homomorphism $w$.
Moreover, since $w$ is defined using Whitehead products, the Haefliger sequence~\eqref{seq:UHae} reduces the determination of the group $E_{}(\sqcup_{k=1}^r S^{p_k})$ to a problem in unstable homotopy theory and an extension problem.  In particular determining the rank of $E_{}(\sqcup_{k=1}^r S^{p_k})$ is reduced to a problem in unstable homotopy theory.



The starting point of our investigation is the following simple but crucial observation:

\begin{lemma} \label{lem:Hae-splits}
After tensoring with $\Q$ the Haefliger sequence~\eqref{seq:UHae} splits into the short exact sequences
\[ 0 \to E^m_{\mathrm{U}}(\sqcup_{k=1}^r S^{p_k}) \tensor \Q \stackrel{\lambda \tensor {\rm Id}_\Q} {\xrightarrow{\hspace*{1cm}}} \Lambda_{(p)}^{(q)} \tensor \Q \stackrel{w \tensor {\rm Id}_\Q} {\xrightarrow{\hspace*{1cm}}} \Pi_{m-2}^{(q)} \tensor \Q \to 0.\]
%
%
\end{lemma}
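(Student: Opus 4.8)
The plan is to show that the long exact sequence \eqref{seq:UHae}, after tensoring with $\Q$, breaks apart at each group $E^m_{\mathrm{U}}(\sqcup_{k=1}^r S^{p_k})\tensor\Q$. Since tensoring a finitely generated abelian group with $\Q$ is an exact functor (it kills torsion and preserves exactness), the tensored sequence is still exact. Hence it suffices to prove that the map $\mu\tensor{\rm Id}_\Q$ entering each copy of $E^m_{\mathrm{U}}(\sqcup_{k=1}^r S^{p_k})\tensor\Q$ is the zero map; equivalently, that $\lambda\tensor{\rm Id}_\Q$ is injective. Once $\mu\tensor{\rm Id}_\Q=0$, exactness forces the displayed three-term sequences to be short exact: the image of $\mu\tensor{\rm Id}_\Q$ being zero makes $\lambda\tensor{\rm Id}_\Q$ injective, and exactness at $\Lambda^{(q)}_{(p)}\tensor\Q$ and at $\Pi^{(q)}_{m-2}\tensor\Q$ gives exactness in the middle and surjectivity of $w\tensor{\rm Id}_\Q$ respectively.

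First I would identify the domain of the relevant $\mu$. The group $\Pi_{m-1}^{(q)}$ mapping into $E^m_{\mathrm{U}}(\sqcup_{k=1}^r S^{p_k})$ is, by its definition in Section~\ref{ssec-Hae}, built from homotopy groups of wedges of spheres in the stable or metastable range, and these are finite in the relevant degrees; concretely, the source of $\mu$ should turn out to be a \emph{finite} abelian group. If $\Pi_{m-1}^{(q)}$ is finite then $\Pi_{m-1}^{(q)}\tensor\Q=0$, so $\mu\tensor{\rm Id}_\Q$ is automatically zero, and the same argument applied one dimension down shows $\mu\tensor{\rm Id}_\Q=0$ at every stage of the sequence. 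Thus the essential content reduces to a rational-finiteness statement about the groups $\Pi^{(q)}_{\ast}$.

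The main obstacle, therefore, is to verify that $\Pi_{m-1}^{(q)}\tensor\Q=0$, i.e. that $\Pi_{m-1}^{(q)}$ has rank zero. This is where the precise definition from Section~\ref{ssec-Hae} must be used: $\Pi^{(q)}_{\ast}$ is defined as a quotient (or kernel) involving homotopy groups $\pi_{\ast}$ of a wedge of spheres $\bigvee S^{q_k}$, and after rationalization these homotopy groups are governed by the free graded Lie algebra on the generators dual to the wedge summands (the Milnor--Hilton theorem, or equivalently the rational homotopy of a wedge). I would argue that in the degree $m-1$ the rational homotopy contributing to $\Pi^{(q)}_{m-1}$ either vanishes for dimension reasons or is entirely accounted for by the image/relations defining $\Pi^{(q)}$, leaving rank zero. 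An alternative, cleaner route avoiding any hands-on computation: observe that $\mu$ is topologically the map that assigns to a class the connected sum with a knotting datum, and $E^m_{\mathrm{U}}$ consists of links with \emph{unknotted} components, so the relevant $\mu$ should factor through a group whose rationalization vanishes — but pinning this down still requires the definitions of Section~\ref{ssec-Hae}, so I would present the finiteness of $\Pi^{(q)}_{m-1}$ as the key lemma and derive the splitting formally from exactness of $-\tensor\Q$ as above.
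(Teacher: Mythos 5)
Your formal reduction is fine: tensoring with $\Q$ is exact, so the rationalized sequence~\eqref{seq:UHae} is exact, and it splits into the displayed short exact sequences if and only if every connecting map $\mu\tensor{\rm Id}_\Q$ vanishes. The gap is in how you propose to prove that vanishing. Your key claim --- that $\Pi^{(q)}_{m-1}$ is a finite group, so that its rationalization is zero --- is false in general. By the rational Hilton--Milnor theorem, $\Pi^{(q)}_{m-1}\tensor\Q$ is the degree-$(m-2)$ part of the sub-Lie-superalgebra of $L$ spanned by brackets of length at least two in the generators $P_1,\dots,P_r$ of degrees $m-p_k-2$. This is frequently nonzero: for instance, with $r=2$, $p_1=p_2=p$ and $m=2p+2$, the bracket $[P_1,P_2]$ has degree $2p=m-2$ and is nonzero in the free Lie superalgebra, so $\Pi^{(q)}_{m-1}\tensor\Q\neq 0$. (More is true: $\Pi^{(q)}_{m-2}\tensor\Q$ is the third term of the short exact sequence being proved and is generally nonzero, and the degree-shifted groups $\Pi^{(q)}_{m-1}$ behave the same way.) So $\mu\tensor{\rm Id}_\Q=0$ cannot be established by killing its source.

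The correct mechanism, and the one the paper uses, is to kill $\mu\tensor{\rm Id}_\Q$ from the left: by exactness, $\ker(\mu\tensor{\rm Id}_\Q)$ is the image of the preceding map $w\tensor{\rm Id}_\Q\colon \Lambda^{(q)}_{(p+1)}\tensor\Q\to\Pi^{(q)}_{m-1}\tensor\Q$, so it suffices to show that $w$ is \emph{rationally surjective} in every degree. This is the content of Lemma~\ref{lker}(a): after translating to the free Lie superalgebra via Theorem~\ref{thhae}, one checks (Claim~\ref{cla5}, by induction on bracket length using the graded Jacobi identity) that every bracket $[u,v]$ with $u,v$ products of generators can be rewritten as $\sum_k[P_k,v_k]$, i.e.\ lies in the image of $w$. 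Combining with the decomposition into Brunnian pieces (Theorem~\ref{th-structure}) gives the surjectivity for the sequence~\eqref{seq:UHae} and hence the splitting. Your second, ``cleaner'' alternative gestures at the right idea when you say the relevant classes are ``entirely accounted for by the image,'' but as written it is not an argument; the Lie-algebra identity is the actual content and cannot be avoided. Note also that this surjectivity genuinely fails integrally (Lemma~\ref{lem-integralnonsplit}), so any proof must use rationalization in an essential way --- another sign that a purely formal finiteness argument cannot work.
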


The Haefliger sequence~\eqref{seq:UHae} and
Lemma~\ref{lem:Hae-splits} are the basis for all the results stated in the remainder of the introduction. 
Note that the Haefliger sequence~\eqref{seq:UHae} itself does not split as in Lemma~\ref{lem:Hae-splits} in general; see Lemma~\ref{lem-integralnonsplit} below.


\subsection{Finiteness criteria for the group of links}\label{ssec-fin}

In this subsection we state criteria which determine precisely when the group $E^m_{}(\sqcup_{k=1}^r S^{p_k})$ is finite.  We begin with finiteness criteria for \emph{Brunnian links} and conclude with the general case.






\begin{theorem}\label{thm-1a}
Assume that $r>2$ and $p_1,\dots,p_r<m-2$. Then
the group $E^m_{\mathrm{B}}(\sqcup_{k=1}^r S^{p_k})$
is infinite if and only if the equation $(m-p_1-2)x_1+\dots+(m-p_r-2)x_r=m-3$ has a solution in positive integers $x_1,\dots,x_r$.
\end{theorem}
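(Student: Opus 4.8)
The plan is to rationalize the Haefliger sequence with Lemma~\ref{lem:Hae-splits}, translate the resulting short exact sequence into the language of free graded Lie algebras, and reduce the finiteness question to the solvability of the displayed Diophantine equation. The first step is to isolate the Brunnian summand using Theorem~\ref{th-structure}: since
\[ E^m_{\mathrm U}(\textstyle\sqcup_{k=1}^r S^{p_k}) \;\cong\; \bigoplus_{S\subset\{1,\dots,r\},\,|S|\ge 2} E^m_{\mathrm B}(\textstyle\sqcup_{k\in S}S^{p_k}), \]
the group $E^m_{\mathrm B}(\sqcup_{k=1}^r S^{p_k})$ is exactly the part of $E^m_{\mathrm U}$ not detected on any proper sublink, i.e.\ the ``full support'' part. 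Combined with the short exact sequence of Lemma~\ref{lem:Hae-splits} this expresses $\rank E^m_{\mathrm B}(\sqcup_{k=1}^r S^{p_k})$ as the dimension of the full-support part of $\ker(w\tensor\mathrm{Id}_\Q)$, so finiteness is equivalent to the vanishing of that full-support kernel.

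Next I would identify the rationalized groups in \eqref{seq:UHae} with free graded Lie algebra data. By the rational Hilton--Milnor theorem, $\pi_*(\bigvee_{k=1}^r S^{m-p_k-1})\tensor\Q$ is the free graded Lie algebra $L$ on generators $\iota_1,\dots,\iota_r$ with $\deg\iota_k = m-p_k-2$, the Whitehead product playing the role of the bracket. Under this identification $\Pi^{(q)}_{m-2}\tensor\Q$ corresponds to the degree $(m-3)$ component of $L$, the group $\Lambda^{(q)}_{(p)}\tensor\Q$ is a direct sum, over $k$, of homotopy groups of wedges of the remaining meridian spheres, and $w\tensor\mathrm{Id}_\Q$ is assembled from the bracketing maps $[\iota_k,-]$. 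All of these respect the grading by multidegree $(x_1,\dots,x_r)$ recording the number of occurrences of each $\iota_k$; the total degree is fixed at $m-3$, ``full support'' means every $x_k\ge 1$, and the summand $E^m_{\mathrm B}(\sqcup_{k\in S})\tensor\Q$ matches the multidegrees supported exactly on $S$. Thus a full-support multidegree that can contribute is precisely a positive solution of $\sum_k (m-p_k-2)x_k = m-3$.

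This already gives the easy direction: if the equation has no positive solution, then $L_{m-3}$ has no full-support component, so both $\Lambda\tensor\Q$ and $\Pi\tensor\Q$ vanish in every full-support multidegree, the full-support kernel is $0$, and $E^m_{\mathrm B}(\sqcup_{k=1}^r S^{p_k})$ is finite. For the converse I would show that the existence of a positive solution forces $\ker(w\tensor\mathrm{Id}_\Q)$ to be nonzero in the corresponding multidegree. The conceptual source of kernel elements is the Jacobi identity: for three or more generators a cyclic combination $\sum_k \pm[\iota_k,\alpha_k]$ vanishes in $L$ while the tuple $(\alpha_k)$ is a nonzero element of $\Lambda\tensor\Q$. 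This is the algebraic shadow of the Borromean rings, and the general case is obtained by inserting iterated Jacobi syzygies into a Lie monomial of the prescribed multidegree. Concretely I expect to prove the strict inequality $\dim_\Q \Lambda^{\mathrm{full}}_{(x)} > \dim_\Q \Pi^{\mathrm{full}}_{(x)}$ for every full-support multidegree $(x)$ with at least three nonzero entries, via the super-analogue of the Witt formula coming from the Poincar\'e--Birkhoff--Witt decomposition of $U(L)$ and the attendant generating-function identities.

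The main obstacle will be this last inequality, and in particular making it uniform in the parities of the degrees $m-p_k-2$. Because $L$ is a \emph{super} Lie algebra, the dimensions of its multigraded pieces carry signs, and I must verify that the super-cancellations never destroy a full-support component once more than two generators are present; this is what yields the parity-free statement of the theorem. The hypothesis $r>2$ enters precisely here: for $r\le 2$ there are no Jacobi syzygies, the strict inequality fails, and the criterion is genuinely different. Establishing the strict inequality, equivalently exhibiting an explicit nonzero Jacobi-type element of $\ker(w\tensor\mathrm{Id}_\Q)$ for each positive solution, is therefore the crux of the argument.
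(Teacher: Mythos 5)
Your reduction coincides with the paper's: rationalize the Haefliger sequence, identify $\pi_*(\vee_{k=1}^r S^{m-p_k-1})\tensor\Q$ with the free graded Lie superalgebra $L$ on generators of degrees $m-p_k-2$ via the rational Hilton--Milnor theorem, decompose everything by multidegree, and observe that the full-support multidegrees in total degree $m-3$ are exactly the positive solutions of $(m-p_1-2)x_1+\dots+(m-p_r-2)x_r=m-3$. The ``no solution $\Rightarrow$ finite'' direction is then complete. The gap is the converse, which you correctly name as the crux but do not prove: one must show that for $r>2$ \emph{every} full-support multidegree $(x_1,\dots,x_r)$ in degree $m-3$ contributes a nonzero kernel, i.e.\ that the multiplicity $m(x_1,\dots,x_r)=d(x_1-1,x_2,\dots,x_r)+\dots+d(x_1,\dots,x_r-1)-d(x_1,\dots,x_r)$ is strictly positive (Lemma~\ref{lalgb}). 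Your proposed mechanism --- inserting iterated Jacobi syzygies into a Lie monomial of the prescribed multidegree --- is only known to work when some $x_k=1$: that is exactly Claim~\ref{cla8}, where $0=[P_1,[u,v]]\pm[u,[P_1,v]]\pm[v,[P_1,u]]$ yields an explicit kernel element. When all $x_k\ge 2$ no such explicit syzygy is exhibited anywhere; indeed the paper lists a proof of Theorem~\ref{thm-1a} via ``identities in Lie algebras'' as an open problem in Section~\ref{ssec-open}.

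For the case $x_k\ge 2$ the paper instead takes the multigraded super-Witt formula~(\ref{eq3}) and proves positivity by quantitative estimates: the leading term of $m(x_1,\dots,x_r)$ is $\binom{x_1+\dots+x_r}{x_1,\dots,x_r}\bigl((x_1+\dots+x_r)(x_1+\dots+x_r-1)\bigr)^{-1}$, and the M\"obius/parity correction terms (your ``super-cancellations'') are controlled using a divisor count $n(x_1)\le 2\sqrt{x_1}$, Stirling's formula and monotonicity arguments (Claims~\ref{cla2}, \ref{cla3}, \ref{cla6} and~\ref{cla7}). This step is genuinely delicate and cannot be waved through as a formal generating-function identity: for $r=2$ the very same quantity vanishes at infinitely many full-support multidegrees, which is why Theorem~\ref{thm-1b} requires the finiteness-checking sets, and your side remark that ``for $r\le2$ the strict inequality fails'' is itself inaccurate (for $r=2$ it holds or fails depending on $(x_1,x_2)$ and the parities). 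Until you either produce the explicit syzygies for all $x_k\ge 2$ or carry out estimates of this kind, the ``solution $\Rightarrow$ infinite'' direction remains unproven.
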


In the case of $2$ component links the criterion is more complicated. It involves certain \emph{finiteness-checking sets} $FCS (i,j)\subset \mathbb{Z}^2$ which depend only on the parity of $i$ and $j$ which are defined in Table~\ref{formal} below.
A part of each set is drawn in the table; the rest of the set is obtained by the evident periodicity.

\begin{theorem} \label{thm-1b}
Assume that $p_1,p_2< m-2$. Then
the group $E^m_{\mathrm{B}}(\sqcup_{k=1}^2 S^{p_k})$
is infinite if and only if
there exists a point $(x_1,x_2)\in FCS (m-p_1,m-p_2)$ such that $(m-p_1-2)x_1+(m-p_2-2)x_2=m-3$.
\end{theorem}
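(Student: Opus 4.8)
The plan is to reduce the finiteness question to a rank computation in a free graded Lie algebra on two generators, and then to extract the parity-dependent combinatorics encoded by the sets $FCS(i,j)$. First I would use the observation recorded just before Theorem~\ref{th-structure} that for $r=2$ the subgroups of Brunnian links and of links with unknotted components coincide, so $E^m_{\mathrm{B}}(\sqcup_{k=1}^2 S^{p_k}) = E^m_{\mathrm{U}}(\sqcup_{k=1}^2 S^{p_k})$. By Lemma~\ref{lem:Hae-splits} the rationalized Haefliger sequence~\eqref{seq:UHae} gives a short exact sequence, so that
\[ E^m_{\mathrm{B}}(\sqcup_{k=1}^2 S^{p_k}) \tensor \Q \,\cong\, \kernel\bigl(w\tensor{\rm Id}_\Q\colon \Lambda_{(p)}^{(q)}\tensor\Q \to \Pi_{m-2}^{(q)}\tensor\Q\bigr). \]
Since $w\tensor{\rm Id}_\Q$ is surjective, the group $E^m_{\mathrm{B}}(\sqcup_{k=1}^2 S^{p_k})$ is finite if and only if $w\tensor{\rm Id}_\Q$ is injective, equivalently if and only if $\dim_\Q(\Lambda_{(p)}^{(q)}\tensor\Q)=\dim_\Q(\Pi_{m-2}^{(q)}\tensor\Q)$. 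Thus everything is reduced to a dimension count on the two sides of the rational bracket map $w$.

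Next I would identify these two rational groups Lie-theoretically. Writing $q_k = m-p_k-1$ and $a_k = m-p_k-2 = q_k-1$, the definitions in Section~\ref{ssec-Hae} express $\Lambda_{(p)}^{(q)}$ and $\Pi_{m-2}^{(q)}$ through homotopy groups of wedges of spheres, with $w$ given by Whitehead products. By the Hilton--Milnor theorem the rational homotopy of the wedge $S^{q_1}\vee S^{q_2}$ is the free graded Lie algebra $L(\alpha_1,\alpha_2)$ on two generators of Lie degrees $a_1$ and $a_2$, the Whitehead product corresponding to the Lie bracket. Grading by multidegree $(x_1,x_2)$ (the number of occurrences of $\alpha_1$ and $\alpha_2$), a bracket monomial of multidegree $(x_1,x_2)$ has Lie degree $a_1x_1+a_2x_2$, hence represents a class in $\pi_{a_1x_1+a_2x_2+1}$. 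The Brunnian/unknotted condition discards the multidegrees with $x_1=0$ or $x_2=0$, and matching the relevant homotopy degree $m-2$ singles out exactly the Diophantine constraint
\[ a_1x_1+a_2x_2 = (m-p_1-2)x_1+(m-p_2-2)x_2 = m-3, \qquad x_1,x_2\ge 1, \]
which is the equation appearing in the statement.

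Because the Whitehead product is homogeneous for the multidegree grading, the rational sequence of Lemma~\ref{lem:Hae-splits} splits as a direct sum of short exact sequences indexed by multidegree. Consequently $w\tensor{\rm Id}_\Q$ fails to be injective precisely when some summand contributes a nontrivial kernel, and the rank of $E^m_{\mathrm{B}}(\sqcup_{k=1}^2 S^{p_k})$ is the sum of the kernel dimensions of the local bracket maps over the lattice points $(x_1,x_2)$ with $x_1,x_2\ge1$ on the line $a_1x_1+a_2x_2=m-3$. I would then compute, for each such $(x_1,x_2)$, whether the corresponding bracket map is injective, using the structure of the free graded Lie algebra on two generators: here the parities of $a_1$ and $a_2$ are decisive, since an even-degree generator $\alpha_k$ satisfies $[\alpha_k,\alpha_k]=0$ while an odd-degree generator has a nonzero self-bracket, so the multidegree-$(x_1,x_2)$ dimensions (given by a graded Witt-type formula) depend only on the parities of $a_1,a_2$, equivalently of $m-p_1,m-p_2$. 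Tabulating which multidegrees yield a nonvanishing contribution in each of the four parity cases is exactly the content of the sets $FCS(m-p_1,m-p_2)$ of Table~\ref{formal}, and the fact that the dimension counts depend only on the degrees modulo~$2$ accounts for the asserted periodicity of $FCS$.

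The main obstacle is the last step: the explicit determination, bidegree by bidegree, of the rank of the Whitehead-product map between the two free-Lie-algebra components, with careful bookkeeping of the graded signs. This is where the genuinely new phenomenon for $r=2$ appears --- in contrast to the clean criterion of Theorem~\ref{thm-1a} for $r>2$, a lattice point on the line $a_1x_1+a_2x_2=m-3$ may contribute zero even though it exists, because the local bracket map happens to be injective for parity reasons. Pinning down precisely these degenerate multidegrees, and verifying that the surviving ones assemble into the four periodic patterns $FCS(i,j)$, is the technical heart of the proof.
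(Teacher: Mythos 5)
Your reduction is exactly the paper's: for $r=2$ Brunnian equals unknotted-components, the rationalized Haefliger sequence identifies $E^m_{\mathrm{B}}(\sqcup_{k=1}^2 S^{p_k})\otimes\Q$ with $\kernel(w\otimes{\rm Id}_\Q)$, the rational Hilton--Milnor theorem replaces the homotopy of the wedge by the free graded Lie superalgebra on generators of degrees $m-p_k-2$ (this is Theorem~\ref{thhae} in the paper), and the multidegree grading splits $w$ into local bracket maps indexed by lattice points $(x_1,x_2)$ with $x_1,x_2\ge1$ on the line $(m-p_1-2)x_1+(m-p_2-2)x_2=m-3$, whose kernel dimensions $m(x_1,x_2)$ sum to the rank (Theorem~\ref{thm-general}, via Lemma~\ref{lker} and Claim~\ref{cla5} for the local surjectivity you implicitly use). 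Up to this point your argument is sound and coincides with the paper's route.

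However, the step you defer --- ``tabulating which multidegrees yield a nonvanishing contribution'' --- is not a finite tabulation and is the actual content of the theorem; as it stands this is a genuine gap. The sets $FCS(i,j)$ are infinite, so one must prove $m(x,y)>0$ or $m(x,y)=0$ for infinitely many $(x,y)$ in each parity class. The paper does this (Lemma~\ref{lalg}) by first invoking Petrogradsky's closed formula for the dimensions $d(x_1,x_2)$ of the multigraded components of the free graded Lie superalgebra (Theorem~\ref{thlie}, giving formula~(\ref{eq3}) and hence $m(x,y)=d(x-1,y)+d(x,y-1)-d(x,y)$), and then running a case analysis on $x=\min(x,y)$: direct computation for $x=1$ and for the finitely many small cases with $x\in\{2,3,4\}$, an explicit evaluation $d(2,y)=[\tfrac{y+1}{2}]$ (and its odd-parity analogues) to get the periodic behaviour at $x=2$, and for the remaining infinitely many points a lower bound on $m(x,y)$ in terms of multinomial coefficients (Claim~\ref{cla2}) sharpened by Stirling-type estimates (Claims~\ref{cla6} and~\ref{cla7}) to force positivity when $x\ge3,\,y\ge 6$ or $x,y\ge5$. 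Note also that your heuristic ``the parities of the generators are decisive via $[\alpha_k,\alpha_k]=0$'' is true but far from sufficient: the zero set of $m(x,y)$ is not simply the set of multidegrees killed by self-bracket relations (e.g.\ $m(3,4)=0$ and $m(5,2)=0$ in the even--even case for non-obvious cancellation reasons), which is precisely why the explicit M\"obius-sum formula and the estimates are needed.
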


The finiteness-checking set can be considered as a nomogram: to establish infiniteness one draws the line $(m-p_1-2)x_1+(m-p_2-2)x_2=m-3$ and looks if the line intersects the set $FCS (m-p_1,m-p_2)$.


\begin{table}[h]
\caption{Definition of the finiteness-checking set $FCS (i,j)$}
\label{formal}
\begin{center}
\begin{tabular}{|p{4cm}|p{4cm}|p{4cm}|}
\hline
\multicolumn{3}{|p{12cm}|}{$FCS (i,j)$ is the set of pairs $(x,y)\in \mathbb{Z}\times\mathbb{Z}$ such that $x,y>0$ and at least one of the following conditions holds~---}\\[2pt]
\hline
\multicolumn{1}{|c|}{for $i,j$ even:} & \multicolumn{1}{|c|}{for $i$ odd, $j$ even:} & \multicolumn{1}{|c|}{for $i,j$ odd:}\\[2pt]
\hline
\begin{itemize}
\item $x=1$ and $y=1$;
\item $x=2$ and $2\,|\,y$;
\item $x=3$ and $y=3$;
\item $x=3$ and $y\ge5$;
\item $x\ge4$ and $y\ge4$;
\item $2\,|\,x$ and $y=2$;
\item $x\ge5$ and $y=3$.
\end{itemize}
&
\begin{itemize}
\item $x=1$ and $y=1$;
\item $x=2$ and $2\,|\,y+1$;
\item $x=3$ and $y\ge2$;
\item $x\ge4$ and $y\ge4$;
\item $4\,|\,x$ and $y=2$;
\item $4\,|\,x+1$ and $y=2$;
\item $x\ge5$ and $y=3$.
\end{itemize}
&
\begin{itemize}
\item $x=1$ and $y=1$;
\item $x=2$ and $4\,|\,y+2$;
\item $x=2$ and $4\,|\,y+3$;
\item $x\ge 3$ and $y\ge3$;
\item $4\,|\,x+2$ and $y=2$;
\item $4\,|\,x+3$ and $y=2$.
\end{itemize}\\[2pt]
\hline
\multicolumn{1}{|c|}{\quad \begin{tikzpicture}[line cap=round,line join=round,>=triangle 45,x=0.25cm,y=0.25cm]
\draw[->,color=black] (-0.5,0) -- (10.6,0);
\foreach \x in {,1,2,3,4,5,6,7,8,9,10}
\draw[shift={(\x,0)},color=black] (0pt,2pt) -- (0pt,-2pt);
\draw[color=black] (10.3,0.08) node [anchor=south west] { x};
\draw[->,color=black] (0,-0.46) -- (0,10.48);
\foreach \y in {,1,2,3,4,5,6,7,8,9,10}
\draw[shift={(0,\y)},color=black] (2pt,0pt) -- (-2pt,0pt);
\draw[color=black] (0.1,10.06) node [anchor=west] { y};
\clip(-0.5,-0.46) rectangle (10.6,10.48);
\fill [color=qqqqff] (1,1) circle (1.5pt);
\fill [color=qqqqff] (2,2) circle (1.5pt);
\fill [color=qqqqff] (3,3) circle (1.5pt);
\fill [color=qqqqff] (2,4) circle (1.5pt);
\fill [color=qqqqff] (2,6) circle (1.5pt);
\fill [color=qqqqff] (2,8) circle (1.5pt);
\fill [color=qqqqff] (2,10) circle (1.5pt);
\fill [color=qqqqff] (4,2) circle (1.5pt);
\fill [color=qqqqff] (6,2) circle (1.5pt);
\fill [color=qqqqff] (8,2) circle (1.5pt);
\fill [color=qqqqff] (10,2) circle (1.5pt);
\fill [color=qqqqff] (4,4) circle (1.5pt);
\fill [color=qqqqff] (3,5) circle (1.5pt);
\fill [color=qqqqff] (3,6) circle (1.5pt);
\fill [color=qqqqff] (3,7) circle (1.5pt);
\fill [color=qqqqff] (3,8) circle (1.5pt);
\fill [color=qqqqff] (3,9) circle (1.5pt);
\fill [color=qqqqff] (3,10) circle (1.5pt);
\fill [color=qqqqff] (5,3) circle (1.5pt);
\fill [color=qqqqff] (6,3) circle (1.5pt);
\fill [color=qqqqff] (7,3) circle (1.5pt);
\fill [color=qqqqff] (8,3) circle (1.5pt);
\fill [color=qqqqff] (9,3) circle (1.5pt);
\fill [color=qqqqff] (10,3) circle (1.5pt);
\fill [color=qqqqff] (5,4) circle (1.5pt);
\fill [color=qqqqff] (6,4) circle (1.5pt);
\fill [color=qqqqff] (7,4) circle (1.5pt);
\fill [color=qqqqff] (8,4) circle (1.5pt);
\fill [color=qqqqff] (9,4) circle (1.5pt);
\fill [color=qqqqff] (10,4) circle (1.5pt);
\fill [color=qqqqff] (4,5) circle (1.5pt);
\fill [color=qqqqff] (5,5) circle (1.5pt);
\fill [color=qqqqff] (6,5) circle (1.5pt);
\fill [color=qqqqff] (7,5) circle (1.5pt);
\fill [color=qqqqff] (8,5) circle (1.5pt);
\fill [color=qqqqff] (9,5) circle (1.5pt);
\fill [color=qqqqff] (10,5) circle (1.5pt);
\fill [color=qqqqff] (4,6) circle (1.5pt);
\fill [color=qqqqff] (5,6) circle (1.5pt);
\fill [color=qqqqff] (6,6) circle (1.5pt);
\fill [color=qqqqff] (7,6) circle (1.5pt);
\fill [color=qqqqff] (8,6) circle (1.5pt);
\fill [color=qqqqff] (9,6) circle (1.5pt);
\fill [color=qqqqff] (10,6) circle (1.5pt);
\fill [color=qqqqff] (4,7) circle (1.5pt);
\fill [color=qqqqff] (5,7) circle (1.5pt);
\fill [color=qqqqff] (6,7) circle (1.5pt);
\fill [color=qqqqff] (7,7) circle (1.5pt);
\fill [color=qqqqff] (8,7) circle (1.5pt);
\fill [color=qqqqff] (9,7) circle (1.5pt);
\fill [color=qqqqff] (10,7) circle (1.5pt);
\fill [color=qqqqff] (4,8) circle (1.5pt);
\fill [color=qqqqff] (5,8) circle (1.5pt);
\fill [color=qqqqff] (6,8) circle (1.5pt);
\fill [color=qqqqff] (7,8) circle (1.5pt);
\fill [color=qqqqff] (8,8) circle (1.5pt);
\fill [color=qqqqff] (9,8) circle (1.5pt);
\fill [color=qqqqff] (10,8) circle (1.5pt);
\fill [color=qqqqff] (4,9) circle (1.5pt);
\fill [color=qqqqff] (5,9) circle (1.5pt);
\fill [color=qqqqff] (6,9) circle (1.5pt);
\fill [color=qqqqff] (7,9) circle (1.5pt);
\fill [color=qqqqff] (8,9) circle (1.5pt);
\fill [color=qqqqff] (9,9) circle (1.5pt);
\fill [color=qqqqff] (10,9) circle (1.5pt);
\fill [color=qqqqff] (4,10) circle (1.5pt);
\fill [color=qqqqff] (5,10) circle (1.5pt);
\fill [color=qqqqff] (6,10) circle (1.5pt);
\fill [color=qqqqff] (7,10) circle (1.5pt);
\fill [color=qqqqff] (8,10) circle (1.5pt);
\fill [color=qqqqff] (9,10) circle (1.5pt);
\fill [color=qqqqff] (10,10) circle (1.5pt);
\end{tikzpicture} \quad} & \multicolumn{1}{|c|}{\quad \begin{tikzpicture}[line cap=round,line join=round,>=triangle 45,x=0.25cm,y=0.25cm]
\draw[->,color=black] (-0.5,0) -- (10.6,0);
\foreach \x in {,1,2,3,4,5,6,7,8,9,10}
\draw[shift={(\x,0)},color=black] (0pt,2pt) -- (0pt,-2pt);
\draw[color=black] (10.3,0.08) node [anchor=south west] { x};
\draw[->,color=black] (0,-0.46) -- (0,10.48);
\foreach \y in {,1,2,3,4,5,6,7,8,9,10}
\draw[shift={(0,\y)},color=black] (2pt,0pt) -- (-2pt,0pt);
\draw[color=black] (0.1,10.06) node [anchor=west] { y};
\clip(-0.5,-0.46) rectangle (10.6,10.48);
\fill [color=qqqqff] (1,1) circle (1.5pt);
\fill [color=qqqqff] (2,1) circle (1.5pt);
\fill [color=qqqqff] (3,3) circle (1.5pt);
\fill [color=qqqqff] (2,3) circle (1.5pt);
\fill [color=qqqqff] (2,5) circle (1.5pt);
\fill [color=qqqqff] (2,7) circle (1.5pt);
\fill [color=qqqqff] (2,9) circle (1.5pt);
\fill [color=qqqqff] (3,2) circle (1.5pt);
\fill [color=qqqqff] (4,2) circle (1.5pt);
\fill [color=qqqqff] (7,2) circle (1.5pt);
\fill [color=qqqqff] (8.2,1.92) circle (1.5pt);
\fill [color=qqqqff] (4,4) circle (1.5pt);
\fill [color=qqqqff] (3,5) circle (1.5pt);
\fill [color=qqqqff] (3,6) circle (1.5pt);
\fill [color=qqqqff] (3,7) circle (1.5pt);
\fill [color=qqqqff] (3,8) circle (1.5pt);
\fill [color=qqqqff] (3,9) circle (1.5pt);
\fill [color=qqqqff] (3,10) circle (1.5pt);
\fill [color=qqqqff] (5,3) circle (1.5pt);
\fill [color=qqqqff] (6,3) circle (1.5pt);
\fill [color=qqqqff] (7,3) circle (1.5pt);
\fill [color=qqqqff] (8,3) circle (1.5pt);
\fill [color=qqqqff] (9,3) circle (1.5pt);
\fill [color=qqqqff] (10,3) circle (1.5pt);
\fill [color=qqqqff] (5,4) circle (1.5pt);
\fill [color=qqqqff] (6,4) circle (1.5pt);
\fill [color=qqqqff] (7,4) circle (1.5pt);
\fill [color=qqqqff] (8,4) circle (1.5pt);
\fill [color=qqqqff] (9,4) circle (1.5pt);
\fill [color=qqqqff] (10,4) circle (1.5pt);
\fill [color=qqqqff] (4,5) circle (1.5pt);
\fill [color=qqqqff] (5,5) circle (1.5pt);
\fill [color=qqqqff] (6,5) circle (1.5pt);
\fill [color=qqqqff] (7,5) circle (1.5pt);
\fill [color=qqqqff] (8,5) circle (1.5pt);
\fill [color=qqqqff] (9,5) circle (1.5pt);
\fill [color=qqqqff] (10,5) circle (1.5pt);
\fill [color=qqqqff] (4,6) circle (1.5pt);
\fill [color=qqqqff] (5,6) circle (1.5pt);
\fill [color=qqqqff] (6,6) circle (1.5pt);
\fill [color=qqqqff] (7,6) circle (1.5pt);
\fill [color=qqqqff] (8,6) circle (1.5pt);
\fill [color=qqqqff] (9,6) circle (1.5pt);
\fill [color=qqqqff] (10,6) circle (1.5pt);
\fill [color=qqqqff] (4,7) circle (1.5pt);
\fill [color=qqqqff] (5,7) circle (1.5pt);
\fill [color=qqqqff] (6,7) circle (1.5pt);
\fill [color=qqqqff] (7,7) circle (1.5pt);
\fill [color=qqqqff] (8,7) circle (1.5pt);
\fill [color=qqqqff] (9,7) circle (1.5pt);
\fill [color=qqqqff] (10,7) circle (1.5pt);
\fill [color=qqqqff] (4,8) circle (1.5pt);
\fill [color=qqqqff] (5,8) circle (1.5pt);
\fill [color=qqqqff] (6,8) circle (1.5pt);
\fill [color=qqqqff] (7,8) circle (1.5pt);
\fill [color=qqqqff] (8,8) circle (1.5pt);
\fill [color=qqqqff] (9,8) circle (1.5pt);
\fill [color=qqqqff] (10,8) circle (1.5pt);
\fill [color=qqqqff] (4,9) circle (1.5pt);
\fill [color=qqqqff] (5,9) circle (1.5pt);
\fill [color=qqqqff] (6,9) circle (1.5pt);
\fill [color=qqqqff] (7,9) circle (1.5pt);
\fill [color=qqqqff] (8,9) circle (1.5pt);
\fill [color=qqqqff] (9,9) circle (1.5pt);
\fill [color=qqqqff] (10,9) circle (1.5pt);
\fill [color=qqqqff] (4,10) circle (1.5pt);
\fill [color=qqqqff] (5,10) circle (1.5pt);
\fill [color=qqqqff] (6,10) circle (1.5pt);
\fill [color=qqqqff] (7,10) circle (1.5pt);
\fill [color=qqqqff] (8,10) circle (1.5pt);
\fill [color=qqqqff] (9,10) circle (1.5pt);
\fill [color=qqqqff] (10,10) circle (1.5pt);
\fill [color=qqqqff] (3,4) circle (1.5pt);
\end{tikzpicture} \quad} & \multicolumn{1}{|c|}{\quad \begin{tikzpicture}[line cap=round,line join=round,>=triangle 45,x=0.25cm,y=0.25cm]
\draw[->,color=black] (-0.5,0) -- (10.6,0);
\foreach \x in {,1,2,3,4,5,6,7,8,9,10}
\draw[shift={(\x,0)},color=black] (0pt,2pt) -- (0pt,-2pt);
\draw[color=black] (10.3,0.08) node [anchor=south west] { x};
\draw[->,color=black] (0,-0.46) -- (0,10.48);
\foreach \y in {,1,2,3,4,5,6,7,8,9,10}
\draw[shift={(0,\y)},color=black] (2pt,0pt) -- (-2pt,0pt);
\draw[color=black] (0.1,10.06) node [anchor=west] { y};
\clip(-0.5,-0.46) rectangle (10.6,10.48);
\fill [color=qqqqff] (1,1) circle (1.5pt);
\fill [color=qqqqff] (2,1) circle (1.5pt);
\fill [color=qqqqff] (3,3) circle (1.5pt);
\fill [color=qqqqff] (2,6) circle (1.5pt);
\fill [color=qqqqff] (2,5) circle (1.5pt);
\fill [color=qqqqff] (2,10) circle (1.5pt);
\fill [color=qqqqff] (2,9) circle (1.5pt);
\fill [color=qqqqff] (1,2) circle (1.5pt);
\fill [color=qqqqff] (2,2) circle (1.5pt);
\fill [color=qqqqff] (5,2) circle (1.5pt);
\fill [color=qqqqff] (6,2) circle (1.5pt);
\fill [color=qqqqff] (4,4) circle (1.5pt);
\fill [color=qqqqff] (3,5) circle (1.5pt);
\fill [color=qqqqff] (3,6) circle (1.5pt);
\fill [color=qqqqff] (3,7) circle (1.5pt);
\fill [color=qqqqff] (3,8) circle (1.5pt);
\fill [color=qqqqff] (3,9) circle (1.5pt);
\fill [color=qqqqff] (3,10) circle (1.5pt);
\fill [color=qqqqff] (5,3) circle (1.5pt);
\fill [color=qqqqff] (6,3) circle (1.5pt);
\fill [color=qqqqff] (7,3) circle (1.5pt);
\fill [color=qqqqff] (8,3) circle (1.5pt);
\fill [color=qqqqff] (9,3) circle (1.5pt);
\fill [color=qqqqff] (10,3) circle (1.5pt);
\fill [color=qqqqff] (5,4) circle (1.5pt);
\fill [color=qqqqff] (6,4) circle (1.5pt);
\fill [color=qqqqff] (7,4) circle (1.5pt);
\fill [color=qqqqff] (8,4) circle (1.5pt);
\fill [color=qqqqff] (9,4) circle (1.5pt);
\fill [color=qqqqff] (10,4) circle (1.5pt);
\fill [color=qqqqff] (4,5) circle (1.5pt);
\fill [color=qqqqff] (5,5) circle (1.5pt);
\fill [color=qqqqff] (6,5) circle (1.5pt);
\fill [color=qqqqff] (7,5) circle (1.5pt);
\fill [color=qqqqff] (8,5) circle (1.5pt);
\fill [color=qqqqff] (9,5) circle (1.5pt);
\fill [color=qqqqff] (10,5) circle (1.5pt);
\fill [color=qqqqff] (4,6) circle (1.5pt);
\fill [color=qqqqff] (5,6) circle (1.5pt);
\fill [color=qqqqff] (6,6) circle (1.5pt);
\fill [color=qqqqff] (7,6) circle (1.5pt);
\fill [color=qqqqff] (8,6) circle (1.5pt);
\fill [color=qqqqff] (9,6) circle (1.5pt);
\fill [color=qqqqff] (10,6) circle (1.5pt);
\fill [color=qqqqff] (4,7) circle (1.5pt);
\fill [color=qqqqff] (5,7) circle (1.5pt);
\fill [color=qqqqff] (6,7) circle (1.5pt);
\fill [color=qqqqff] (7,7) circle (1.5pt);
\fill [color=qqqqff] (8,7) circle (1.5pt);
\fill [color=qqqqff] (9,7) circle (1.5pt);
\fill [color=qqqqff] (10,7) circle (1.5pt);
\fill [color=qqqqff] (4,8) circle (1.5pt);
\fill [color=qqqqff] (5,8) circle (1.5pt);
\fill [color=qqqqff] (6,8) circle (1.5pt);
\fill [color=qqqqff] (7,8) circle (1.5pt);
\fill [color=qqqqff] (8,8) circle (1.5pt);
\fill [color=qqqqff] (9,8) circle (1.5pt);
\fill [color=qqqqff] (10,8) circle (1.5pt);
\fill [color=qqqqff] (4,9) circle (1.5pt);
\fill [color=qqqqff] (5,9) circle (1.5pt);
\fill [color=qqqqff] (6,9) circle (1.5pt);
\fill [color=qqqqff] (7,9) circle (1.5pt);
\fill [color=qqqqff] (8,9) circle (1.5pt);
\fill [color=qqqqff] (9,9) circle (1.5pt);
\fill [color=qqqqff] (10,9) circle (1.5pt);
\fill [color=qqqqff] (4,10) circle (1.5pt);
\fill [color=qqqqff] (5,10) circle (1.5pt);
\fill [color=qqqqff] (6,10) circle (1.5pt);
\fill [color=qqqqff] (7,10) circle (1.5pt);
\fill [color=qqqqff] (8,10) circle (1.5pt);
\fill [color=qqqqff] (9,10) circle (1.5pt);
\fill [color=qqqqff] (10,10) circle (1.5pt);
\fill [color=qqqqff] (3,4) circle (1.5pt);
\fill [color=qqqqff] (9,2) circle (1.5pt);
\fill [color=qqqqff] (10,2) circle (1.5pt);
\fill [color=qqqqff] (4,3) circle (1.5pt);
\end{tikzpicture} \quad} \\[5pt]
\hline
\multicolumn{3}{|p{12cm}|}{For $i$ even, $j$ odd the set $FCS (i,j)$ is obtained from $FCS (j,i)$ by the reflection with respect to the line $x=y$.}\\[2pt]
\hline
\end{tabular}
\end{center}
\end{table}

\begin{corollary} \label{cor1}
Assume that $1<p<m-2$. Then the group
$E^m_{\mathrm{B}}(\sqcup_{k=1}^2 S^{p})$
is infinite if and only if $(m-3)/(m-p-2)$ is an integer, which is distinct from $5$ for $m-p$ odd, and distinct from $3,5,7$ for $m-p$ even.
%
\end{corollary}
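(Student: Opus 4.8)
The plan is to specialize Theorem~\ref{thm-1b} to the case $p_1=p_2=p$ and then to carry out the resulting finite combinatorial check. Since $p_1=p_2=p$ we have $i=j=m-p$, so the relevant finiteness-checking set is $FCS(m-p,m-p)$; by Table~\ref{formal} this is the set in the first column (when $m-p$ is even) or in the third column (when $m-p$ is odd), and in both cases the defining conditions are invariant under $(x,y)\mapsto(y,x)$, so the set is symmetric about the line $x=y$. The defining line $(m-p-2)x_1+(m-p-2)x_2=m-3$ degenerates to $(m-p-2)(x_1+x_2)=m-3$. Writing $d=m-p-2>0$ and $s=x_1+x_2$, a solution in positive integers $x_1,x_2$ exists precisely when $s=(m-3)/d$ is an integer and the antidiagonal segment $\{(x,s-x):0<x<s\}$ meets $FCS(m-p,m-p)$. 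Thus the first, necessary, condition is exactly that $(m-3)/(m-p-2)$ be a positive integer. I would also record the elementary remark that the hypothesis $p>1$ forces $d=m-p-2<m-3$, hence $s>1$ and so $s\ge 2$ whenever $s$ is an integer; in particular the degenerate value $s=1$ (which admits no positive solution) never occurs.

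It then remains to determine, for each fixed parity of $m-p$, the set of integers $s\ge 2$ for which the antidiagonal $x+y=s$ contains a point of $FCS(m-p,m-p)$. I would first dispose of all large $s$ at once. In the even case the condition ``$x\ge 4$ and $y\ge 4$'' shows that $(4,s-4)\in FCS(m-p,m-p)$ whenever $s\ge 8$; in the odd case the condition ``$x\ge 3$ and $y\ge 3$'' shows that $(3,s-3)\in FCS(m-p,m-p)$ whenever $s\ge 6$. Hence every sufficiently large $s$ yields an infinite group, and only finitely many values of $s$ remain to be examined.

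The core of the argument is then the finite check of the remaining small diagonals against the explicit conditions of Table~\ref{formal}. For $m-p$ even one verifies directly that $s=2$ (via $(1,1)$), $s=4$ (via $(2,2)$) and $s=6$ (via $(2,4)$) meet the set, while the three antidiagonals $s=3,5,7$ contain no point of the set, checking each of the finitely many candidate points against the seven listed conditions and using the symmetry to halve the work. For $m-p$ odd one checks that $s=2$ (via $(1,1)$), $s=3$ (via $(2,1)$) and $s=4$ (via $(2,2)$) succeed, whereas $s=5$ fails on all of $(1,4),(2,3),(3,2),(4,1)$. Combining with the previous paragraph, the group is infinite exactly when $s=(m-3)/(m-p-2)$ is an integer and $s\notin\{3,5,7\}$ (for $m-p$ even) or $s\ne 5$ (for $m-p$ odd), which is the statement of the corollary. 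The only point demanding care — and there is no genuine obstacle beyond it — is to read the periodic conditions of Table~\ref{formal} correctly and to exploit the $(x,y)\mapsto(y,x)$ symmetry so that the case-by-case verification stays short and transparent.
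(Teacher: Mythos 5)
Your proposal is correct and follows essentially the same route as the paper, which obtains Corollary~\ref{cor1} directly by specializing Theorem~\ref{thm-1b} to $p_1=p_2=p$ (so the line degenerates to $(m-p-2)(x_1+x_2)=m-3$) and checking which antidiagonals $x+y=s$ meet $FCS(m-p,m-p)$; your case-by-case verification of the small diagonals and the large-$s$ argument are accurate.
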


\begin{example} \label{rem-E558}
Applying Corollary~\ref{cor1} and Theorem~\ref{thm-1a} we see that the group $E_{\rm U}^8(\sqcup_{k=1}^r S^5)$ is infinite if and only if $r \geq 3$.  This corrects an error 
in~\cite[Corollary~3.18]{Schmitt-02} where it is stated that the group $L_b := E_{\rm U}^8(\sqcup_{k=1}^b S^5)$ is infinite if and only if $b \geq 2$; 
see Remark \ref{rem:Hil-Mil} for an explanation.  Note that~\cite[Corollary~3.19]{Schmitt-02} should also be modified by changing $b \geq 2$ to $b \geq 3$.
\end{example}

Combining Theorems~\ref{th1}--\ref{thm-1b}
we obtain the following definitive finiteness criteria for $E^m(\sqcup_{k=1}^r S^{p_k})$.

\begin{corollary} \label{cor-globalfinite}
Assume that $p_1,\dots,p_r<m-2$. Then the group $E^m(\sqcup_{k=1}^r S^{p_k})$ is infinite if and only if there is a subsequence $(k_1,\dots,k_s)\subset (1,\dots,r)$ satisfying one of the following conditions:
\begin{itemize}
\item
$s=1$, $4\,|\,p_{k_1}+1$, and $m<3p_{k_1}/2 + 2$;
\item
$s=2$ and there is $(x_1, x_2)\in FCS (m-p_{k_1}, m - p_{k_2})$ such that $(m- p_{k_1} - 2) x_1 + (m - p_{k_2} - 2)x_2 = m-3$;
\item
$s\ge3$ and the equation $(m - p_{k_1} - 2)x_1 + \dots + (m - p_{k_s} - 2)x_s = m-3$ has a solution in positive integers.
\end{itemize}
\end{corollary}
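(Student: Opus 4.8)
The plan is to reduce the finiteness question for the full link group to the finiteness question for its Brunnian summands, and then to apply the three criteria already in hand. First I would invoke the second isomorphism of Theorem~\ref{th-structure}, which expresses
\[ E^m(\sqcup_{k=1}^r S^{p_k}) \,\cong\, \bigoplus_{S\subset\{1,\dots,r\},\,S\ne\emptyset} E^m_{\mathrm{B}}(\sqcup_{k\in S} S^{p_k}). \]
Since each summand is a finitely generated abelian group, this finite direct sum is infinite if and only if at least one summand is infinite. A nonempty subset $S$ is the same datum as a subsequence $(k_1,\dots,k_s)\subset(1,\dots,r)$, so $E^m(\sqcup_{k=1}^r S^{p_k})$ is infinite exactly when there is a subsequence for which $E^m_{\mathrm{B}}(\sqcup_{j=1}^s S^{p_{k_j}})$ is infinite.

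Next I would split into cases according to the length $s$ of the subsequence and apply the matching Brunnian criterion. For $s\ge3$, Theorem~\ref{thm-1a} states that $E^m_{\mathrm{B}}(\sqcup_{j=1}^s S^{p_{k_j}})$ is infinite if and only if the equation $(m-p_{k_1}-2)x_1+\dots+(m-p_{k_s}-2)x_s=m-3$ has a solution in positive integers, which is precisely the third condition. For $s=2$, Theorem~\ref{thm-1b} gives infiniteness if and only if some $(x_1,x_2)\in FCS(m-p_{k_1},m-p_{k_2})$ satisfies $(m-p_{k_1}-2)x_1+(m-p_{k_2}-2)x_2=m-3$, which is the second condition verbatim. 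For $s=1$, a one-component link is vacuously Brunnian, so $E^m_{\mathrm{B}}(S^{p_{k_1}})=E^m(S^{p_{k_1}})$ and Theorem~\ref{th1} applies.

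The only point requiring verification is the translation of the knot criterion into the stated form. Theorem~\ref{th1} declares $E^m(S^{p_{k_1}})$ infinite if and only if $4\mid p_{k_1}+1$ and $p_{k_1}>\frac{2}{3}(m-2)$; the latter inequality is equivalent to $\frac{3}{2}p_{k_1}>m-2$, i.e.\ $m<3p_{k_1}/2+2$, which is exactly the first condition. Assembling the three cases yields the claimed equivalence.

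I do not expect a genuine obstacle here: the mathematical content of the corollary is carried entirely by Theorems~\ref{th1}--\ref{thm-1b} together with the structural splitting of Theorem~\ref{th-structure}, and what remains is the routine observation that a finite direct sum of finitely generated abelian groups is infinite precisely when one of its summands is. The mild care needed is bookkeeping: ensuring that ``subsequence'' and ``nonempty subset'' are interchangeable for the purpose of the criterion, and that the knot inequality is rewritten correctly.
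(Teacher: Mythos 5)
Your proposal is correct and matches the paper's own (one-line) proof, which simply combines Theorems~\ref{th1}--\ref{thm-1b}: the decomposition of Theorem~\ref{th-structure} into Brunnian summands indexed by nonempty subsets, the observation that a finite direct sum of finitely generated abelian groups is infinite iff some summand is, and the case analysis on $s$ with the correct rewriting of $p>\tfrac{2}{3}(m-2)$ as $m<3p/2+2$. Nothing is missing.
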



%
%
%




\subsection{Formulae for the rank of the group of links} \label{ssec-ranks}



In this subsection we state results on ranks of the groups of links. We give an explicit formula for the ranks of the groups $E^m_\mathrm{B}(\sqcup_{k=1}^r S^{p_k})$ and $E^m_{}(\sqcup_{k=1}^r S^{p_k})$ in Theorem~\ref{thm-general} below.
The formula for Brunnian links asserts that the rank is equal to the number of solutions
of the equation from Theorem~\ref{thm-1a}
counted with certain ``multiplicities'' $m(x_1,\dots,x_r)$.
As a corollary to Theorem \ref{thm-general}, we obtain a formula for the rank of the group of links with components having the same dimension (Theorem~\ref{th2}).  Some computations are shown in Table~\ref{tab-ranks} below.  There is a computer application available based on Theorem \ref{thm-general} which computes these ranks in general~\cite{Sko11D}.


To state our results we need the following notation.
Denote the rank of a finitely generated abelian group $G$
by $\rank G$ and let $\mathbb{N}$ denote the set of positive integers. 
The \emph{M\"obius function} is defined by the formula:
\[
\mu(i) = \left\{
\begin{array}{cl}
1, & \text{if } i = 1; \\
(-1)^k, & \text{if } i=q_1\, q_2 \dots q_k \text{~is a product of distinct primes;} \\
0, & \text{if $i$ is not square free.}
\end{array}
\right.
\]
Denote by $\gcd(x_1,\dots,x_r)$ the greatest common divisor of integers $x_1$, \dots, $x_r$.
Denote:
\begin{align}
g(x_1,\dots,x_r)&=(m-p_1-2)x_1+\dots+(m-p_r-2)x_r,\label{eq2}\\
d(x_1,\dots,x_r)&=\frac{(-1)^{g(x_1,\dots,x_r)}}{x_1+\dots+x_r}\sum_{i\,|\,\gcd(x_1,\dots,x_k)} \mu(i)(-1)^{{g(x_1,\dots,x_r)}/{i}}\cdot\frac{\left((x_1+\dots+x_r)/i\right)!}{(x_1/i)!\cdots (x_r/i)!}, \label{eq3}\\
m(x_1,\dots,x_r)&=d(x_1-1,x_2,\dots,x_r)+
\dots+d(x_1,x_2,\dots,x_r-1)-d(x_1,x_2\dots,x_r)\label{eq4}.
\end{align}
Set $d(0,\dots,0)=1$ and $d(x_1,\dots,x_r)=0$, if at least one of the numbers $x_1,\dots,x_k$ is negative.
For each positive integer $t$ define
the following polynomials in the indeterminate $r$:
\begin{equation}\label{eq5}
w_{t}(r) = \frac{1}{t}\sum_{i|t}\mu(i)r^{t/i}
\qquad\text{and}\qquad
w_{t,s}(r) =
\begin{cases}
w_t(r)+w_{t/2}(r), &\text{if $s$ is odd and $t=2\!\!\mod4$;}\\
w_t(r), &\text{otherwise.}
\end{cases}
\end{equation}
Set $w_{t,s}(r)=0$, if $t\not\in\mathbb{N}$.
Set
$$
c_{p,m}=\begin{cases}
1, &\text{if }4\,|\, p+1\text{ and }m<3p/2+2;\\
0, &\text{otherwise.}
\end{cases}
$$
Set $\delta_{i,j}=1$, if $i=j$, and $\delta_{i,j}=0$, otherwise.


\begin{theorem}\label{thm-general}
Assume that $r>1$ and $p_1,\dots,p_r< m-2$.
Then 
\begin{equation}\label{eq1}
\rank E^m_{\mathrm{B}}(\sqcup_{k=1}^r S^{p_k})=
\sum_{
x_1,\dots,x_r\in \mathbb{N}\,:\, g(x_1,\dots,x_r)=m-3
}
m(x_1,\dots,x_r).
\end{equation}
\begin{equation}\label{eq1b}
\rank E^m_{\mathrm{}}(\sqcup_{k=1}^r S^{p_k})= \sum_{
x_1,\dots,x_r\in \mathbb{N}\cup\{0\}\,:\, g(x_1,\dots,x_r)=m-3
}
m(x_1,\dots,x_r)+\sum_{k=1}^r
\left(c_{p_k,m}-\delta_{2(m-3)/(m-p_k-2),5-(-1)^{m-p_k}}\right)\!.
\end{equation}
\end{theorem}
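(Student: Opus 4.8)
The plan is to rationalize everything and to recognize the resulting short exact sequence of Lemma~\ref{lem:Hae-splits} as the degree-$(m-3)$ part of a two-term complex of free graded Lie algebras, whose homology I compute by a (super) Witt formula. Since Lemma~\ref{lem:Hae-splits} tells us that $w\tensor\mathrm{Id}_\Q$ is onto, the whole computation reduces to
$$\rank E^m_{\mathrm{U}}(\sqcup_{k=1}^r S^{p_k})=\dim_\Q\bigl(\Lambda_{(p)}^{(q)}\tensor\Q\bigr)-\dim_\Q\bigl(\Pi_{m-2}^{(q)}\tensor\Q\bigr),$$
and I must understand the two right-hand dimensions multidegree by multidegree.

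First I would unwind the definitions of $\Lambda_{(p)}^{(q)}$, $\Pi_{m-2}^{(q)}$ and $w$ from Section~\ref{ssec-Hae}. Setting $q_k=m-p_k-1$, these groups are assembled from the homotopy groups $\pi_{p_k}$ and $\pi_{m-2}$ of wedges of the spheres $S^{q_k}$, and $w$ is built from Whitehead products with the inclusion classes $\iota_k$. The crucial input is then rational homotopy theory: by the Hilton--Milnor theorem the group $\pi_*\bigl(\bigvee_{k=1}^r S^{q_k}\bigr)\tensor\Q$ is the free graded Lie algebra $L=L(\xi_1,\dots,\xi_r)$ on generators $\xi_k$ of degree $q_k-1$, with the Whitehead product corresponding to the Lie bracket, with $\pi_n\bigl(\bigvee_k S^{q_k}\bigr)\tensor\Q=L_{n-1}$, and with Whitehead product by $\iota_k$ becoming the operator $\alpha\mapsto[\alpha,\xi_k]$. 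Grading $L$ by the $r$-tuple $(x_1,\dots,x_r)$ of exponents of the generators, the degree condition matches $g(x_1,\dots,x_r)=m-3$, and the sequence of Lemma~\ref{lem:Hae-splits} becomes, in each multidegree $x=(x_1,\dots,x_r)$ with $g(x)=m-3$, the complex
$$0\to\kernel\Phi_x\to\bigoplus_{k=1}^r L_{x-e_k}\stackrel{\Phi_x}{\longrightarrow}L_x\to0,\qquad\Phi_x(\alpha_1,\dots,\alpha_r)=\sum_{k=1}^r[\alpha_k,\xi_k],$$
where $e_k$ is the $k$-th standard basis vector; here $\Phi_x$ is onto because every bracket of positive length is right-normed, i.e.\ of the form $[\alpha,\xi_k]$, which also re-proves the surjectivity asserted in Lemma~\ref{lem:Hae-splits}. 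Moreover the multigrading refines the Hilton--Milnor splitting so as to match the connected-sum decomposition of Theorem~\ref{th-structure}: the \emph{fully mixed} multidegrees, those with every $x_k\ge1$, assemble to $E^m_{\mathrm{B}}(\sqcup_{k=1}^r S^{p_k})\tensor\Q$, while the multidegrees supported on a proper subset $S$ give the Brunnian group of the corresponding sub-link.

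It then remains to compute $\dim_\Q L_x$, the dimension of a homogeneous multidegree component of a free graded Lie algebra whose generators may have either parity. I would establish that this dimension equals $d(x_1,\dots,x_r)$ as defined in~\eqref{eq3}; this is the super analogue of the multigraded Witt formula, and the outer sign $(-1)^{g(x)}$ together with the inner signs $(-1)^{g(x)/i}$ record exactly the contributions of the odd generators (those $\xi_k$ with $q_k-1$ odd), including their nonzero divided squares $[\xi_k,\xi_k]$. Granting this, $\dim_\Q\kernel\Phi_x=\sum_{k=1}^r d(x-e_k)-d(x)=m(x_1,\dots,x_r)$, and summing over the fully mixed solutions of $g=m-3$ yields the Brunnian formula~\eqref{eq1}.

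For the full group I would invoke Theorem~\ref{th-structure} to write $\rank E^m(\sqcup_{k=1}^r S^{p_k})=\sum_{S\neq\emptyset}\rank E^m_{\mathrm{B}}(\sqcup_{k\in S}S^{p_k})$. Extending the sum in~\eqref{eq1} to all multidegrees with $x_k\ge0$ automatically collects the contributions of all subsets $S$ of size $\ge2$; the one-element subsets are knot groups, whose genuine ranks are supplied by Theorem~\ref{th1}, namely $c_{p_k,m}$. A direct evaluation of the free graded Lie algebra on a single generator shows that the spurious one-component term already present in $\sum_{x\ge0}m(x)$ equals $\delta_{2(m-3)/(m-p_k-2),\,5-(-1)^{m-p_k}}$, so subtracting it and adding back $c_{p_k,m}$ produces~\eqref{eq1b}. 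I expect the main obstacle to be twofold and purely algebraic: proving the super multigraded Witt formula~\eqref{eq3} with the correct sign bookkeeping from the odd generators and their divided squares, and pinning down precisely that Haefliger's topological maps $\lambda$ and $w$ rationalize to the Lie-algebraic complex above with the multigrading compatible with Theorem~\ref{th-structure}; once these are in place the surjectivity of $w\tensor\mathrm{Id}_\Q$, the single-component corrections, and the passage from $E^m_{\mathrm{U}}$ to $E^m$ are routine.
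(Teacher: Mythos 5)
Your proposal follows essentially the same route as the paper: rationalize the Haefliger sequence, identify the rational homotopy of the wedge with a free graded Lie superalgebra via Hilton--Milnor, prove surjectivity of the bracket map $w$ by rewriting brackets in right-normed form (the paper's Claim~\ref{cla5}), compute the kernel multidegree-by-multidegree via the super Witt formula (which the paper cites from Petrogradsky rather than reproving), and correct for the one-component terms using Theorems~\ref{th-structure} and~\ref{th1} together with the identity $m(t,0,\dots,0)=\delta_{2t,5-(-1)^{m-p}}$. The only organizational difference is that you work with the sequence for $E^m_{\mathrm{U}}$ and extract the Brunnian pieces by multigrading, whereas the paper works directly with the Brunnian sequence~\eqref{seq:PHae}; the mathematical content is identical.
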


It is not obvious when the sum~(\ref{eq1}) equals zero; see Section~\ref{ssec-fin-proofs}.

\begin{theorem}\label{th2}
Assume that $1<p<m-2$. Denote $t=(m-3)/(m-p-2)$.
Then
\begin{equation*}
\mathrm{rk} E^m(\sqcup_{k=1}^r S^p)=
r\cdot \left(w_{t-1,m-p}(r)+c_{p,m}
-\delta_{2t,5-(-1)^{m-p}}\right)-w_{t,m-p}(r).
\end{equation*}
\end{theorem}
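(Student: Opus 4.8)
The plan is to deduce Theorem~\ref{th2} directly from Theorem~\ref{thm-general} by specializing formula~\eqref{eq1b} to the case $p_1=\dots=p_r=p$ and then evaluating the resulting combinatorial sum in closed form. Write $t=(m-3)/(m-p-2)$ and $s=m-p$. When all $p_k=p$ the function $g$ from~\eqref{eq2} depends only on $n:=x_1+\dots+x_r$, namely $g(x_1,\dots,x_r)=(m-p-2)n$, so the constraint $g=m-3$ becomes $x_1+\dots+x_r=t$; in particular the first sum in~\eqref{eq1b} is empty unless $t\in\mathbb{N}$. The correction term is immediate: all summands coincide, so $\sum_{k=1}^r\bigl(c_{p_k,m}-\delta_{2(m-3)/(m-p_k-2),\,5-(-1)^{m-p_k}}\bigr)=r\bigl(c_{p,m}-\delta_{2t,\,5-(-1)^{s}}\bigr)$, which accounts for the corresponding terms in the statement. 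It remains to evaluate the first sum.

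First I would unfold the definition~\eqref{eq4} of $m(x_1,\dots,x_r)$ and reorganize. Writing $e_j$ for the $j$-th standard basis vector and $D_n:=\sum_{x_1+\dots+x_r=n,\ x_k\ge0}d(x_1,\dots,x_r)$, the telescoping shape of~\eqref{eq4} together with the re-indexing $x\mapsto x-e_j$ (a bijection of the relevant index sets, the boundary terms with a negative entry vanishing by the convention $d(\dots)=0$) gives
\[
\sum_{\substack{x_1,\dots,x_r\ge0\\ x_1+\dots+x_r=t}} m(x_1,\dots,x_r)=r\,D_{t-1}-D_{t}.
\]
Since $1<p$ forces $t>1$, we have $t-1\ge1$ whenever $t\in\mathbb{N}$, so no degenerate term $D_0$ occurs; and when $t\notin\mathbb{N}$ both sides vanish by the convention $w_{t,s}(r)=0$. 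Thus the theorem reduces to the identity $D_n=w_{n,s}(r)$ for every $n\in\mathbb{N}$.

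To prove $D_n=w_{n,s}(r)$ I would substitute the definition~\eqref{eq3} of $d$, using that $g$ equals $(m-p-2)n$ on the entire index set, so that $(-1)^{g}=(-1)^{sn}$ and $(-1)^{g/i}=(-1)^{s(n/i)}$ for $i\mid n$. Interchanging the two summations turns the inner sum over $i\mid\gcd(x_1,\dots,x_r)$ into an outer sum over $i\mid n$, and for each fixed $i$ the substitution $x_k=i\,y_k$ together with the multinomial theorem gives $\sum_{y_1+\dots+y_r=n/i}\binom{n/i}{y_1,\dots,y_r}=r^{n/i}$. This yields the super-Witt expression
\[
D_n=\frac{(-1)^{sn}}{n}\sum_{i\mid n}\mu(i)\,(-1)^{s(n/i)}\,r^{n/i}.
\]
Comparing with~\eqref{eq5} is then a short parity computation: when $s$ is even all signs are $+1$ and $D_n=w_n(r)$; when $s$ is odd one checks, according to the $2$-adic valuation of $n$, that the signs collapse to $w_n(r)$ unless $n\equiv2\pmod4$, in which case separating odd from even divisors $i$ and using $\mu(2j)=-\mu(j)$ produces exactly the extra term $w_{n/2}(r)$, matching the definition of $w_{n,s}(r)$.

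The main obstacle is this last parity computation in the odd case: the sign $(-1)^{s(n/i)}$ must interact with the M\"obius function precisely so as to reproduce the correction $w_t+w_{t/2}$ that distinguishes $w_{t,s}$ from $w_t$ when $t\equiv2\pmod4$. This is the combinatorial shadow of the fact that $d(x_1,\dots,x_r)$ is the (signed) dimension of a multigraded component of a free graded Lie algebra whose generators have parities $m-p_k$, and $D_n$ is the total dimension in degree $n$, i.e.\ the super-analogue of the classical Witt number; keeping the signs straight under the interchange of summation and the multinomial collapse is where the care is required.
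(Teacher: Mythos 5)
Your proposal is correct, and it is essentially the derivation that the paper acknowledges but deliberately does not write out: the authors remark that ``Theorem~\ref{th2} can be obtained from Theorem~\ref{thm-general} but we give a short alternative proof.'' You take the first route, specializing~\eqref{eq1b} to $p_1=\dots=p_r=p$, telescoping $\sum_{|x|=t}m(x)$ to $rD_{t-1}-D_t$ via the convention $d=0$ on negative multi-indices, and then proving $D_n=w_{n,m-p}(r)$ by interchanging summation in~\eqref{eq3} and applying the multinomial theorem; your parity analysis in the odd case (splitting divisors $i$ of $n\equiv 2\bmod 4$ into odd and even, using $\mu(2j)=-\mu(j)$, and noting that the even-divisor part of $w_n(r)$ contributes $-\tfrac12 w_{n/2}(r)$ so that the two halves assemble to $w_n(r)+w_{n/2}(r)$) is exactly what is needed to match~\eqref{eq5}. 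The paper instead decomposes the free Lie superalgebra as $L_g\cong\oplus_S L^0_g(S)$ over subsets $S\subset\{1,\dots,r\}$, sums the Brunnian ranks $\sum_{k\in S}\dim L^k_{p_k-1}(S)-\dim L^0_{m-3}(S)$ over all nonempty $S$ so that the answer collapses to $\sum_k(\dim L_{p-1}-\dim L_{p-1}(\{k\})+\dim L_{m-3}(\{k\})+c_{p,m})-\dim L_{m-3}$, and then quotes Witt's formula $\dim L_{ts}=w_{t,s}(r)$ from the literature as a black box. The trade-off is that your argument is self-contained modulo Theorem~\ref{thm-general} (in effect re-proving the super-Witt formula from the multigraded dimension formula), while the paper's argument is shorter because it never touches the multiplicities $m(x_1,\dots,x_r)$ and lets the single-generator terms $\dim L_{m-3}(\{k\})-\dim L_{p-1}(\{k\})$ account for the correction $\delta_{2t,5-(-1)^{m-p}}=w_{t-1,m-p}(1)-w_{t,m-p}(1)$ that you obtain from the boundary terms $m(t,0,\dots,0)$ already absorbed into~\eqref{eq1b}.
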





\begin{corollary}\label{cor-easy}
The rank of the group $E_{\mathrm{B}}^m(S^{m-3}\sqcup S^{m-3})$ tends to infinity as $m$ tends to infinity.
\end{corollary}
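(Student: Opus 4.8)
The plan is to specialize the explicit rank formula of Theorem~\ref{th2} to this family and then extract the leading asymptotics. First I would reduce the claim about Brunnian links to a claim about the full group of links. By Theorem~\ref{th-structure}, applied with $r=2$ (where $E^m_{\mathrm{U}}=E^m_{\mathrm{B}}$), there is an isomorphism
\[ E^m(\sqcup_{k=1}^2 S^{m-3}) \,\cong\, E^m_{\mathrm{B}}(\sqcup_{k=1}^2 S^{m-3}) \oplus E^m(S^{m-3}) \oplus E^m(S^{m-3}), \]
and each knot summand has rank at most $1$ by Theorem~\ref{th1}. Hence $\rank E^m_{\mathrm{B}}(\sqcup_{k=1}^2 S^{m-3}) \ge \rank E^m(\sqcup_{k=1}^2 S^{m-3}) - 2$, so it suffices to show that the rank of the full group tends to infinity.

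Next I would invoke Theorem~\ref{th2} with $p=m-3$ and $r=2$. Here $m-p-2=1$, so $t=(m-3)/(m-p-2)=m-3$, while $m-p=3$ is odd, giving $5-(-1)^{m-p}=6$; thus the Kronecker symbol $\delta_{2t,6}$ vanishes for all $m>6$. For such $m$ the formula collapses to
\[ \rank E^m(\sqcup_{k=1}^2 S^{m-3}) = 2\,w_{t-1,3}(2) - w_{t,3}(2) + 2\,c_{m-3,m}, \]
with $c_{m-3,m}\in\{0,1\}$. Since $s=3$ is odd, $w_{t,3}(2)$ agrees with the necklace polynomial $w_t(2)=\frac1t\sum_{i\mid t}\mu(i)2^{t/i}$ except for an added summand $w_{t/2}(2)$ when $t\equiv2\pmod4$, and likewise for $w_{t-1,3}(2)$; both corrections are of size $O(2^{t/2})$.

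Then I would estimate the necklace polynomial. Isolating the principal divisor $i=1$ yields $w_t(2)=2^t/t+O(2^{t/2})$, the error accounting for the at most $t$ remaining divisors, each contributing a term bounded by $2^{t/2}$. Substituting gives $2\,w_{t-1}(2)-w_t(2)=\frac{2^t}{t-1}-\frac{2^t}{t}+O(2^{t/2})=\frac{2^t}{t(t-1)}+O(2^{t/2})$. Because $2^{t/2}=o(2^t/t^2)$, this expression tends to infinity; absorbing the nonnegative term $2\,c_{m-3,m}$ and the lower-order $w_{t/2}$-corrections does not change this. Thus $\rank E^m(\sqcup_{k=1}^2 S^{m-3})\to\infty$, and by the reduction above $\rank E^m_{\mathrm{B}}(\sqcup_{k=1}^2 S^{m-3})\to\infty$ as well.

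The main obstacle is the cancellation at leading order: both $2\,w_{t-1}(2)$ and $w_t(2)$ carry the same leading term of size $2^t/t$, so their difference is an order of magnitude smaller, of size $2^t/t^2$. The delicate point is therefore to expand the necklace polynomial to enough precision to confirm that this difference is positive and still grows exponentially, while verifying that the genuinely smaller contributions — the non-principal divisor terms of size $O(2^{t/2})$ together with the $w_{t/2}(2)$ corrections built into $w_{t,3}$ — remain negligible against it.
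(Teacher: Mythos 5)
Your proof is correct and follows essentially the same route as the paper: specialize Theorem~\ref{th2} to $p=m-3$, $r=2$, $t=m-3$, isolate the $i=1$ term of the necklace polynomial, and observe that $2w_{t-1}(2)-w_t(2)=\frac{2^t}{t(t-1)}+O(2^{t/2})\to\infty$, with the $w_{t/2}$, $c_{p,m}$ and $\delta$ corrections being negligible or bounded. The only cosmetic difference is that you make explicit the (two-rank) discrepancy between $E^m$ and $E^m_{\mathrm{B}}$ via Theorem~\ref{th-structure}, which the paper leaves implicit.
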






\begin{table}[ht]
\caption{The ranks of the groups $E_{\mathrm{B}}^{p+l+k}(S^p\sqcup S^{p+k})$ for $p\le 5$.
}
\label{tab-ranks}
\begin{center}
\begin{tabular}{|l|c|c|cc|ccc|cccc|ccccc|}
\hline
\multicolumn{2}{|l|}{$p$} & 1 &
\multicolumn{2}{|c|}{2} &
\multicolumn{3}{|c|}{3} &
\multicolumn{4}{|c|}{4} &
\multicolumn{5}{|c|}{5}\\
\hline
\multicolumn{2}{|l|}{$l$} & $\ge 3$ &
3 & \multicolumn{1}{|l|}{$\ge 4$} &
3 & 4 & \multicolumn{1}{|l|}{$\ge 5$} &
3 & 4 & 5 & \multicolumn{1}{|l|}{$\ge 6$} &
3 & 4 & 5 & 6 & \multicolumn{1}{|l|}{$\ge 7$}\\
\hline
    & 0 &
0 & 1 & 0 & 2 & 1 & 0 & 1 & 0 & 1 & 0 & 0 & 0 & 0 & 1 & 0 \\
$k$ & 1 &
0 & 1 & 0 & 1 & 1 & 0 & 0 & 0 & 1 & 0 & 0 & 0 & 0 & 1 & 0 \\
    & 2 &
0 & 1 & 0 & 1 & 1 & 0 & 0 & 0 & 1 & 0 & 1 & 0 & 0 & 1 & 0 \\
\hhline{|~-|~|~~|~~~|~~~~|~~~~~|}
    & $\ge3$ &
0 & 1 & 0 & 1 & 1 & 0 & 0 & 0 & 1 & 0 & 0 & 0 & 0 & 1 & 0 \\
\hline
\end{tabular}
%
\end{center}
\end{table}


\subsection{Framed links}\label{ssec-framed}

Let $l_1,\dots,l_r$ be integers such that $0\le l_k\le m-p_k$ for each $k=1,\dots,r$.
Denote by
\[
E^m(\sqcup_{k=1}^r S^{p_k} \times D^{l_k}):=
\{\, f \colon \sqcup_{k=1}^r S^{p_k} \times D^{l_k} \hookrightarrow S^m\,\}/\text{isotopy}
\]
the set of smooth isotopy classes of smooth \emph{orientation preserving} embeddings $\sqcup_{k=1}^r S^{p_k} \times D^{l_k} \hookrightarrow S^m$ (for $l_k\ne m-p_k$ any
embedding $S^{p_k} \times D^{l_k}\hookrightarrow S^m$ is considered to be orientation preserving). For $p_1,\dots,p_r<m-2$ the set $E^m(\sqcup_{k=1}^r S^{p_k} \times D^{l_k})$ is a group
called the group of \emph{partially} \emph{framed links}. This notion generalizes both links ($l_k=0$) and \emph{framed links} ($l_k=m-p_k$) \cite{Hae66A}. Partially framed links play an important role in the classification of embeddings of general manifolds \cite{CRS08,Sko08,Sko11}.

To give a formula for the rank of the group of partially framed links we first recall that the \emph{Stiefel manifold} $V_{q,l}$ is the manifold of all $l$-tuples of pairwise orthogonal unit vectors in $\mathbb{R}^q$; by definition $V_{q, 0}$ is a point.  The ranks of the homotopy groups $V_{q, l}$ are essentially known; see Section~\ref{ssec-framed-proofs}:

\begin{lemma}\label{lstiefel} Assume $1\le l\le q$. Then
$$
\rank \pi_p(V_{q,l})=
\begin{cases}
2, &\text{if }4\,|\,p+1=q\le 2l;\\
1, &\text{if }4\,|\,p+1\ne q\text{ and }p/2+1<q<l+p/2+1   ,\\
   &\text{or }4\,|\,p+1=q>2l,\\
   &\text{or }4\,|\,p-1=q-2,\\
   &\text{or }2\,|\,p=q-l;\\
0, &\text{otherwise.}
\end{cases}
$$
\end{lemma}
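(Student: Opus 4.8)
The plan is to compute the rank of $\pi_p(V_{q,l})$ by a standard rationalization argument using the fibration structure of Stiefel manifolds, reducing everything to known facts about the rational homotopy of spheres. First I would recall that the Stiefel manifold $V_{q,l}$ fits into a tower of fibrations
\[
S^{q-l} \hookrightarrow V_{q,l} \to V_{q,l-1},
\]
arising from the iterated projection that forgets the last vector of an $l$-frame (with fiber $S^{q-1}$ at the top stage, and in general a sphere $S^{q-j}$ at each stage). Rationally a sphere $S^n$ has $\pi_*(S^n)\tensor\Q$ nonzero only in two cases: it is $\Q$ in degree $n$ for all $n$, and additionally $\Q$ in degree $2n-1$ when $n$ is even (the Whitehead-square/Hopf class). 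So each sphere $S^{q-j}$ contributes rationally one generator in degree $q-j$ and, when $q-j$ is even, one more generator in degree $2(q-j)-1$. The entire computation reduces to counting, for a fixed target degree $p$, how many of these generators across the $l$ fibers survive and are linearly independent in $\pi_p(V_{q,l})\tensor\Q$.

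The key technical input is that, after tensoring with $\Q$, the long exact homotopy sequences of these fibrations split into short exact sequences, so the rational homotopy of $V_{q,l}$ is the direct sum of the rational homotopies of the fiber spheres $S^{q-1}, S^{q-2}, \dots, S^{q-l}$, except that the connecting (boundary) maps may be nonzero and can cancel certain generators. Concretely I would identify $\rank \pi_p(V_{q,l})$ as the number of pairs (sphere $S^{q-j}$, class type) landing in degree $p$, corrected for the boundary homomorphisms. The boundary maps in the rational homotopy sequence of $S^{q-l}\to V_{q,l}\to V_{q,l-1}$ are governed by the Euler class / Whitehead product structure and are essentially multiplication by $2$ or $0$ depending on parities of $q$ and $q-l$; rationally any nonzero map is injective, so the only effect is whether a given $\Q$ summand is killed. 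This is exactly where the case distinctions in the statement — the divisibility conditions $4\mid p+1$, $4\mid p-1$, $2\mid p$, and the comparisons of $q$ with $p/2+1$ and $l+p/2+1$ — come from: each condition records which fiber sphere $S^{q-j}$ has even or odd dimension and whether its top-dimensional class $p=q-j$ or its Whitehead class $p=2(q-j)-1$ falls in the right range $q-l\le q-j\le q-1$ and is not cancelled by an adjacent boundary map. The rank $2$ case arises precisely when two independent $\Q$ classes (from two different fiber spheres, or a base class together with a Whitehead class) coincide in the same degree $p$, which forces $4\mid p+1=q\le 2l$.

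I would organize the proof by translating the problem into this bookkeeping: list for each $j\in\{1,\dots,l\}$ the rational homotopy classes of $S^{q-j}$ in degree $p$ (the generator in degree $q-j$, present iff $p=q-j$, i.e. $q-l\le p\le q-1$; and the Whitehead class in degree $2(q-j)-1$, present iff $q-j$ is even and $p=2(q-j)-1$, i.e. $q-j=(p+1)/2$ with $p$ odd), then subtract those cancelled by nonzero rational boundary maps. The correspondence between the surviving classes and the listed conditions — for instance $4\mid p+1$ and $p/2+1<q<l+p/2+1$ yielding a single Whitehead class — would then be verified case by case, the rank $2$ coincidence being the overlap of two such contributions. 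The main obstacle I anticipate is determining precisely the behavior of the rational boundary maps, i.e. deciding in each parity case whether the connecting homomorphism is rationally zero or injective; this requires knowing the relevant Whitehead products and Euler classes in the homotopy exact sequence of the sphere bundles, and it is the computation of these that dictates whether a candidate $\Q$ class survives. Once the boundary maps are pinned down, assembling the final piecewise formula is a routine, if intricate, matching of ranges and divisibility conditions. Since the lemma states the ranks are ``essentially known,'' I would cite the standard references for the rational homotopy of Stiefel manifolds rather than redoing the boundary-map analysis from scratch, and merely check that the cited ranks match the case division displayed above.
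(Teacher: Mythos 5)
Your route is genuinely different from the paper's and is, in outline, viable. The paper never touches the sphere-bundle tower $S^{q-l}\to V_{q,l}\to V_{q,l-1}$: it uses the single fibration $SO_{q-l}\to SO_q\to V_{q,l}$, so that $\pi_p(V_{q,l})\otimes\Q\cong\mathrm{coker}(i_p\otimes\mathrm{Id}_\Q)\oplus\ker(i_{p-1}\otimes\mathrm{Id}_\Q)$ for the stabilization maps $i_*\colon\pi_*(SO_{q-l})\to\pi_*(SO_q)$; it then quotes the rational homotopy of $SO_n$ (generators in degrees $4k-1$ detected by Pontryagin classes, plus one in degree $2n-1$ for $SO_{2n}$ detected by the Euler class, via F\'elix--Halperin--Thomas) and decides which generators survive using two soft facts: Pontryagin classes are stable, so $i_*$ carries $P_{p,q-l}$ to $P_{p,q}$ whenever both are nonzero, and the Euler class dies under $i_*$ because the stabilized bundle admits a section. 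This disposes of all the boundary-map questions in one stroke. Your tower approach instead needs an induction on $l$ and a separate determination of each connecting homomorphism into $\pi_{p-1}(S^{q-j})$, which you correctly identify as the crux but then defer to ``standard references''; since the entire content of the lemma is precisely which of those maps are rationally nonzero, your write-up as it stands outsources the heart of the argument (only the first boundary map is the easy ``multiplication by $1+(-1)^{q-l}$''; the higher ones require Whitehead-product input). That said, your bookkeeping is sound where you carry it out --- in particular you correctly locate the rank-$2$ case $4\mid p+1=q\le 2l$ as the coincidence in degree $p$ of the class of the top fiber $S^{q-1}$ with the Whitehead square coming from $S^{q/2}$ --- so if you either pin down the connecting maps or cite a source that states the ranks of $\pi_*(V_{q,l})\otimes\Q$ explicitly, your proof closes; the paper's choice of fibration is simply what lets it avoid this case-by-case boundary analysis.
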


\begin{theorem} \label{th-framed}
Assume $p_k<m-2$ and $0\le l_k\le m-p_k$. Then there is an equality
\[ \mathrm{rk} E^m(\sqcup_{k=1}^r S^{p_k} \times D^{l_k})= \mathrm{rk} E^m(\sqcup_{k=1}^r S^{p_k})+\sum_{k=1}^r \mathrm{rk}\pi_{p_k}(V_{m-p_k,l_k}).\]
\end{theorem}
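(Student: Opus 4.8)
The plan is to analyse the forgetful homomorphism
\[
\Forg\colon E^m(\sqcup_{k=1}^r S^{p_k}\times D^{l_k})\longrightarrow E^m(\sqcup_{k=1}^r S^{p_k}),
\]
which restricts a partially framed embedding to the zero sections $\sqcup_{k=1}^r S^{p_k}\times\{0\}$. For a fixed link $g$, the partially framed links lying over $g$ are exactly the choices of $l_k$ pointwise independent sections of the normal bundle $\nu_k$ of the $k$-th component, that is, sections of the associated bundle with fibre the Stiefel manifold $V_{m-p_k,l_k}$; two framings related by an isotopy of framed embeddings differ by a class in $\pi_{p_k}(V_{m-p_k,l_k})$. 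First I would record the resulting \emph{framing exact sequence} of abelian groups (in the spirit of \cite{Hae66A,CRS08,Sko08,Sko11}), whose relevant portion reads
\[
\bigoplus_{k=1}^r\pi_{p_k}(V_{m-p_k,l_k})\xrightarrow{\,i\,} E^m(\sqcup_{k=1}^r S^{p_k}\times D^{l_k})\xrightarrow{\,\Forg\,} E^m(\sqcup_{k=1}^r S^{p_k})\xrightarrow{\,\partial\,}\bigoplus_{k=1}^r\pi_{p_k-1}(V_{m-p_k,l_k}),
\]
where $i$ sends a tuple of framing differences to the corresponding reframing of the trivial link, $\partial$ sends a link to the primary obstruction to $l_k$-framing each component, and exactness holds at the two middle groups.

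Next I would tensor this sequence with $\Q$. Since the identity to be proved concerns only ranks, a short rank bookkeeping shows that the theorem follows once (i) $\Forg\tensor\Q$ is surjective, equivalently $\partial\tensor\Q=0$, and (ii) $i\tensor\Q$ is injective, equivalently $\kernel i$ is finite. Indeed, under (i) and (ii) the rationalised sequence yields the short exact sequence
\[
0\to\bigoplus_{k=1}^r\pi_{p_k}(V_{m-p_k,l_k})\tensor\Q\xrightarrow{\,i\,} E^m(\sqcup_{k=1}^r S^{p_k}\times D^{l_k})\tensor\Q\xrightarrow{\,\Forg\,} E^m(\sqcup_{k=1}^r S^{p_k})\tensor\Q\to 0,
\]
so the ranks add in precisely the asserted way. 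Thus the whole problem reduces to the rational vanishing of the two connecting homomorphisms.

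The rational vanishing of $\partial$ is the heart of the matter. I would first note that $\partial$ depends only on the normal bundles $\nu_k$, and that each $\nu_k$ is unchanged by deleting the other components; hence $\partial$ factors through the knot summands $\bigoplus_{k=1}^r E^m(S^{p_k})$ of Theorem~\ref{th-structure} (on the complementary summand the components are unknotted, so $\nu_k$ is trivial and $\partial=0$), reducing the claim to a single embedded sphere. For one sphere, $\partial$ is the image of the classifying element $[\nu_k]\in\pi_{p_k-1}(O(m-p_k))$ under the map $\pi_{p_k-1}(O(m-p_k))\to\pi_{p_k-1}(V_{m-p_k,l_k})$ induced by $O(m-p_k)\to V_{m-p_k,l_k}$. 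Since $TS^m|_{S^{p_k}}\cong TS^{p_k}\oplus\nu_k$ with $TS^m|_{S^{p_k}}$ and $TS^{p_k}$ stably trivial, the bundle $\nu_k$ is stably trivial, so $[\nu_k]$ lies in the kernel of the stabilisation $\pi_{p_k-1}(O(m-p_k))\to\pi_{p_k-1}(O)$. When $m>2p_k$ this forces $\nu_k$ to be trivial and $\partial=0$ integrally; in the remaining range I would verify that the composite $\kernel(\mathrm{stab})\to\pi_{p_k-1}(V_{m-p_k,l_k})$ is rationally zero, using the fibrations $O(m-p_k-l_k)\to O(m-p_k)\to V_{m-p_k,l_k}$ and $O(m-p_k)\to O\to O/O(m-p_k)$ together with the rational computation of the homotopy of Stiefel manifolds underlying Lemma~\ref{lstiefel}.

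Finally I would show that $\kernel i$ is finite by the same mechanism: $\kernel i$ consists of framings of the trivial link that become standard once the framed embedding is allowed to move, and it is the image of the connecting map from the automorphisms of the trivial embedding, which is again controlled by the Stiefel fibrations above; its rational triviality follows in parallel with that of $\partial$. Combining the two statements produces the displayed short exact sequence and hence the rank formula. I expect the genuine obstacle to be the rational vanishing of $\partial$ in the range $m\le 2p_k$: this is exactly the range in which the knot groups $E^m(S^{p_k})$ can be infinite (Theorem~\ref{th1}) and $\nu_k$ can be stably but not actually trivial, so one must check that the infinite part of the knot group maps to a rationally trivial framing obstruction rather than merely appealing to triviality of $\nu_k$.
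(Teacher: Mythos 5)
Your overall architecture coincides with the paper's: the framing exact sequence (the analogue of Haefliger's Corollary~5.9, stated as Theorem~\ref{th4}, combined with the splitting analogous to Theorem~\ref{th-structure}), followed by the reduction of the rank formula to showing that the two connecting maps into the Stiefel homotopy groups have rationally trivial image. The paper disposes of both connecting maps at once by proving the single statement that $E^{m}(S^p)\to \pi_{p-1}(V_{m-p,l})$ has finite image for all $p<m-2$ (applied at $(m,p)$ and again at $(m+1,p+1)$ to handle $\kernel\tau$), via a blunt dichotomy: either $4\nmid p+1$ or $m\ge 3p/2+2$, in which case the source is finite by Theorem~\ref{th1}; or $4\,|\,p+1$ and $m<3p/2+2$, in which case the target is rationally trivial.

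The gap is in your plan for the ``remaining range''. The intermediate claim you propose to verify there --- that the composite $\kernel\bigl(\pi_{p-1}(O(m-p))\to\pi_{p-1}(O)\bigr)\to\pi_{p-1}(V_{m-p,l})$ is rationally zero --- is false. Take $m=2p$ with $p$ even and $l\ge 1$ (e.g.\ $m=8$, $p=4$, $l=1$, which satisfies $p<m-2$): the Euler-class generator of $\pi_{p-1}(SO_{p})\otimes\Q$ is killed by stabilisation but maps to a rationally nontrivial element of $\pi_{p-1}(V_{p,l})\otimes\Q$; concretely, $TS^4$ is stably trivial yet its image in $\pi_3(V_{4,1})=\pi_3(S^3)$ is its Euler number. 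So stable triviality of $\nu_k$ alone cannot carry the argument. What saves the statement is a parity observation you have not made: $\pi_{p-1}(SO_{m-p})\otimes\Q$ is concentrated in odd degrees $p-1$, hence is nonzero only for $p$ even, whereas $E^m(S^p)$ can be infinite only when $4\,|\,p+1$, hence only for $p$ odd; so the source of your factored map $E^m(S^p)\to\pi_{p-1}(SO_{m-p})\to\pi_{p-1}(V_{m-p,l})$ is finite whenever the middle group has positive rank. Once this (or the paper's equivalent dichotomy) is inserted, your argument closes; the rest of your outline --- the reduction to the knot summands via Theorem~\ref{th-structure} and the treatment of $\kernel i$ as the image of the same obstruction one dimension up --- is sound modulo the same missing verification.
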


Combining statements~\ref{th1}, \ref{th-framed}, \ref{lstiefel}, and~\ref{cor-globalfinite} we obtain the following corollaries.

\begin{corollary}\label{cor-framedfinite}
Assume $p<m-2$ and $1\le l\le m-p$. Then the set $E^m(S^p\times D^l)$ is infinite if and only if at least one of the following conditions holds:
\begin{itemize}
\item $4\,|\,p+1$ and $m<3p/2+l+1$;
\item $2\,|\,p+1$ and $m=2p+1$;
\item $2\,|\,p$ and $m=2p+l$.
\end{itemize}
\end{corollary}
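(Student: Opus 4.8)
The plan is to specialise Theorem~\ref{th-framed} to the one-component case $r=1$, reducing the rank of $E^m(S^p\times D^l)$ to a sum of two nonnegative contributions, and then to read off when each contribution is positive from Theorem~\ref{th1} and Lemma~\ref{lstiefel}. Since $E^m(S^p\times D^l)$ is a finitely generated abelian group, it is infinite if and only if its rank is positive. By Theorem~\ref{th-framed} with $r=1$ we have $\rank E^m(S^p\times D^l)=\rank E^m(S^p)+\rank\pi_p(V_{m-p,l})$, and both summands are nonnegative; hence $E^m(S^p\times D^l)$ is infinite if and only if at least one of $\rank E^m(S^p)$ and $\rank\pi_p(V_{m-p,l})$ is positive. (For $r=1$ the criterion of Corollary~\ref{cor-globalfinite} is exactly Theorem~\ref{th1}, so I use the latter directly.)

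First I would record when each summand is positive. By Theorem~\ref{th1}, $\rank E^m(S^p)>0$ if and only if $4\mid p+1$ and $p>\frac{2}{3}(m-2)$, the latter being equivalent to $m<3p/2+2$. For the Stiefel summand I set $q=m-p$, so that $q\ge3$ because $p<m-2$, and $1\le l\le q$. Reading Lemma~\ref{lstiefel} and substituting $q=m-p$ translates each listed condition into a condition on $m$: the case $4\mid p+1=q\le 2l$ becomes ``$4\mid p+1$, $m=2p+1$, $p+1\le 2l$''; the case $p/2+1<q<l+p/2+1$ with $4\mid p+1\ne q$ becomes ``$4\mid p+1$, $m\ne 2p+1$, $3p/2+1<m<3p/2+l+1$''; the case $4\mid p+1=q>2l$ becomes ``$4\mid p+1$, $m=2p+1$, $p+1>2l$''; the case $4\mid p-1=q-2$ becomes ``$4\mid p-1$, $m=2p+1$''; and the case $2\mid p=q-l$ becomes ``$2\mid p$, $m=2p+l$''. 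Thus $\rank\pi_p(V_{m-p,l})>0$ if and only if one of these five conditions holds.

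Next I would assemble the final criterion by a short case analysis on $p\bmod 4$. If $p$ is even, only the last Stiefel condition can hold, giving exactly the third bullet $2\mid p$ and $m=2p+l$. If $p\equiv1\pmod4$, the knot contribution vanishes and only the condition $4\mid p-1$, $m=2p+1$ survives; since $p$ is odd this is the second bullet $2\mid p+1$ and $m=2p+1$. If $p\equiv3\pmod4$, the knot condition $m<3p/2+2$ together with the three Stiefel conditions carrying the hypothesis $4\mid p+1$ all contribute. Here I would argue that the knot region $m<3p/2+2$ and the open interval $3p/2+1<m<3p/2+l+1$ overlap (since $3p/2+l+1\ge 3p/2+2$ for $l\ge1$), so their union is $\{m<3p/2+l+1\}$, the first bullet. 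The one point of this region omitted by the open interval, $m=2p+1$, falls inside the interval precisely when $p+1\le 2l$, in which case the condition ``$m=2p+1$, $p+1\le 2l$'' restores it; when $p+1>2l$ the point $m=2p+1$ lies at or beyond the right endpoint $3p/2+l+1$, outside the first-bullet region, and is supplied by the condition ``$m=2p+1$, $p+1>2l$''. Hence the union of all surviving conditions is $\{m<3p/2+l+1\}\cup\{m=2p+1\}$, exactly the first bullet together with the second bullet (using $2\mid p+1$ for $p$ odd).

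The main obstacle is the bookkeeping in the case $p\equiv3\pmod4$: one must verify that the half-integer boundary $3p/2+2$ coming from the knot term and the integer endpoints of the interval $3p/2+1<m<3p/2+l+1$ coming from the Stiefel term dovetail to cover $\{m<3p/2+l+1\}$ with no gap, and that the single point $m=2p+1$ excluded from the open Stiefel interval is exactly recaptured by the remaining Stiefel cases and coincides with the second bullet. Checking the boundary point $m=2p+1$ against the dichotomy $p+1\le 2l$ versus $p+1>2l$, and confirming compatibility with the standing hypothesis $p<m-2$ (i.e.\ $q\ge3$), completes the argument.
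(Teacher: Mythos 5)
Your proposal is correct and follows essentially the same route as the paper, which derives this corollary precisely by combining Theorem~\ref{th-framed} (with $r=1$), Theorem~\ref{th1}, and Lemma~\ref{lstiefel}; your case analysis on $p \bmod 4$ and the bookkeeping around the point $m=2p+1$ correctly fills in the arithmetic the paper leaves to the reader.
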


\begin{corollary}\label{cor-globalframed}
Assume that $p_1,\dots,p_r<m-2$. Then the set $E^m(\sqcup_{k=1}^r S^{p_k}\times D^{m-p_k})$ is infinite if and only if there is a subsequence $(k_1,\dots,k_s)\subset (1,\dots,r)$ satisfying one of the following conditions:
\begin{itemize}
\item
$s=1$ and either $4\,|\,p_{k_1}+1$ or $4\,|\,m+1=2p_{k_1}+2$;
\item
$s=2$ and there is $(x_1, x_2)\in FCS (m-p_{k_1}, m - p_{k_2})$ such that $(m- p_{k_1} - 2) x_1 + (m - p_{k_2} - 2)x_2 = m-3$;
\item
$s\ge3$ and the equation $(m - p_{k_1} - 2)x_1 + \dots + (m - p_{k_s} - 2)x_s = m-3$ has a solution in positive integers.
\end{itemize}
\end{corollary}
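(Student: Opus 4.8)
The plan is to read off the criterion from the rank formula of Theorem~\ref{th-framed} combined with the finiteness criterion for unframed links in Corollary~\ref{cor-globalfinite}. Since $E^m(\sqcup_{k=1}^r S^{p_k}\times D^{m-p_k})$ is a finitely generated abelian group, it is infinite precisely when its rank is positive. Specializing Theorem~\ref{th-framed} to the fully framed case $l_k = m - p_k$ gives
\[
\mathrm{rk}\,E^m\Bigl(\textstyle\bigsqcup_{k=1}^r S^{p_k}\times D^{m-p_k}\Bigr)
= \mathrm{rk}\,E^m\Bigl(\textstyle\bigsqcup_{k=1}^r S^{p_k}\Bigr)
+ \sum_{k=1}^r \mathrm{rk}\,\pi_{p_k}(V_{m-p_k,\,m-p_k}).
\]
All summands are non-negative, so the framed group is infinite if and only if either the unframed group $E^m(\sqcup_{k=1}^r S^{p_k})$ is infinite or $\mathrm{rk}\,\pi_{p_k}(V_{m-p_k,m-p_k})>0$ for at least one index $k$. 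The first alternative is described by the three bullets of Corollary~\ref{cor-globalfinite}, while the second is a single-component condition. Thus the whole task reduces to showing that adjoining this single-component framing condition to the $s=1$ bullet of Corollary~\ref{cor-globalfinite} produces exactly the $s=1$ bullet stated here, the $s=2$ and $s\ge 3$ bullets being literally unchanged.

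First I would dispose of the multi-component cases. Because the framing correction $\sum_k \mathrm{rk}\,\pi_{p_k}(V_{m-p_k,m-p_k})$ is a sum of single-component terms, it cannot affect any Brunnian summand with two or more components. Hence the $s=2$ and $s\ge 3$ conditions are inherited verbatim from Corollary~\ref{cor-globalfinite}, and it remains only to analyze the contribution of a single component $p := p_{k_1}$.

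Next I would compute the framing rank $\mathrm{rk}\,\pi_p(V_{q,q})$ with $q = m-p$ by specializing Lemma~\ref{lstiefel} to $l = q$. In this specialization the clause $2\mid p = q-l$ forces $p=0$ and is vacuous, the clause $4\mid p+1 = q > 2l$ is vacuous, and the remaining clauses collapse to: the rank is positive exactly when either $4\mid p+1$ and $m>3p/2+1$, or $4\mid p-1$ and $m = 2p+1$. On the other hand the $s=1$ bullet of Corollary~\ref{cor-globalfinite} contributes the unframed condition ``$4\mid p+1$ and $m<3p/2+2$''. Taking the disjunction of the unframed condition with the framing condition, the two half-lines $m < 3p/2+2$ and $m>3p/2+1$ overlap and so cover every integer $m$; consequently the case $4\mid p+1$ survives with no constraint on $m$, and the only additional case is $4\mid p-1$ together with $m=2p+1$. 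Observing finally that ``$4\mid p+1$ or ($4\mid p-1$ and $m=2p+1$)'' is equivalent to ``$4\mid p+1$ or $4\mid m+1 = 2p+2$'' --- since $4\mid m+1=2p+2$ means exactly $m=2p+1$ with $p$ odd, and the sub-case $p\equiv 3 \pmod 4$ is already absorbed into $4\mid p+1$ --- yields the stated $s=1$ bullet. The one genuinely delicate point, and the step I would treat most carefully, is precisely this collapse of the two $m$-constraints: it is what turns the bounded window ``$m<3p/2+2$'' of the unframed criterion into the unconditional clause ``$4\mid p+1$'' for framed links, and it must be checked that the half-integer thresholds $3p/2+1$ and $3p/2+2$ (for $p$ odd) really do leave no integer $m$ uncovered.
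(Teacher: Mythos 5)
Your proposal is correct and follows essentially the same route as the paper, which derives this corollary by combining Theorem~\ref{th-framed}, Lemma~\ref{lstiefel} (specialized to $l=q$), and Corollary~\ref{cor-globalfinite} exactly as you do. Your careful verification that the two half-lines $m<3p/2+2$ and $m>3p/2+1$ cover all integers (since $3p/2$ is a half-integer when $4\mid p+1$), and that the residual case collapses to $4\mid m+1=2p+2$, fills in precisely the arithmetic the paper leaves to the reader.
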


\subsection{Other types of links}

For $p_1,\dots,p_r<m-2$ the group of piecewise-linear (respectively, continuous) isotopy classes of piecewise-linear (respectively, continuous) embeddings $\sqcup_{k=1}^{r} S^{p_k}\to S^m$ is isomorphic to the group $E^m_{\mathrm{U}}(\sqcup_{k=1}^{r} S^{p_k})$ by \cite[Section~2.6]{Hae66C} (respectively, \cite{Br72}).
Thus our results provide the rational classification of piecewise-linear and topological links as well.

\subsection{The organization of the paper and its relationship to other work}


We give the proofs of all the results stated above in Section \ref{sec-proofs}:  We start by recalling the Haefliger sequence in Section \ref{ssec-Hae}. Using this sequence at the beginning of Section~\ref{ssec-ranks-proofs} we convert the problem of the rational classification of links into a completely algebraic problem in the theory of Lie algebras.  In the remainder of Section~\ref{ssec-ranks-proofs} we solve this problem and so obtain the formulae for ranks of the groups of links.  In this way we convert the question of which groups of links are finite into a problem of elementary number theory.  We present a solution to this latter problem in Section~\ref{ssec-fin-proofs}. Using our results for unframed links and some standard algebraic topology we obtain the rational classification of framed links in Section~\ref{ssec-framed-proofs}.


In Sections~\ref{ssec-handle}, \ref{ssec-mapping}, and further in~\cite{CFS11} we apply our calculations to the classification of handlebodies, thickenings and to the computation of mapping class groups. In~\cite{Sko11} the third author applies our calculations to the rational classification of embeddings of a product of two spheres;
see \cite{CRS07, CRS08, Sko08} for particular cases.
In Section~\ref{ssec-open} we discuss some open problems.

\section{Proofs} \label{sec-proofs}



In this section we prove the results stated in the introduction. The results stated in Sections~\ref{ssec-fin}, \ref{ssec-ranks},  and ~\ref{ssec-framed} are proven in Sections~\ref{ssec-fin-proofs}, \ref{ssec-ranks-proofs}, and~\ref{ssec-framed-proofs}, respectively.

\subsection{The Haefliger sequence for Brunnian links} \label{ssec-Hae}
By Theorem~\ref{th-structure} the group of links splits as the sum of the groups of Brunnian links.  There is a Haefliger sequence for Brunnian links, \eqref{seq:PHae} below, which is analogous to the
Haefliger sequence~\eqref{seq:UHae}.  In this subsection we recall the groups in the Haefliger sequence for Brunnian links and also the key homomorphism $w$: we refer the reader to \cite[\S9.4]{Hae66C} and \cite[\S1.2]{Ne82} for further details.  We also give the definition of the groups in the Haefliger sequence~\eqref{seq:UHae} and state the Hilton--Milnor theorem which plays a key role in all computations using these sequences.



Denote by $\pi_g(X)$ the $g$-th homotopy group of a space $X$. For $i=0,\dots,r$ there are obvious retractions $\hat R_i \colon \! \! \vee_{k=1}^r S^{m-p_k-1} \to \vee _{1\le k\le r,\,k\ne i} S^{m-p_k-1}$ obtained by collapsing the $i$-th sphere $S^{m-p_i-1}$ from the wedge.  Taking the kernel of the sum over $i$ of the homomorphism induced on $\pi_{g+1}$ one obtains the finitely generated abelian group
\[
{}^i \Pi^{(q)}_{g+1} : = \kernel \left\{ \pi_{g+1}\left(\bigvee_{1\le k\le r} S^{m-p_k-1}\right)\to
\bigoplus_{1\le j\le r,\,j\ne i} \pi_{g+1}\left(\bigvee_{1\le k\le r,\,k\ne j} S^{m-p_k-1}\right) \right\}.
\]
In addition, define
\[ {}^0\! \Lambda^{(q)}_{(p)}:= \bigoplus_{1\le k\le r}{}^k \Pi^{(q)}_{p_k}\]
and define the homomorphism
\[  w\colon {}^0\! \Lambda^{(q)}_{(p)}\to {}^0 \Pi^{(q)}_{g+1}, \quad (u_1,\dots,u_r) \mapsto [u_1,\i_1]+\dots+[u_r,\i_r],\]
where $\i_i\colon S^{m-p_i-1}\to \vee_{k=1}^r S^{m-p_k-1}$ is the obvious inclusion and $[\cdot,\cdot]$ is the Whitehead product.
The Haefliger sequence for Brunnian links is a long exact sequence of finitely generated abelian groups which runs as follows:
\begin{equation} \label{seq:PHae}
\dots \to {}^0\Pi_{m-1}^{(q)} \dlra{\mu} E^m_{\mathrm{B}}(\sqcup_{k=1}^r S^{p_k}) \dlra{\lambda}  {}^0 \! \Lambda_{(p)}^{(q)} \dlra{w} {}^0\Pi_{m-2}^{(q)} \dlra{\mu}  E^{m-1}_{\mathrm{B}}(\sqcup_{k=1}^r S^{p_k-1}) \to \dots
\end{equation}
Note that we use the same notation for the maps in both sequences~\eqref{seq:PHae} and~\eqref{seq:UHae}: no confusion will arise from this.

The groups in the Haefliger sequence~\eqref{seq:UHae} are defined by similar formulae:
\begin{align*}
\Pi_{g}^{(q)}&=\kernel\left(\pi_{g}(\vee_{k=1}^r S^{m-p_k-1})\to\oplus_{k=1}^r \pi_{g}(S^{m-p_k-1})\right),\\
\Lambda_{(p)}^{(q)}&=\oplus_{i=1}^r\kernel\left(\pi_{p_i}(\vee_{k=1}^r S^{m-p_k-1})\to \pi_{p_i}(S^{m-p_i-1})\right).\\
\end{align*}
\vskip-0.8cm
The homomorphism $w \colon \Lambda_{(p)}^{(q)} \to \Pi_{g}^{(q)}$ is also defined
analogously via Whitehead products.
%

The homotopy groups in the sequences~\eqref{seq:UHae} and~\eqref{seq:PHae} can be expressed in terms of the homotopy groups of spheres using the \emph{Hilton--Milnor theorem} which we now recall; see \cite[\S2.1]{Ne82} or \cite[Section 5 of Chapter IV]{Serre-69} for the necessary definitions.  Let $B$ be the Hall basis in a free graded Lie algebra (not to be confused with a superalgebra) generated by elements $P_1,\dots,P_r$ of degrees $m-p_1-2,\dots,m-p_r-2$. Denote by $g(b)$ the degree of an element $b\in B$. The Hilton-Milnor theorem states that there is an isomorphism
$$
\pi_g(\vee_{k=1}^r S^{m-p_k-1})\cong \bigoplus_{b\in B} \pi_g(S^{g(b)+1}).
$$
The isomorphism from the the right to the left is given by the formula $\{x_b\}_{b\in B}\mapsto \sum \alpha(b)\circ x_b$,
where for a product $b$ of generators $P_1,\dots,P_r$ we denote by $\alpha(b)$ the analogous Whitehead product of the homotopy classes $\i_1,\dots,\i_r$, and $\circ$ denotes the composition of homotopy classes.

\begin{remark} \label{rem:Hil-Mil}
In general the map $b\mapsto\alpha(b)$ does \emph{not} extend to a multiplicative homomorphism.
For instance,  for $m-p_1$
odd we have $[\alpha(P_1),\alpha(P_1)]=[\i_1,\i_1]\ne 0=\alpha(0)=\alpha([P_1,P_1])$.  This explains the mistake in the proof \cite[Corollary~3.18]{Schmitt-02} where the map $b \mapsto \alpha(b)$ is assumed to be multiplicative.
%
 \end{remark}


\subsection{Computation of ranks} \label{ssec-ranks-proofs}\label{ssec-haefliger}


In this subsection we prove Theorems~\ref{thm-general} and~\ref{th2}.
Theorem~\ref{thm-general} follows from assertions~\ref{thhae}
and~\ref{lker} below. Theorem~\ref{th2} can be obtained from Theorem~\ref{thm-general} but we give a short alternative proof.

A {\it graded Lie superalgebra}\footnotemark\ over $\mathbb{Q}$ is a graded vector space $L=\bigoplus_{g=0}^{\infty}L_g$, along with a bilinear operation
$[\cdot,\cdot]\colon L\otimes L \to L$ satisfying the following axioms:
\begin{enumerate}
\item \emph{Respect of the grading}: $[L_i,L_j]\subseteq L_{i+j}$.
\item \emph{Symmetry}: if $u \in L_i$, $v \in L_j$ then $[u,v]-(-1)^{(i+1)(j+1)}\,[v,u]=0$.
\item \emph{Jacobi identity}: if $u\in L_i$, $v\in L_j$, $w\in L_k$ then
$$
( - 1)^{(i+1)(k+1)}[[u,v],w] + ( - 1)^{(j+1)(i+1)}[[v,w],u] + ( - 1)^{(k+1)(j+1)}[[w,u],v] = 0.
$$
\end{enumerate}

\footnotetext{
Some authors use another definition, which can be obtained from ours replacing $[x,y]$ by $(-1)^{\deg x}[x,y]$. Both definitions lead to
the same dimensions of homogeneous components of free graded Lie superalgebras.
}

A \emph{free graded Lie superalgebra} generated by a set of elements with given degrees 
is the quotient of the free algebra generated by the set by the ideal generated by the terms from the left parts of the axioms~(2)--(3). The grading of this quotient is uniquely defined by axiom~(1). Hereafter denote by $L=\bigoplus_{g=0}^{\infty}L_g$ the free Lie superalgebra generated by the elements $P_1,\dots,P_r$ of degrees $m-p_1-2,\dots,m-p_r-2$.

Denote by $L^{0}$ the subalgebra of $L$ generated by the products containing each generator $P_1,\dots,P_r$
at least once. Denote by $L^{i}$ the subalgebra generated by the products containing each generator at least once except possibly $P_i$. Denote by $L^i_g$ the subspace of $L^{i}$ formed by elements of degree $g$.
For a finitely generated abelian group $G$ identify $G\otimes \mathbb{Q}=\mathbb{Q}^{\rank G}$.

Using this notation we can state a version of the Haefliger sequence~\eqref{seq:PHae} in purely algebraic terms:

\begin{theorem}\label{thhae}
For $r>1$ and $p_1,\dots,p_r<m-2$ there is an exact sequence
$$
\dots                                             \to
\oplus_{k=1}^r L^{k}_{p_k}                        \xrightarrow{w}
L^{0}_{m-2}                                       \to
E^m_{\mathrm{B}}(\sqcup_{k=1}^r S^{p_k})\otimes \mathbb{Q}\to
\oplus_{k=1}^r L^{k}_{p_k-1}                      \xrightarrow{w}
L^{0}_{m-3}                                       \to
\dots
$$
The linear map $w$
in the sequence is given by the formula
$w(u_1,\dots,u_r)=[u_1,P_1]+\dots+[u_r,P_r]$.
\end{theorem}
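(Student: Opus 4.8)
The plan is to tensor the Haefliger sequence \eqref{seq:PHae} with $\Q$ and then identify each of its terms with a homogeneous piece of a subalgebra $L^i$ and each homomorphism $w$ with the bracket map $(u_1,\dots,u_r)\mapsto\sum_k[u_k,P_k]$. Since $\Q$ is flat over $\Z$, tensoring the exact sequence \eqref{seq:PHae} with $\Q$ again yields an exact sequence, so the whole statement reduces to a natural identification of the rationalized homotopy groups appearing in \eqref{seq:PHae} with the relevant graded pieces of $L$. Observe first that every level of \eqref{seq:PHae}, obtained by passing from $(m,p_1,\dots,p_r)$ to $(m-1,p_1-1,\dots,p_r-1)$, involves one and the same wedge $W:=\bigvee_{k=1}^r S^{m-p_k-1}$ and the same generator degrees $d_k=m-p_k-2>0$, so a single identification will serve for all the terms at once.

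The heart of the argument is an isomorphism of graded Lie superalgebras
\[
\Phi\colon L \xrightarrow{\ \cong\ } \bigoplus_{n\ge 0}\pi_{n+1}(W)\tensor\Q,
\]
where the target carries the Whitehead product as bracket, regraded so that $\pi_{n+1}(W)\tensor\Q$ sits in degree $n$. With this regrading the Whitehead product satisfies precisely the axioms (1)--(3) above: indeed, writing $i+1,j+1,k+1$ for the homotopy degrees, these are exactly the graded anticommutativity $[\alpha,\beta]=(-1)^{(i+1)(j+1)}[\beta,\alpha]$ and the graded Jacobi identity for Whitehead products. I would define $\Phi$ by freeness, setting $\Phi(P_k)=[\i_k]$ for the inclusion $\i_k\colon S^{m-p_k-1}\hookrightarrow W$, so that $\Phi$ carries brackets to Whitehead products by construction. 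The map $\Phi$ is surjective because its image is the sub-Lie-superalgebra generated by the classes $[\i_k]$, which contains every iterated Whitehead product $\alpha(b)$ and every Whitehead square $[\alpha(b),\alpha(b)]$; by the Hilton--Milnor theorem and Serre's computation of $\pi_*(S^N)\tensor\Q$ these classes span the target rationally.

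To conclude that $\Phi$ is an isomorphism I would compare dimensions degree by degree, all graded pieces being finite dimensional since each $d_k>0$. On the target, Hilton--Milnor gives $\pi_{n+1}(W)\tensor\Q\cong\bigoplus_{b\in B}\pi_{n+1}(S^{g(b)+1})\tensor\Q$, and by Serre's theorem $\pi_{n+1}(S^{g(b)+1})\tensor\Q$ contributes a copy of $\Q$ when $g(b)=n$, together with an extra copy coming from the Whitehead square when $g(b)+1$ is even and $n=2g(b)$. Hence
\[
\dim_\Q\bigl(\pi_{n+1}(W)\tensor\Q\bigr)=\#\{b\in B:g(b)=n\}+\#\{b\in B:\, g(b)=n/2\ \text{and}\ g(b)\ \text{odd}\}.
\]
On the source, comparing the two expansions of the Hilbert series of the common enveloping algebra $T(V)$ — the ordinary PBW expansion, which recovers the free graded Lie \emph{algebra} with Hall basis $B$, against the super PBW expansion — shows that a basis of $L$ is obtained from $B$ by adjoining the squares $[b,b]$ of the odd-degree elements $b\in B$, yielding exactly the same count for $\dim_\Q L_n$. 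Matching these two counts — the fact that the extra even-sphere homotopy of Serre's theorem is accounted for precisely by the nonvanishing superalgebra squares $[b,b]$ — is the main obstacle, and it is exactly the failure of $b\mapsto\alpha(b)$ to be multiplicative that is flagged in Remark~\ref{rem:Hil-Mil}.

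Finally I would transport structure through $\Phi$. The retraction $\hat R_j$ collapsing the $j$-th sphere induces $[\i_j]\mapsto 0$ and $[\i_k]\mapsto[\i_k]$ for $k\ne j$, so under $\Phi$ the map $(\hat R_j)_*\tensor\Q$ corresponds to the quotient of $L$ by the ideal of monomials containing $P_j$. Therefore the kernel ${}^i\Pi^{(q)}_{g+1}\tensor\Q$, being the intersection over $j\ne i$ of these kernels, is identified with the span of monomials containing every generator except possibly $P_i$, namely $L^i_{g}$; likewise ${}^0\Pi^{(q)}_{g+1}\tensor\Q\cong L^0_{g}$ and ${}^0\!\Lambda^{(q)}_{(p)}\tensor\Q=\bigoplus_k{}^k\Pi^{(q)}_{p_k}\tensor\Q\cong\bigoplus_k L^k_{p_k-1}$. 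Under these identifications the topological map $w(u_1,\dots,u_r)=\sum_k[u_k,\i_k]$ becomes $\sum_k[u_k,P_k]$, and the maps $\mu,\lambda$ become the remaining arrows. Since \eqref{seq:PHae}$\tensor\Q$ is exact and $\Phi$ is a natural isomorphism, the resulting algebraic sequence is exact, which proves the theorem.
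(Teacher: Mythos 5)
Your proposal is correct and its skeleton is the same as the paper's: tensor \eqref{seq:PHae} with $\Q$, identify $\bigoplus_{g}\pi_{g+1}(\vee_{k=1}^r S^{m-p_k-1})\otimes\Q$ with the free Lie superalgebra $L$ so that ${}^i\Pi^{(q)}_{g+1}\otimes\Q\cong L^i_g$, and observe that under this identification the Whitehead-product map becomes $(u_1,\dots,u_r)\mapsto\sum_k[u_k,P_k]$. The genuine difference lies in how the key identification is justified. The paper quotes the rational Hilton--Milnor theorem from Griffiths--Morgan as a black box; you instead derive it from the integral Hilton--Milnor theorem, Serre's computation of $\pi_*(S^N)\otimes\Q$, and a PBW/Hilbert-series comparison giving $\dim_\Q L_n=\#\{b\in B:g(b)=n\}+\#\{b\in B:g(b)=n/2 \text{ odd}\}$, which coincides degree by degree with the rational rank of $\pi_{n+1}$ of the wedge, the Whitehead squares of the even-dimensional sphere summands matching exactly the non-vanishing self-brackets $[b,b]$ of odd-degree Hall elements. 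This buys a self-contained proof of the cited input and isolates precisely why the superalgebra, rather than the ordinary free Lie algebra, is the correct model --- the very point on which Remark~\ref{rem:Hil-Mil} says the computation in \cite{Schmitt-02} founders. One small caveat: what your Hilbert-series argument literally establishes is the equality of finite dimensions in each degree, which together with surjectivity of $\Phi$ is all you need; the stronger claim that the Hall elements and their squares form a basis of $L$ would require a separate linear-independence argument, but nothing in your proof depends on it.
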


\begin{proof}[Proof of Theorem~\ref{thhae}]
We need only rewrite the sequence \eqref{seq:PHae} tensored by $\Q$ in terms of Lie superalgebras.
For a simply connected finite CW-complex $X$
the group $\bigoplus_{g=0}^{\infty}\pi_{g+1}(X)\otimes \mathbb{Q}$
is a graded Lie superalgebra with respect to the Whitehead product operation:
\[ [\cdot,\cdot] \colon (\pi_i(X) \tensor \Q) \times (\pi_j(X) \tensor \Q) \to \pi_{i + j - 1}(X) \tensor \Q. \]
Note that the degree of a homotopy class is one less than its dimension.




A rational version 
of the Hilton--Milnor Theorem \textup{\cite[p.\,116]{GM81}} states that
the graded Lie superalgebra
\[ \bigoplus_{g=0}^{\infty}\pi_{g+1}(\vee_{k=1}^r S^{m-p_k-1})\otimes \mathbb{Q}\]
is isomorphic to the free Lie superalgebra $L$. The isomorphism takes the homotopy class of each inclusion $\i_i\colon S^{m-p_i-1}\to \vee_{k=1}^r S^{m-p_k-1}$ to the generator~$P_i$.
%
Thus ${}^i\Pi^{(q)}_{g+1}\otimes\mathbb{Q}\cong L^{i}_g $
for each $i=0,\dots,r$ and the theorem follows.
\end{proof}


Now the rational classification of links reduces to the following purely algebraic result.

\begin{lemma}\label{lker}
\noindent\textup{(a)} The map $w$ from Theorem~\ref{thhae} is surjective.

\noindent\textup{(b)} The dimension of the kernel of $w$ equals the expression on the right hand side of~\textup{(\ref{eq1})} .
%
\end{lemma}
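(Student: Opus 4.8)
The plan is to decompose the map $w$ along the multidegree grading of the free Lie superalgebra $L$ and reduce both assertions to a single Witt-type dimension count. Grade $L$ by multidegree, writing $L(\mathbf{x})$, $\mathbf{x}=(x_1,\dots,x_r)$, for the span of the Lie monomials in which $P_k$ occurs exactly $x_k$ times; put $|\mathbf{x}|=x_1+\dots+x_r$ and $\mathbf{x}-e_k=(x_1,\dots,x_k-1,\dots,x_r)$. Then $L(\mathbf{x})$ has ordinary degree $g(\mathbf{x})$, and $L^0_{m-3}=\bigoplus_{\mathbf{x}}L(\mathbf{x})$, the sum over all $\mathbf{x}$ with every $x_k\ge1$ and $g(\mathbf{x})=m-3$. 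Since $g(\mathbf{x}-e_k)=(m-3)-(m-p_k-2)=p_k-1$, the summand $L^k_{p_k-1}$ decomposes as $\bigoplus_{\mathbf{x}}L^k(\mathbf{x}-e_k)$; and since every $x_j\ge1$, a monomial of multidegree $\mathbf{x}-e_k$ contains each generator other than $P_k$ at least once, whence $L^k(\mathbf{x}-e_k)=L(\mathbf{x}-e_k)$. Therefore $w$ splits as the direct sum over admissible targets $\mathbf{x}$ of the maps
\[ w_{\mathbf{x}}\colon \bigoplus_{k=1}^r L(\mathbf{x}-e_k)\to L(\mathbf{x}),\qquad (u_1,\dots,u_r)\mapsto\sum_{k=1}^r[u_k,P_k], \]
and the same splitting applies verbatim to the other instances of $w$ in Theorem~\ref{thhae}.

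For part (a) I would show that each $w_{\mathbf{x}}$ is surjective. This rests on the standard fact that a free Lie superalgebra is spanned by its left-normed brackets: iterating the super-Jacobi identity (axiom~(3)) rewrites an arbitrary Lie monomial of length $\ge2$ as a linear combination of brackets $[v,P_k]$ of the same multidegree. Hence, whenever $|\mathbf{x}|\ge2$ — which holds here because $r>1$ and all $x_k\ge1$ — every element of $L(\mathbf{x})$ is a sum $\sum_k[u_k,P_k]$ with $u_k\in L(\mathbf{x}-e_k)$, so $w_{\mathbf{x}}$ is onto. Surjectivity of $w$ then turns the exact sequence of Theorem~\ref{thhae} into the short exact sequences $0\to\ker w\to\bigoplus_{k=1}^r L^k_{p_k-1}\xrightarrow{w}L^0_{m-3}\to0$, so that $E^m_{\mathrm{B}}(\sqcup_{k=1}^r S^{p_k})\otimes\Q\cong\ker w$ and the kernel computation of (b) does compute $\rank E^m_{\mathrm{B}}(\sqcup_{k=1}^r S^{p_k})$.

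For part (b), surjectivity together with rank--nullity gives, for each admissible $\mathbf{x}$,
\[ \dim\ker w_{\mathbf{x}}=\sum_{k=1}^r\dim L(\mathbf{x}-e_k)-\dim L(\mathbf{x}). \]
Comparing with~\eqref{eq4}, it remains to identify $\dim L(\mathbf{x})$ with $d(\mathbf{x})$; granting this, $\dim\ker w_{\mathbf{x}}=m(x_1,\dots,x_r)$ and summing over all $\mathbf{x}$ with $x_k\in\mathbb{N}$ and $g(\mathbf{x})=m-3$ yields exactly the right-hand side of~\eqref{eq1}. The identity $\dim L(\mathbf{x})=d(\mathbf{x})$ is the super Witt formula, and I expect it to be the main obstacle. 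I would derive it from the super Poincaré--Birkhoff--Witt theorem: since $L$ is free, $U(L)$ is the tensor algebra on $\langle P_1,\dots,P_r\rangle$, with multigraded Hilbert series $(1-s_1-\dots-s_r)^{-1}$, while PBW expresses the same series as a product $\prod(1-\mathbf{s}^{\mathbf{x}})^{-\dim L(\mathbf{x})}$ over the even components times $\prod(1+\mathbf{s}^{\mathbf{x}})^{\dim L(\mathbf{x})}$ over the odd ones, the parity of $L(\mathbf{x})$ being $(-1)^{g(\mathbf{x})}$ (by the footnote the dimensions are independent of the sign convention, so I may work in the standard super convention). Taking logarithms and extracting the coefficient of $\mathbf{s}^{\mathbf{x}}$, the sign bookkeeping collapses, after setting $A(\mathbf{x})=(-1)^{g(\mathbf{x})}\binom{|\mathbf{x}|}{x_1,\dots,x_r}$ and $H(\mathbf{x})=(-1)^{g(\mathbf{x})}|\mathbf{x}|\dim L(\mathbf{x})$, to the clean convolution $A(\mathbf{x})=\sum_{i\,\mid\,\gcd(x_1,\dots,x_r)}H(\mathbf{x}/i)$; Möbius inversion over the scaling divisors then gives precisely $\dim L(\mathbf{x})=d(\mathbf{x})$ as in~\eqref{eq3}. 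The delicate point throughout is keeping the parities straight so that the factors $(-1)^{g(\mathbf{x})}$ and $(-1)^{g(\mathbf{x})/i}$ emerge correctly; the residual combinatorics is routine Möbius inversion.
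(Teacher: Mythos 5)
Your argument is essentially the paper's: you decompose $w$ by multidegree, prove surjectivity of each piece by rewriting monomials as sums of brackets $[v,P_k]$ via the Jacobi identity (the paper does exactly this in its Claim~\ref{cla5}, by induction on the number of factors), and then compute the kernel dimension by rank--nullity using the dimension formula for the multigraded pieces $L_{x_1,\dots,x_r}$. The one point where you diverge is that the paper simply cites the super Witt formula $\dim L_{x_1,\dots,x_r}=d(x_1,\dots,x_r)$ as Theorem~\ref{thlie} from Petrogradsky, whereas you re-derive it from super PBW and M\"obius inversion; your sketch of that derivation is correct --- with the paper's sign convention an element of even degree has $[u,u]=0$ forced (polynomial factor in the Hilbert series) and an element of odd degree does not (exterior factor), and tracking this through $\log\bigl(1-\sum_k s_k\bigr)^{-1}=\sum_{\mathbf{x}}\eta(\mathbf{x})\dim L(\mathbf{x})\sum_i\eta(\mathbf{x})^i\mathbf{s}^{i\mathbf{x}}/i$ with $\eta(\mathbf{x})=(-1)^{g(\mathbf{x})}$ yields exactly your convolution $A(\mathbf{x})=\sum_{i\mid\gcd}H(\mathbf{x}/i)$ and hence \eqref{eq3}. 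So the proposal is complete modulo routine verification of that sign bookkeeping, which you correctly flagged as the only delicate step; citing the formula, as the paper does, is of course also sufficient.
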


\begin{proof}[Proof of Lemma~\ref{lker}(a)]
Take $t\in L^0_{m-3}$. Let us prove that $t$ belongs to the image of $w$. It suffices to consider the case when $t$ is a product of generators. Since $r>1$ and $t\in L^0$ it follows that $t$ itself is not a generator. Thus $t=[u,v]$ for some products $u$ and $v$ of generators. So assertion~(a) reduces to the following claim.
\end{proof}

\begin{claim}\label{cla5} Let $1\le i\le r$ and let $u$ be a product of generators $P_i,\dots,P_r$ (each of the generators may appear in the product several times). Then for any $v\in L$ there are $v_i,\dots,v_r\in L$ such that $[u,v]=[P_i,v_i]+\dots+[P_r,v_r]$.
\end{claim}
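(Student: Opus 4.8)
The plan is to recast the claim as the statement that a single fixed subspace of $L$ contains $[u,v]$, and then to run an induction on the length of the product $u$. Fix $i$ and set
\[ I := \sum_{j=i}^{r}[P_j,L] = \bigl\{\,[P_i,v_i]+\dots+[P_r,v_r] : v_i,\dots,v_r\in L\,\bigr\}\subseteq L. \]
Since $I$ is a sum of the images of the linear maps $v\mapsto[P_j,v]$, it is a linear subspace of $L$, and the claim is exactly the assertion that $[u,v]\in I$ for every $v\in L$ whenever $u$ is a product (i.e.\ an iterated bracket monomial) of the generators $P_i,\dots,P_r$. I would prove this by induction on the weight $n$ of $u$, that is, on the number of generator factors occurring in $u$, keeping $i$ fixed throughout.

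The base case $n=1$ is immediate: here $u=P_j$ for some $i\le j\le r$, so $[u,v]=[P_j,v]\in I$. For the inductive step, a product of weight $n\ge 2$ has the form $u=[a,b]$, where $a$ and $b$ are products of the generators $P_i,\dots,P_r$ of strictly smaller weight. Applying the graded Jacobi identity (axiom~(3)) to the triple $(a,b,v)$ expresses $[[a,b],v]$ as a $\pm1$-combination of the two remaining cyclic terms $[[b,v],a]$ and $[[v,a],b]$. By the symmetry axiom~(2) we have $[[b,v],a]=\pm[a,[b,v]]$ and $[[v,a],b]=\pm[b,[v,a]]$; since $a$ and $b$ have smaller weight, the induction hypothesis (applied with the arbitrary elements $[b,v]$ and $[v,a]$ of $L$) gives $[a,[b,v]]\in I$ and $[b,[v,a]]\in I$. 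As $I$ is a subspace, $[u,v]=[[a,b],v]\in I$, which closes the induction.

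The reason the argument is clean is that no sign bookkeeping is required: because $I$ is a linear subspace, all the $\pm1$ coefficients produced by axioms~(2) and~(3) are harmless, merely rescaling the (arbitrary) elements $v_j$. The only structural point one must check — used implicitly above — is that decomposing a product of $P_i,\dots,P_r$ as $[a,b]$ yields factors $a,b$ that are again products of generators drawn from $\{P_i,\dots,P_r\}$ and of smaller weight; this is immediate from the recursive definition of bracket monomials. I therefore do not anticipate any real obstacle; the single point deserving care is simply that the Jacobi identity permits solving for any one of its three cyclic terms precisely because its coefficients are units.
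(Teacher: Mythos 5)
Your proof is correct and follows essentially the same route as the paper's: induction on the number of generator factors in $u$, writing $u=[a,b]$ and using the symmetry and Jacobi axioms to reduce $[[a,b],v]$ to $[a,[b,v]]$ and $[b,[v,a]]$, to which the inductive hypothesis applies. Your packaging of the target as membership in the subspace $I=\sum_{j=i}^r[P_j,L]$ is a clean way to dispose of the sign bookkeeping that the paper carries explicitly, but it is a cosmetic rather than structural difference.
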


\begin{proof}[Proof of Claim~\ref{cla5}]
Let us prove the claim by induction over the number of factors in the product $u$.
If there is only one factor then $u$ is a generator itself, and there is nothing to prove. Otherwise $u=[u_1,u_2]$ for some products $u_1,u_2\in L$ containing fewer factors than $u$. By the symmetry and the Jacobi identity we get $[u,v]=\pm[u_1,[u_2,v]]\pm[u_2,[u_1,v]]$.
Apply the inductive hypothesis for $u':=u_1$ and $v':=[u_2,v]$. Since $u'$ contains fewer factors than $u$
it follows that $[u_1,[u_2,v]]=[u',v']=[P_i,v'_i]+\dots+[P_r,v'_r]$
for some $v'_i,\dots,v'_r\in L$.
Analogously, since $u_2$ contains fewer factors than $u$
it follows that $[u_2,[u_1,v]]=[P_i,v''_i]+\dots+[P_r,v''_r]$
for some $v''_i,\dots,v''_r\in L$.
Thus $[u,v]=\pm[u_1,[u_2,v]]\pm[u_2,[u_1,v]]=[P_i,\pm v'_i\pm v''_i]+\dots+[P_r,\pm v'_r\pm v''_r]$,
which proves the claim.
\end{proof}


\begin{proof}[Proof of Lemma~\ref{lem:Hae-splits}]
This follows directly from Lemma~\ref{lker}.a and Theorem~\ref{th-structure}.
\end{proof}

We say that an element $t\in L$ has {\it multidegree} $(x_1,\dots,x_r)$ if $t$ is a product of generators in which the generator $P_k$ appears exactly $x_k$ times
for each $k=1,\dots,r$. The multidegree does not depend on the choice of representation as the product because axioms~(2) and~(3) above are relations between elements of the same multidegree. The element $t$ of multidegree $(x_1,\dots,x_r)$ has degree $g(x_1,\dots,x_r)$ given by formula~(\ref{eq2}). Denote by $L_{x_1,\dots,x_r}$ the linear subspace of $L$ spanned by elements of multidegree $(x_1,\dots,x_r)$.

\begin{theorem}\label{thlie} \textup{(See \cite[Corollary~1.1(3)]{Pet03})}
The dimension $d(x_1,\dots,x_r)$ of the space
$L_{x_1,\dots,x_r}$ is given by the formula~\textup{(\ref{eq3})}.
\end{theorem}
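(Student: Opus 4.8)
The plan is to derive formula~\eqref{eq3} from the Poincaré--Birkhoff--Witt theorem for Lie superalgebras followed by a Möbius inversion; this reproduces Petrogradsky's computation \cite[Corollary~1.1(3)]{Pet03} in the notation used here. The first and conceptually essential step is to pin down the correct \emph{super-parity}. By the footnote, after the substitution $[x,y]\mapsto(-1)^{\deg x}[x,y]$ the bracket becomes a Lie superalgebra in the usual algebraic sense whose parity is the degree modulo $2$; equivalently, the self-bracket $[P_k,P_k]$ vanishes exactly when $\deg P_k=m-p_k-2$ is even, so $P_k$ is super-even when $m-p_k$ is even and super-odd when $m-p_k$ is odd. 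Since parity is additive, every element of multidegree $(x_1,\dots,x_r)$ has parity $g(x_1,\dots,x_r)\bmod 2$, and hence each multidegree component $L_{x_1,\dots,x_r}$ is purely even or purely odd. This single fact is what allows the whole computation to be run with one signed generating function rather than separate even/odd bookkeeping. Throughout write $N=x_1+\dots+x_r$, $M(\beta)$ for the multinomial coefficient $N(\beta)!/\prod_k (y_k!)$ of a multidegree $\beta=(y_1,\dots,y_r)$, $T^\beta=\prod_k t_k^{y_k}$, and $D(\beta)=\dim L_\beta$ (the quantity called $d(x_1,\dots,x_r)$ in the theorem).

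Next I would compute the multigraded generating function in which every homogeneous vector is weighted by $(-1)^{\text{parity}}\,T^{\text{multidegree}}$, for the enveloping algebra $U(L)$, in two ways. On one hand $U(L)$ is the tensor algebra on $P_1,\dots,P_r$, so its multidegree-$\alpha$ component has dimension $M(\alpha)$; setting $s_k:=(-1)^{m-p_k}t_k$ so that $s^\alpha=(-1)^{g(\alpha)}T^\alpha$ collapses the series to $1/(1-(s_1+\dots+s_r))$. On the other hand, PBW over $\Q$ gives a multigraded isomorphism $U(L)\cong S(L_{\mathrm{even}})\tensor\Lambda(L_{\mathrm{odd}})$, so the same series factors as $\prod_\beta(1-T^\beta)^{-\epsilon(\beta)D(\beta)}$, where $\epsilon(\beta)=(-1)^{g(\beta)}$: an even basis vector contributes a polynomial factor $(1-T^\beta)^{-1}$ and an odd one an exterior factor $(1-T^\beta)$.

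I would then take logarithms of the resulting identity
\[ \frac{1}{1-(s_1+\dots+s_r)}=\prod_\beta(1-T^\beta)^{-\epsilon(\beta)D(\beta)}, \]
expand both sides, and compare the coefficient of $s^\alpha$ for a fixed $\alpha=(x_1,\dots,x_r)$. Using $T^{j\beta}=\epsilon(\beta)^{j}s^{j\beta}$ on the right, this produces the divisor identity
\[ \frac{1}{N}\,\frac{N!}{x_1!\cdots x_r!}=\sum_{j\,\mid\,\gcd(x_1,\dots,x_r)}\frac{(-1)^{(j+1)g/j}}{j}\,D\!\left(\tfrac{x_1}{j},\dots,\tfrac{x_r}{j}\right),\qquad g:=g(x_1,\dots,x_r). \]
Finally I would invert this along each ray $\alpha=n\gamma$ with $\gamma$ primitive: writing $g=n\,g(\gamma)$ and $d=n/j$, the exponent simplifies via $(j+1)(n/j)g(\gamma)\equiv(n+d)g(\gamma)\pmod 2$, so the sign splits as $(-1)^{n g(\gamma)}(-1)^{d g(\gamma)}$. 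Setting $h(d)=d\,(-1)^{d g(\gamma)}D(d\gamma)$ and $\Phi(n)=(-1)^{n g(\gamma)}M(n\gamma)/N(\gamma)$, the identity becomes the plain Dirichlet relation $\Phi(n)=\sum_{d\mid n}h(d)$, and ordinary Möbius inversion $h(n)=\sum_{d\mid n}\mu(n/d)\Phi(d)$ yields precisely formula~\eqref{eq3} after substituting $i=n/d$ and $g/i=(n/i)g(\gamma)$.

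The main obstacle is the sign bookkeeping, and it has two parts. First one must correctly identify the PBW-parity as $g\bmod 2$ (the \emph{degree}, not degree-plus-length) under the paper's topological sign convention; getting this wrong shifts every sign by a factor of $(-1)^{N}$ and destroys the match with~\eqref{eq3}. Second, the apparently inseparable signs $(-1)^{(j+1)g/j}$ in the divisor identity must be shown to factor multiplicatively along each ray into $(-1)^{d g(\gamma)}$-type terms, for only then is the relation a genuine Dirichlet convolution to which Möbius inversion applies. Once these two points are settled the argument is the classical generalized Witt formula with the super-sign $(-1)^{g/i}$ inserted, and it agrees with \cite[Corollary~1.1(3)]{Pet03}.
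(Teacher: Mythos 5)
Your derivation is correct, but note that there is nothing in the paper to compare it against: Theorem~\ref{thlie} is quoted without proof from Petrogradsky \cite[Corollary~1.1(3)]{Pet03}. What you have written is the standard generating-function proof of the super-Witt formula, and it is essentially the argument of the cited source, so you have supplied the missing derivation rather than found an alternative route. The two delicate points you single out are indeed the right ones, and you resolve both correctly: the footnote's substitution $[x,y]\mapsto(-1)^{\deg x}[x,y]$ turns the paper's symmetry axiom $[u,v]=(-1)^{(i+1)(j+1)}[v,u]$ into the standard super-antisymmetry $[u,v]=-(-1)^{ij}[v,u]$, so the PBW parity is the degree $g$ modulo $2$ (equivalently, $P_k$ is odd iff $m-p_k$ is odd) exactly as you claim; and the identity $(j+1)\,g/j=ng(\gamma)+dg(\gamma)$ for $n=jd$ really does split the sign into a ray-constant factor times a multiplicative one, so the relation $\Phi(n)=\sum_{d\mid n}h(d)$ is a legitimate input to M\"obius inversion. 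The remaining ingredients --- that the universal enveloping algebra of the free Lie superalgebra on $P_1,\dots,P_r$ is the tensor algebra, the PBW factorization $U(L)\cong S(L_{\mathrm{even}})\otimes\Lambda(L_{\mathrm{odd}})$ over $\Q$, and the coefficient extraction after taking logarithms --- are standard and correctly executed, and the final substitution $i=n/d$ reproduces formula~(\ref{eq3}) exactly, including the outer factor $(-1)^{g}/(x_1+\dots+x_r)$. One can sanity-check the signs on the one-generator case: for $\deg P_1$ odd the formula gives $d(2)=1$ and $d(3)=0$, matching $L=\langle P_1\rangle\oplus\langle[P_1,P_1]\rangle$, which confirms that your parity convention is the one the paper intends.
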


\begin{proof}[Proof of Lemma~\ref{lker}(b)] By part (a) we have
$
\dim\kernel w = \dim L^{1}_{p_1-1} +\dots+ \dim L^{b}_{p_r-1} - \dim L^{0}_{m-3}
$.
The space $L^i_g$ is a direct sum of all the spaces $L_{x_1,\dots,x_r}$ such that $g(x_1,\dots,x_r)=g$ and $x_k\ge 1$ for each $k\ne i$.
By Theorem~\ref{thlie} the lemma follows.
\end{proof}

\begin{proof}[Proof of Theorem~\ref{thm-general}] By Lemma~\ref{lker}.a the sequence of Theorem~\ref{thhae} splits.
Hence there is an isomorphism $E^m_{\mathrm{B}}(\sqcup_{k=1}^r S^{p_k}) \otimes\mathbb{Q} \cong \kernel w$. Thus formula~(\ref{eq1}) 
follows from Lemma~\ref{lker}(b).
Formula~(\ref{eq1b}) follows from~(\ref{eq1}), Theorems~\ref{th-structure} and \ref{th1}, and
the computation
$m(t,0,\dots,0)=
\delta_{2t,5-(-1)^{m-p}}$.
\end{proof}

\begin{proof}[Proof of Theorem~\ref{th2}]
Let $S$ be a subset of $\{1,\dots,r\}$. Denote by $L(S)$ the subalgebra of $L$ generated by the generators $P_k$ such that $k\in S$. Then $L_g\cong\oplus_S L^0_g(S)$, where the sum is over all the subsets $S\subset\{1,\dots,r\}$, and $L_g\cong\oplus_{S\ni i} L^i_g(S)$.

By Lemma~\ref{lker}(a) and the proof of Theorem~\ref{thm-general} it follows that for a set $S$ with more than one element
$$
\rank E^m_{\mathrm{B}}(\sqcup_{k\in S} S^{p_k}) =
\sum_{k\in S}\dim L^{k}_{p_k-1}(S) - \dim L^{0}_{m-3}(S).
$$
Summing these formulas over all the nonempty subsets $S\subset\{1,\dots,r\}$ and using the result
$\rank E^m(S^{p_k}) =c_{p_k,m}$ \cite[Corollary~6.7]{Hae66A},
we get: 
$$
\rank E^m(\sqcup_{k=1}^r S^{p_k})=
\sum_{k=1}^{r}\left(\dim L_{p_k-1} - \dim L_{p_k-1}(\{k\})+\dim L_{m-3}(\{k\})+c_{p_k,m}\right)- \dim L_{m-3}.
$$
To see that this equals the expression from Theorem~\ref{th2}, use the formula $w_{t-1,m-p}(1)-w_{t,m-p}(1)=\delta_{2t,5-(-1)^{m-p}}$
and the following well-known result.
\end{proof}

\begin{theorem}[Witt's formula] \textup{(See \cite{Pet03})} Suppose that all $r$ generators of the free Lie superalgebra $L$ have the same degree $s$; then $\dim L_{ts}=w_{t,s}(r)$.
\end{theorem}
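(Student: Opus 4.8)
The final statement is Witt's formula for graded Lie superalgebras: when all $r$ generators have the same degree $s$, then $\dim L_{ts} = w_{t,s}(r)$, where $w_{t,s}(r)$ is the polynomial defined in~\eqref{eq5}.

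My plan is to deduce this directly from Theorem~\ref{thlie} by specializing the multidegree formula~\eqref{eq3}. First I would observe that when all generators share the degree $s$, the subspace of $L_{ts}$ spanned by products using each generator a prescribed number of times is exactly $L_{x_1,\dots,x_r}$ with $x_1+\dots+x_r = t$ (since the total degree is $s\cdot(x_1+\dots+x_r)$, this equals $ts$ precisely when the multidegrees sum to $t$). Hence
\[
\dim L_{ts} = \sum_{\substack{x_1,\dots,x_r\ge 0\\ x_1+\dots+x_r=t}} d(x_1,\dots,x_r).
\]
Next I would substitute the explicit formula~\eqref{eq3} for $d(x_1,\dots,x_r)$ and interchange the order of summation, pulling the divisor sum over $i\mid\gcd(x_1,\dots,x_r)$ to the outside. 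For each fixed $i$, setting $x_k = i y_k$ reindexes the inner sum over compositions of $t/i$ into $r$ nonnegative parts, and the multinomial coefficients $\bigl((x_1+\dots+x_r)/i\bigr)!/\prod_k (x_k/i)!$ sum to $r^{t/i}$ by the multinomial theorem.

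The arithmetic subtlety I would watch carefully is the sign bookkeeping. With all generators of degree $s$, the degree function becomes $g(x_1,\dots,x_r) = (s-?)\cdot$ — more precisely one must track whether $s$ is even or odd, since the signs $(-1)^{g}$ and $(-1)^{g/i}$ in~\eqref{eq3} depend on the parity of $s$ times the relevant partial sums. When $s$ is even all these signs are trivial and the computation collapses immediately to $w_t(r) = \frac{1}{t}\sum_{i\mid t}\mu(i)r^{t/i}$, recovering the classical Witt formula. When $s$ is odd the signs $(-1)^{g}$ and $(-1)^{g/i}$ are genuinely present, and the delicate case is $i$ even with $t/i$ odd, where the relative sign $(-1)^{g(1-1/i)}$ does not cancel; isolating exactly these terms is what produces the correction term $w_{t/2}(r)$ in the definition of $w_{t,s}(r)$ for $s$ odd and $t\equiv 2\pmod 4$.

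The main obstacle will therefore be the careful parity analysis that matches the divisor-sum signs to the two-case definition in~\eqref{eq5}, rather than the combinatorics, which is a routine application of the multinomial theorem and Möbius inversion. I would organize this by treating $s$ even and $s$ odd separately, and within the odd case reducing $\pmod 4$ to identify precisely when the even divisors $i$ contribute with a nontrivial sign, thereby assembling the extra $w_{t/2}(r)$ summand exactly in the stated range.
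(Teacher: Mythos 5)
Your proposal is correct, but it takes a different route from the paper in the trivial sense that the paper offers no proof at all: it simply cites Petrogradsky \cite{Pet03} for Witt's formula, just as it cites the same source for Theorem~\ref{thlie}. What you do instead is derive the equal-degree Witt formula from the multigraded one, and this derivation is sound. Since every generator has degree $s$, one has $g(x_1,\dots,x_r)=s(x_1+\dots+x_r)$, so $L_{ts}=\bigoplus_{x_1+\dots+x_r=t}L_{x_1,\dots,x_r}$; summing~\eqref{eq3}, interchanging with the divisor sum, substituting $x_k=iy_k$ and applying the multinomial theorem gives $\dim L_{ts}=\frac{1}{t}\sum_{i\mid t}\mu(i)(-1)^{st+st/i}r^{t/i}$. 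Your parity analysis is exactly right: the sign $(-1)^{s(t+t/i)}$ is $+1$ unless $s$ is odd and exactly one of $t$, $t/i$ is odd, which for squarefree $i$ forces $t\equiv 2\pmod 4$ and $i$ even; writing $i=2j$ with $j\mid t/2$ and using $\mu(2j)=-\mu(j)$ converts the even-divisor contribution into $+\,w_{t/2}(r)$, reproducing the two-case definition~\eqref{eq5} verbatim. The only blemish is the unfinished placeholder ``$g(x_1,\dots,x_r)=(s-?)\cdot$'' in your write-up, but the subsequent analysis makes clear you are tracking the correct quantity. What your approach buys is a self-contained reduction of the stated theorem to Theorem~\ref{thlie}, so that only one external input from \cite{Pet03} is needed rather than two; what the paper's citation buys is brevity.
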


Corollary~\ref{cor-easy} follows easily from Theorem~\ref{th2}; see Section~\ref{ssec-fin-proofs} for the details.

The following claim will be used in the sequel.


\begin{claim}\label{cla8} If $r>2$ and $x_1=1$ then
$m(x_1,\dots,x_r)>0$. \end{claim}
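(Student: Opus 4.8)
The plan is to reinterpret $m(x_1,\dots,x_r)$ as the dimension of a kernel and then to exhibit a single nonzero element of that kernel. Recall from the proofs of Theorem~\ref{thhae} and Lemma~\ref{lker} that $w$ preserves multidegree and is surjective (Claim~\ref{cla5}); hence in each multidegree it restricts to a surjection
\[ w\colon \bigoplus_{k=1}^r (L^{k})_{(x_1,\dots,x_k-1,\dots,x_r)}\to (L^{0})_{(x_1,\dots,x_r)}. \]
When $x_1,\dots,x_r\ge 1$ every monomial of multidegree $(x_1,\dots,x_k-1,\dots,x_r)$ already contains each generator except possibly $P_k$, so $(L^{k})_{(x_1,\dots,x_k-1,\dots,x_r)}=L_{(x_1,\dots,x_k-1,\dots,x_r)}$ has dimension $d(x_1,\dots,x_k-1,\dots,x_r)$, and similarly the target has dimension $d(x_1,\dots,x_r)$. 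Comparing with~\eqref{eq4} and using surjectivity, this identifies $m(x_1,\dots,x_r)=\dim\kernel(w|_{\text{multidegree }(x_1,\dots,x_r)})$. Thus it suffices to produce one nonzero element of this kernel in multidegree $(1,x_2,\dots,x_r)$.

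First I would pick a nonzero $u\in (L^{1})_{(0,x_2,\dots,x_r)}$; this space is the degree $(x_2,\dots,x_r)$ component of the free Lie superalgebra on the $r-1\ge 2$ generators $P_2,\dots,P_r$, and it is nonzero (see the obstacle below). Since $u$ is a $\Q$-linear combination of products of $P_2,\dots,P_r$, Claim~\ref{cla5}, applied with $i=2$ and $v=P_1$ and summed over the monomials of $u$, yields $v_2,\dots,v_r\in L$ with $[u,P_1]=[P_2,v_2]+\dots+[P_r,v_r]$. Rewriting each $[P_k,v_k]$ as $\pm[v_k,P_k]$ by the symmetry axiom and transposing, I obtain a relation $w(u,w_2,\dots,w_r)=0$ with $w_k=\mp v_k$. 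As the identity is homogeneous in multidegree, I may replace each $w_k$ by its component of multidegree $(1,x_2,\dots,x_k-1,\dots,x_r)$, which lies in $(L^{k})_{(1,x_2,\dots,x_k-1,\dots,x_r)}$. The tuple $(u,w_2,\dots,w_r)$ is then a kernel element whose first coordinate $u$ is nonzero, so $m(1,x_2,\dots,x_r)=\dim\kernel w>0$.

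The hard part will be the nonvanishing of $d(0,x_2,\dots,x_r)$: that the multidegree $(x_2,\dots,x_r)$ component (all exponents positive) of a free Lie superalgebra on at least two generators is nonzero. I would prove this by a leading-term argument in the universal enveloping (tensor) superalgebra $T(P_2,\dots,P_r)\supset L$: ordering the generators and taking the super-Lyndon basis element attached to the Lyndon word $P_2^{x_2}\cdots P_r^{x_r}$, one checks via super-PBW that its coefficient on that word is $\pm 1$, whence it is nonzero. Alternatively one can argue directly from Theorem~\ref{thlie} that in~\eqref{eq3} the leading multinomial term dominates the M\"obius corrections, giving $d(0,x_2,\dots,x_r)>0$. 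Once this is in hand, the rest is just bookkeeping of signs and multidegrees, which the multidegree-homogeneity of all the identities involved keeps fully under control.
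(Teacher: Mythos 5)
Your argument is essentially the paper's own proof: both identify $m(1,x_2,\dots,x_r)$ with the kernel dimension of the multidegree-restricted surjection $w$ and then exhibit a kernel element whose first coordinate is a nonzero element of multidegree $(0,x_2,\dots,x_r)$, using Claim~\ref{cla5} to absorb $[u,P_1]$ into the remaining coordinates (the paper phrases this via the Jacobi identity applied to $[P_1,[u,v]]$, which amounts to the same computation). The only point you treat more carefully than the paper is the nonvanishing of $d(0,x_2,\dots,x_r)$, which the paper silently assumes when it ``takes'' a nonzero $[u,v]$ of that multidegree; your sketch of that fact is sound.
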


\begin{proof} Analogously to Lemma~\ref{lker} we see that $m(x_1,\dots,x_r)$ is the dimension of the kernel of the restriction of the previous map
$$
w\colon L_{x_1-1,x_2,\dots,x_r}\oplus\dots\oplus L_{x_1,x_2,\dots,x_r-1}\to L_{x_1,x_2,\dots,x_r}.
$$
It suffices to prove that the map is not injective. Take products $u$ and $v$ of generators such that the element $[u,v]$ is nonzero and has multidegree $(0,x_2,\dots,x_r)$. The map $w$ is not injective because
$$
0=[P_1,[u,v]]\pm [u,[P_1,v]]\pm [v,[P_1,u]]=
[P_1,[u,v]]+[P_2,u_2]+\dots+[P_r,u_r]=w([u,v],u_2,\dots,u_r)
$$
for some choice of signs and elements $u_2,\dots,u_r\in L$. Here the first equality is the Jacobi identity. The second equality follows from Claim~\ref{cla5} because
$u$ and $v$ do not contain the generator $P_1$.
\end{proof}

\subsection{Finiteness criteria}\label{ssec-fin-proofs}

In this subsection we prove Theorems~\ref{thm-1a} and~\ref{thm-1b}. For this we solve the elementary problem of determining when formula~(\ref{eq4}) gives zero.
Formally, Theorems~\ref{thm-1a} and~\ref{thm-1b} follow from assertions~\ref{thm-general}, \ref{lalgb}, and~\ref{lalg}.


%

To explain the idea of the proof, let us prove Corollary~\ref{cor-easy} first.

\begin{proof}[Proof of Corollary~\ref{cor-easy}]
We have
\begin{multline*}
\rank E^{m}(S^{m-3}\sqcup S^{m-3})=
2w_{m-4,3}(2)-w_{m-3,3}(2)+2c_{m-3,m}-2\delta_{2m-6,6}\ge\\
2w_{m-4}(2)-w_{m-3}(2)-w_{(m-3)/2}(2)-2\ge
\frac{2^{m-3}}{(m-4)(m-3)}-\frac{2^{m/2}m}{m-3}-2\to\infty
\end{multline*}
as $m\to\infty$. Here the first equality follows from Theorem~\ref{th2}. The second inequality follows from the definitions in Section~\ref{ssec-ranks}. The third inequality follows from the observation that sum~(\ref{eq5}) contains at most $t$ summands, each being at most $2^{t/2}$ except the one corresponding to $i=1$.  \end{proof}

\begin{lemma}\label{lalgb}
For $r>2$ we have $m(x_1,\dots,x_r)>0$.
\end{lemma}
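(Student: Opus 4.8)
The plan is to follow the strategy behind Claim~\ref{cla8}: reinterpret $m(x_1,\dots,x_r)$ as the dimension of the kernel of a bracket map, reduce the lemma to the strict positivity of that dimension, dispose of the case ``some $x_j=1$'' by Claim~\ref{cla8} itself, and handle the remaining case directly from the dimension formula of Theorem~\ref{thlie}.

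First I would reinterpret $m(x_1,\dots,x_r)$ exactly as in the proof of Claim~\ref{cla8}: it is the dimension of the kernel of the restricted bracket map
\[
w\colon L_{x_1-1,x_2,\dots,x_r}\oplus\dots\oplus L_{x_1,\dots,x_r-1}\to L_{x_1,\dots,x_r},\qquad (u_1,\dots,u_r)\mapsto [P_1,u_1]+\dots+[P_r,u_r].
\]
Since $r>2$ and all $x_k\ge1$, every element of $L_{x_1,\dots,x_r}$ is a product of $n:=x_1+\dots+x_r\ge r\ge3$ generators, hence a nontrivial bracket, so $w$ is surjective by Claim~\ref{cla5} (exactly as in Lemma~\ref{lker}(a)). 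Thus $m(x_1,\dots,x_r)=\sum_{k=1}^r d(x_1,\dots,x_k-1,\dots,x_r)-d(x_1,\dots,x_r)$, and the lemma is equivalent to the strict inequality $\sum_{k=1}^r d(x_1,\dots,x_k-1,\dots,x_r)>d(x_1,\dots,x_r)$, i.e. to the statement that the images of the maps $u\mapsto[P_k,u]$ do not form a direct sum in $L_{x_1,\dots,x_r}$.

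If some $x_j=1$ this is precisely Claim~\ref{cla8} (after permuting the generators): there the Jacobi identity produces a kernel element whose $j$-th component is a prescribed nonzero bracket avoiding $P_j$, so the kernel is nonzero. The difficulty, and the main obstacle, is the complementary case in which every $x_k\ge2$. Here the Jacobi construction of Claim~\ref{cla8} genuinely breaks down: any relation $\sum_k[P_k,u_k]=0$ must use $x_k\ge2$ copies of each $P_k$ inside the $u_k$, so Claim~\ref{cla5} can no longer move a bracket into the span of the remaining generators without reintroducing the very generator one is trying to isolate. I therefore expect to abandon the bracket construction in this case and argue directly from formula~(\ref{eq3}).

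For the case $x_k\ge2$ I would isolate the leading term $i=1$ of~(\ref{eq3}). The $i=1$ term of $d(x_1,\dots,x_k-1,\dots,x_r)$ is $\frac1{n-1}\binom{n-1}{x_1,\dots,x_k-1,\dots,x_r}$, with no sign; summing over $k$ and using $\sum_k\binom{n-1}{x_1,\dots,x_k-1,\dots,x_r}=\binom{n}{x_1,\dots,x_r}$ gives $\frac1{n-1}\binom{n}{x_1,\dots,x_r}$, whereas the $i=1$ term of $d(x_1,\dots,x_r)$ is $\frac1n\binom{n}{x_1,\dots,x_r}$. Hence the two sides differ in leading order by $\frac1{n(n-1)}\binom{n}{x_1,\dots,x_r}>0$, and the super-signs appear only in the subleading terms indexed by $i\ge2$, each of absolute value at most a multinomial coefficient $\binom{n/i}{x_1/i,\dots,x_r/i}$ of strictly smaller argument. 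The plan is to bound the total of these $i\ge2$ corrections (finitely many per $d(\cdot)$, each exponentially smaller than $\binom{n}{x_1,\dots,x_r}$) and to check that the positive gap $\frac1{n(n-1)}\binom{n}{x_1,\dots,x_r}$ dominates them for all $n$ beyond an explicit bound, the finitely many remaining small multidegrees being verified directly (or by the tabulated computation of~\cite{Sko11D}). Combined with the case $x_j=1$ handled by Claim~\ref{cla8} and the identification $m(x_1,\dots,x_r)=\dim\ker w$, this yields $m(x_1,\dots,x_r)>0$. The crux, where the real work lies, is making this dominance estimate uniform in $n$ and cleanly disposing of the low-degree exceptions.
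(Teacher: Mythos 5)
Your proposal follows essentially the same route as the paper: the case where some $x_j=1$ is delegated to Claim~\ref{cla8}, and for all $x_k\ge 2$ positivity is extracted from the $i=1$ terms of formula~(\ref{eq3}), whose net contribution $\binom{n}{x_1,\dots,x_r}/n(n-1)$ (with $n=x_1+\dots+x_r$) must dominate the $i\ge 2$ corrections bounded by multinomial coefficients with halved arguments --- this is precisely the content of the paper's Claim~\ref{cla2}. The uniform dominance estimate you defer is exactly what the paper supplies in Claims~\ref{cla6}--\ref{cla7}, via Stirling's formula and the divisor bound of Claim~\ref{cla3}, and it holds for all $x_1\ge 2$, $r>2$ with no residual finite check; so your plan is sound, but the quantitative step you flag as ``the real work'' is indeed where all the effort of the paper's proof lies.
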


\begin{proof}[Proof of Lemma~\ref{lalgb}]
Without loss of generality assume that $x_1\le\dots\le x_r$.

{\it Case $x_1=1$} was proved in Claim~\ref{cla8} above.

{\it Case $x_1\ge 2$.} In this case we need a rather precise asymptotic estimate for the multiplicity $m(x_1,\dots,x_r)$. Denote by $n(x)$ the total number of those divisors of $x$ and $x-1$, which are free of squares and greater than $1$. Denote by $[x]$ the integral part of $x$.

\begin{claim}\label{cla2}
$m(x_1,\dots,x_r)\ge \frac{1}{(x_1+\dots+x_r)(x_1+\dots+x_r-1)}
\left(\binom{x_1+\dots+x_r}{x_1, \dots, x_r}-
n(x_1)\cdot(x_1+\dots+x_r)
\binom{[(x_1+\dots+x_r)/2]}{[x_1/2],\dots,[x_r/2]}\right).$
\end{claim}

\begin{proof}
For clarity set $r=2$; the general case is analogous. Let us estimate the number $m(x,y)=d(x-1,y)+d(x,y-1)-d(x,y)$. Substitute the given $3$ terms by the expressions from formula~(\ref{eq3}) in Theorem~\ref{thm-general}.
We obtain an algebraic sum of binomial coefficients divided by either $(x+y-1)$ or $(x+y)$:
$$
m(x,y)=\frac{\binom{x+y-1}{x-1}}{x+y-1}+\frac{\binom{x+y-1}{x}}{x+y-1}-\frac{\binom{x+y}{x}}{x+y}\pm\dots
$$
The algebraic sum of the first $3$ terms in this formula is equal to $\binom{x+y}{x}/(x+y-1)(x+y)$. Consider the remaining terms.
Their number is at most $n(x)$.
Let us estimate each one. By geometric interpretation of binomial coefficients it follows that $\binom{[(x+y)/k]}{[x/k]}\le\binom{[(x+y)/2]}{[x/2]}$
for $k\ge2$. Thus the value $\binom{[(x+y)/{2}]}{[{x}/{2}]}/(x+y-1)$ is not less than the absolute value of each term.
Thus Claim~\ref{cla2} follows.
\end{proof}

\begin{claim}\label{cla6} For $2\le x_1\le\dots \le x_r$ we have
$$
\left|\frac{m(x_1,\dots,x_r)}{M(x_1,\dots,x_r)}-1\right|<\epsilon(x_1,r),
$$
where $M(x_1,\dots,x_r)=\frac{(x_1+\dots+x_r-2)!}{x_1!\cdots x_r!}$
and
$\epsilon(x_1,r)=n(x_1)x_1{2}^{(r-1)/2}e^{r/9x_1}r^{1-rx_1/2}$.
\end{claim}

\begin{proof}
This follows from the estimates:
\begin{multline*}
\left|\frac{m(x_1,\dots,x_r)}{M(x_1,\dots,x_r)}-1\right| < n(x_1)(x_1+\dots+x_r)\binom{[{(x_1+\dots+x_r)}/{2}]}{[{x_1}/{2}],\dots,[{x_r}/{2}]}\binom{x_1+\dots+x_r}{x_1,\dots,x_r}^{-1}
< \\
n(x_1)(x_1+\dots+x_r){2}^{(r-1)/2}e^{r/9x_1}\prod_{i=1}^{r}\left(\frac{x_i}{x_1+\dots+x_r}\right)^{x_i/2} <n(x_1)x_1{2}^{(r-1)/2}e^{r/9x_1}r^{1-rx_1/2}=\epsilon(x_1,r).
\end{multline*}

Here the first inequality follows from Claim~\ref{cla2}.

The second inequality follows from the Stirling formula $x!=\sqrt{2\pi x}\left({x}/{e}\right)^x e^{\theta(x)/12x}$, where $0<\theta(x)<1$.

The third inequality follows from the assumption $2\le x_1\le \dots\le x_r$ and the monotonicity of the third expression in each variable $x_2,\dots,x_r$. To check the monotonicity, denote the expression by $f$. Then
$$
\frac{\partial\ln f}{\partial x_k}=
\frac{1}{x_1+\dots+x_r}-\frac{1}{2}\ln\frac{x_1+\dots+x_r}{x_k}\le
\frac{1}{2+x_k}-\frac{1}{2}\ln(1+\frac{2}{x_k})<0.
$$
\end{proof}

\begin{claim}\label{cla3} For $x>1$ we have $n(x)\le 2\sqrt{x}$. 
\end{claim}

\begin{proof}
For each divisor $d$ of $x$ we have either $d\le \sqrt{x}$ or $x/d\le\sqrt{x}$. Each integer $d=2,3,\dots,[\sqrt{x}]$ divides no more than one of the numbers $x$,
$x-1$. So these two numbers have at most $2\sqrt{x}+1$ positive divisors including $1$. The required inequality is proved.
\end{proof}

\begin{claim}\label{cla7} Assume that $r>2$ and $x_1>1$, or $r=2$ and $x_1>4$. Then $\epsilon(x_1,r)<1$.
\end{claim}

\begin{proof}
First note that $\epsilon(x,r)$ is a decreasing function in $r$ (for the values of the variables under consideration) because
$$
\frac{\partial \ln\epsilon(x,r)}{\partial r}=
\frac{1}{r}+\frac{1}{9x}+\frac{\ln 2}{2}-\frac{x}{2}(\ln r+1)<
\frac{1}{2}+\frac{1}{18}+\frac{\ln 2}{2}-\ln 2-1<0.
$$
So $\epsilon(x,r)\le \epsilon(x,3)<1$
for $x=2,3,4$. For $x>4$ the function $\epsilon(x,r)2\sqrt{x}/n(x)$ is similarly a decreasing function in~$x$. Thus by Claim~\ref{cla3} we have
$\epsilon(x,r)\le\epsilon(x,r)2\sqrt{x}/n(x)\le \epsilon(5,2)\sqrt{5}<1$.
\end{proof}

Now we can conclude the proof of Lemma~\ref{lalgb}.
By the assumptions $x_1>1$, $r>2$, and Claim~\ref{cla7} it follows that $\epsilon(x_1,r)<1$. Then by Claim~\ref{cla6} it follows that $m(x_1,\dots,x_r)>0$.
Lemma~\ref{lalgb} and hence Theorem~\ref{thm-1a} are proved.
\end{proof}

Let us proceed to $2$-component links.

\begin{lemma}\label{lalg} We have $m(x,y)>0$, if $(x,y)\in FCS (m-p_1,m-p_2)$, and $m(x,y)=0$, otherwise.
\end{lemma}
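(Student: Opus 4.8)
The plan is to compute $m(x,y)$ directly from the Witt formula~\eqref{eq3} and to match the outcome with Table~\ref{formal} case by case. First, exactly as in the proof of Claim~\ref{cla8}, the number $m(x,y)$ is the dimension of the kernel of the restricted Whitehead map $w\colon L_{x-1,y}\oplus L_{x,y-1}\to L_{x,y}$, $w(u,v)=[u,P_1]+[v,P_2]$. By Lemma~\ref{lker}(a) this map is surjective, so
\[ m(x,y)=\dim\kernel w=d(x-1,y)+d(x,y-1)-d(x,y)\ge 0, \]
and it remains only to decide for which $(x,y)$ this difference is strictly positive. In~\eqref{eq3} the exponents $g$ and $g/i$ enter solely through the signs $(-1)^{g}$ and $(-1)^{g/i}$; hence $m(x,y)$ depends on $m,p_1,p_2$ only through the parities of $i=m-p_1$ and $j=m-p_2$, which is precisely why $FCS(i,j)$ depends only on these parities. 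Moreover, interchanging the two generators induces an isomorphism of free Lie superalgebras carrying multidegree $(x,y)$ to $(y,x)$; hence the value of $m(x,y)$ for parities $(i,j)$ equals the value of $m(y,x)$ for parities $(j,i)$. This reduces the verification to the three columns of the table, the case $i$ even, $j$ odd being the reflection of the case $i$ odd, $j$ even.

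For the interior region $\min(x,y)\ge 5$ I reuse the estimates already proved. Assuming $x\le y$ and applying Claims~\ref{cla6} and~\ref{cla7} with $r=2$ and $x_1=x$, we get $\epsilon(x,2)<1$ and hence $m(x,y)>0$ whenever $x\ge 5$. Since every point with $x,y\ge 5$ lies in $FCS(i,j)$ in each of the three columns, the lemma holds on $\{\min(x,y)\ge 5\}$. It therefore remains to treat the boundary strips, and by the symmetry above I may assume $x\le y$ and $x\le 4$, so that $x\in\{1,2,3,4\}$ while $y$ ranges over all positive integers. Note that Claim~\ref{cla6} is vacuous here, since $\epsilon(x,2)\ge 1$ for $x\le 4$, so these strips must be handled by explicit computation for \emph{all} $y$, not merely for small $y$.

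On such a strip $\gcd(x,y)\le 4$, so only a bounded number of M\"obius terms survive in~\eqref{eq3} and each of $d(x-1,y),d(x,y-1),d(x,y)$ becomes an explicit quasi-polynomial in $y$; the three differences then express $m(x,y)$ as a quasi-polynomial whose polynomial part is nonnegative and whose support is pinned down by finitely many congruences together with a threshold in $y$. The case $x=1$ already illustrates this: since $d(1,y)=1$ for all $y$, one has $m(1,y)=d(0,y)$, which by the one-generator computation equals $\delta_{y,1}$ when $j$ is even and equals $1$ for $y\in\{1,2\}$ but $0$ for $y\ge 3$ when $j$ is odd, matching the $x=1$ entries of all three columns. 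For $x=2$ and $i,j$ even one finds $d(2,y)=\tfrac{y+1}2$ or $\tfrac{y}2$ according as $y$ is odd or even, whence $m(2,y)=1+d(2,y-1)-d(2,y)$ equals $1$ for even $y$ and $0$ for odd $y$, which is exactly the condition ``$x=2$ and $2\mid y$''; the strips $x=3,4$ are evaluated in the same manner, the polynomial part now growing with $y$ so that $m(x,y)>0$ for all large $y$ with only finitely many small exceptions (for instance, for $i,j$ even, $m(3,y)>0$ precisely for $y=3$ and $y\ge5$).

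The main obstacle is the sign bookkeeping inside these strips: the factors $(-1)^{g}$ and $(-1)^{g/i}$ behave differently in the three parity cases, and one must follow them carefully to obtain the exact residue classes of $y$ appearing in the table (such as $4\mid y+2$ and $4\mid y+3$ in the odd--odd column) and to settle the genuinely small exceptional points, for example $(3,3),(3,4),(4,3),(4,4)$, which can lie inside or outside $FCS$ depending on the case, by direct substitution into~\eqref{eq3}. Once each strip $x\in\{1,2,3,4\}$ has been checked against the corresponding column of Table~\ref{formal}, and the reflection symmetry $m_{i,j}(x,y)=m_{j,i}(y,x)$ has been used to cover the case $i$ even, $j$ odd, the computed support of $m(x,y)$ coincides exactly with $FCS(i,j)$. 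This proves that $m(x,y)>0$ if $(x,y)\in FCS(m-p_1,m-p_2)$ and $m(x,y)=0$ otherwise.
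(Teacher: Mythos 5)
Your proposal is correct and follows essentially the same route as the paper: interpret $m(x,y)=d(x-1,y)+d(x,y-1)-d(x,y)$ as the kernel dimension of the restricted (surjective) bracket map, assume $x\le y$, dispose of $x\ge 5$ via Claims~\ref{cla6} and~\ref{cla7}, and check the strips $x\in\{1,2,3,4\}$ against Table~\ref{formal} by explicit computation of the quasi-polynomials $d(x,y)$. The only minor divergence is that for the tails of the strips $x=3,4$ the paper avoids exact evaluation by invoking the lower bound of Claim~\ref{cla2} to get $m(x,y)>0$ for $y\ge 6$, whereas you propose computing the quasi-polynomial exactly for all $y$; both work.
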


\begin{proof}[Proof of Lemma~\ref{lalg}]
%

Assume $x\le y$ without loss of generality. Consider cases $x=1,2,3,4$ and $x\ge 5$ separately.

{\it Case $x=1$} follows by a direct computation.

{\it Case $x=2$.} If $m-p_1$ and $m-p_2$ are even then by formula~(\ref{eq3}) in Theorem~\ref{thm-general}
we have $d(2,y)=[\frac{y+1}{2}]$. So $m(2,y)=d(1,y)+d(2,y-1)-d(2,y)\ne 0$ if and only if $2|y$, which is equivalent to $(2,y)\in FCS (1,1)$.
So for $x=2$, $m-p_1$ and $m-p_2$ even the lemma follows. The cases when either $m-p_1$ or $m-p_2$ is odd are considered analogously.

\begin{table}[h]
\caption{The ``multiplicities'' $m(x,y)$ of the points $(x,y)\in FCS (i,j)$ for small $x,y$.}
\label{dims}
\begin{center}
\begin{tabular}{ccc}
\begin{tabular}{|lr|lllll|}
\hline
\multicolumn{7}{|c|}{for $i,j$ even:}\\[2pt]
\hline
\multicolumn{2}{|l|}{5}& 0& 0& 1& 1& 3\\
\multicolumn{2}{|l|}{4}& 0& 1& 0& 2& 1\\
\multicolumn{2}{|l|}{3}& 0& 0& 1& 0& 1\\
\multicolumn{2}{|l|}{2}& 0& 1& 0& 1& 0\\
\multicolumn{2}{|l|}{1}& 1& 0& 0& 0& 0\\
\hline
$y$ &    &   &   &   &   & \\
    & $x$& 1 & 2 & 3 & 4 & 5 \\
\hline
\end{tabular}
\quad
&
\begin{tabular}{|lr|lllll|}
\hline
\multicolumn{7}{|c|}{for $i$ odd, $j$ even:}\\[2pt]
\hline
\multicolumn{2}{|l|}{5}& 0& 1& 1& 1& 3\\
\multicolumn{2}{|l|}{4}& 0& 0& 1& 2& 1\\
\multicolumn{2}{|l|}{3}& 0& 1& 1& 0& 1\\
\multicolumn{2}{|l|}{2}& 0& 0& 1& 1& 0\\
\multicolumn{2}{|l|}{1}& 1& 1& 0& 0& 0\\
\hline
$y$ &    &   &   &   &   & \\
    & $x$& 1 & 2 & 3 & 4 & 5 \\
\hline
\end{tabular}
\quad
&
\begin{tabular}{|lr|lllll|}
\hline
\multicolumn{7}{|c|}{for $i,j$ odd:}\\[2pt]
\hline
\multicolumn{2}{|l|}{5}& 0& 1& 1& 1& 3\\
\multicolumn{2}{|l|}{4}& 0& 0& 1& 2& 1\\
\multicolumn{2}{|l|}{3}& 0& 0& 1& 1& 1\\
\multicolumn{2}{|l|}{2}& 1& 1& 0& 0& 1\\
\multicolumn{2}{|l|}{1}& 1& 1& 0& 0& 0\\
\hline
$y$ &    &   &   &   &   & \\
    & $x$& 1 & 2 & 3 & 4 & 5 \\
\hline
\end{tabular}
\\[3pt]
\end{tabular}
\end{center}
\end{table}



{\it Case $x=3$.} For $y=3,4,5$ the lemma is proved by a direct computation, see Table~\ref{dims}. It remains to prove that for $y\ge 6$ we have $m(3,y)>0$. This follows from Claim~\ref{cla2} 
because for $x=3$ and $y\ge 6$ the right-hand side in Claim~\ref{cla2} is at least $\frac{y+1}{6}-\frac{y+3}{y+2}>0$.


{\it Case $x=4$.} For $y=4,5$ the lemma is proved by a direct computation. Let us prove that for $y\ge 6$
we have $m(4,y)>0$. This again follows from Claim~\ref{cla2} 
because for $x=4$ and $y\ge 6$
the right-hand side in Claim~\ref{cla2} is at least $\frac{(y+2)(y+1)}{24}-\frac{(y+4)(y+2)}{4(y+3)}>0$.

{\it Case $x\ge 5$.} For $y\ge x\ge 5$ we have $m(x,y)>0$ by Claims~\ref{cla6} and~\ref{cla7}.

Lemma~\ref{lalg} and hence Theorem~\ref{thm-1b} with Corollary~\ref{cor1} are proved.
\end{proof}

\subsection{Framed links}\label{ssec-framed-proofs}


The group structure, analogue of the decomposition from Theorem~\ref{th-structure}, and the following exact sequence for partially framed links are constructed completely analogously to the particular case of framed links ($l_k=m-p_k$) studied in \cite{Hae66A}.

\begin{theorem}\label{th4} \textup{(Cf. \cite[Corollary~5.9]{Hae66A})} For $p<m-2$ 
there is an exact sequence of finitely generated abelian groups
$$
\dots \to \pi_{p}(V_{m-p,l}) \xrightarrow{\tau} {E}^m(S^p\times D^l) \xrightarrow{i^*} E^{m}(S^p)
\xrightarrow{Ob} \pi_{p-1}(V_{m-p,l})\to {E}^{m-1}(S^{p-1}\times D^{l})\to\dots
$$
\end{theorem}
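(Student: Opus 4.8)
The plan is to follow Haefliger's construction of the exact sequence for fully framed links \cite[\S5]{Hae66A} essentially verbatim, replacing the orthogonal group $SO(m-p)=V_{m-p,m-p}$ governing full framings by the Stiefel manifold $V_{m-p,l}$ governing partial $l$-framings. The three maps are geometric. The map $i^*$ restricts a partially framed link $f\colon S^p\times D^l\hookrightarrow S^m$ to its core $f|_{S^p\times\{0\}}$. The map $\tau$ reframes a fixed standard embedding by an element of $\pi_p(V_{m-p,l})$, i.e.\ it is the ``inclusion of the fibre'' map. The map $Ob$ assigns to a knot the obstruction to finding $l$ everywhere independent normal vector fields along it. The whole sequence, including its continuation to lower values of $m$ and $p$, is then the long exact sequence of a forgetful fibration, as in the case $l=m-p$.

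First I would reduce partially framed links to normal data. By the tubular neighbourhood theorem, an embedding $f\colon S^p\times D^l\hookrightarrow S^m$ is determined up to isotopy by its core knot $g:=i^*f$ together with a homotopy class of $l$-frame field in the normal bundle $\nu_g$ of $g$: the $l$ coordinate directions of $D^l$ supply such a frame field, and conversely any frame field thickens to an embedding of $S^p\times D^l$ extending $g$ (here the hypothesis $m-p>2$ guarantees that Haefliger's embedding theory applies and that the thickening is unique up to isotopy). Thus $E^m(S^p\times D^l)$ is identified with the set of isotopy classes of pairs consisting of a knot and an $l$-frame field of its normal bundle, and under this identification $i^*$ is the map forgetting the frame field.

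The exact sequence then arises as the homotopy sequence of the forgetful map from the space of partially framed embeddings to the space of embeddings, whose fibre over $g$ is the space $\Gamma_g$ of sections of the bundle with fibre $V_{m-p,l}$ associated to $\nu_g$. The essential computation is of the low-dimensional homotopy of $\Gamma_g$: writing $S^p=D^p_+\cup_{S^{p-1}}D^p_-$ and trivialising $\nu_g$ over each hemisphere, a section becomes a pair of maps $D^p_\pm\to V_{m-p,l}$ agreeing over $S^{p-1}$ through the clutching function of $\nu_g$. Hence the existence of a section (equivalently, whether $g$ lifts to a partially framed link) is governed by a primary obstruction in $\pi_{p-1}(V_{m-p,l})$, which is exactly $Ob(g)$, and when this obstruction vanishes the set of sections is acted on freely and transitively up to homotopy by $\pi_p(V_{m-p,l})$, which is exactly the image of $\tau$. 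Passing to $\pi_0$ yields exactness at $E^m(S^p)$ (a knot extends iff $Ob(g)=0$) and at $E^m(S^p\times D^l)$ (two framings of the same knot differ by a twist in $\mathrm{im}\,\tau$), and shifting $m,p$ down by one produces the remainder of the sequence.

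Finally I would verify that all the terms carry compatible abelian group structures induced by componentwise connected sum, exactly as in \cite{Hae66A}, that the three maps are homomorphisms, and that the resulting exact sequence of pointed sets thereby promotes to an exact sequence of finitely generated abelian groups; finite generation is inherited from the finite generation of the Stiefel homotopy groups $\pi_*(V_{m-p,l})$ (see Lemma~\ref{lstiefel}) and of the knot groups $E^m(S^p)$. The main obstacle is the homotopy-theoretic bookkeeping for the possibly nontrivial normal bundle $\nu_g$: one must check that the clutching twist does not disturb the identification of the relevant components and obstruction of $\Gamma_g$ with $\pi_p(V_{m-p,l})$ and $\pi_{p-1}(V_{m-p,l})$, and that these identifications are compatible with connected sum so that the maps are homomorphisms. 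Since, however, each of these steps is literally the one carried out by Haefliger in the case $l=m-p$ with $SO(m-p)$ uniformly replaced by $V_{m-p,l}$, no genuinely new phenomenon arises, and the argument of \cite[Corollary~5.9]{Hae66A} goes through.
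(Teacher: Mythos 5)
Your proposal is correct and takes essentially the same route as the paper, which offers no detailed argument for Theorem~\ref{th4} beyond the remark that the sequence is ``constructed completely analogously to the particular case of framed links ($l_k=m-p_k$) studied in \cite{Hae66A}'' (Corollary~5.9 there). Your elaboration --- reducing a partial framing to an $l$-frame field of the normal bundle via the tubular neighbourhood theorem, reading off $Ob$ and $\tau$ from the section space of the associated $V_{m-p,l}$-bundle, and checking compatibility with connected sum --- is exactly the content of that analogy with $SO_{m-p}$ replaced by $V_{m-p,l}$.
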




Theorem~\ref{th-framed} follows immediately from the following result:

\begin{lemma}\label{l-framedsplit}
After tensoring with $\mathbb{Q}$ the sequence of Theorem~\ref{th4} splits as
$$
0 \to \pi_{p}(V_{m-p,l}) \tensor \Q \xrightarrow{\tau\tensor\mathrm{Id}_\Q} {E}^m(S^p\times D^l) \tensor \Q \xrightarrow{i^*\tensor\mathrm{Id}_\Q} E^{m}(S^p) \tensor \Q  \to 0.
$$
\end{lemma}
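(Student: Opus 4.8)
The plan is to show that the sequence of Theorem~\ref{th4}, after tensoring with $\Q$, breaks apart because the two connecting homomorphisms $Ob$ surrounding the term $E^m(S^p\times D^l)$ become trivial rationally. Concretely, to obtain the displayed short exact sequence it suffices to prove that the obstruction map $Ob\colon E^m(S^p)\to \pi_{p-1}(V_{m-p,l})$ has finite image, equivalently that $Ob\tensor\mathrm{Id}_\Q=0$. Granting this, exactness of the tensored sequence immediately forces $\tau\tensor\mathrm{Id}_\Q$ to be injective (its kernel is the image of the previous $Ob\tensor\mathrm{Id}_\Q$, which vanishes) and $i^*\tensor\mathrm{Id}_\Q$ to be surjective (the next term is the image of $Ob\tensor\mathrm{Id}_\Q$, which vanishes), so the long exact sequence collapses to the asserted short exact sequence.

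First I would reduce the claim $Ob\tensor\mathrm{Id}_\Q=0$ to a statement purely about the rational homotopy of the knot group and the Stiefel manifold. The cleanest route is a rank count: by exactness, $\rank\tau$ equals $\rank\pi_p(V_{m-p,l})-\rank(\image Ob_p)$ and $\rank i^*$ equals $\rank E^m(S^p)-\rank(\image Ob_{p-1})$, where $Ob_p$ and $Ob_{p-1}$ denote the obstruction maps into $\pi_p(V_{m-p,l})$ and $\pi_{p-1}(V_{m-p,l})$ respectively. Since the sequence is infinite and self-similar under the dimension shift $(m,p)\mapsto(m-1,p-1)$, the same obstruction map appears as both an incoming and an outgoing arrow at neighbouring stages, so it is enough to bound one such map. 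I would compare the total rank $\rank E^m(S^p\times D^l)=\rank E^m(S^p)+\rank\pi_p(V_{m-p,l})$ asserted in Theorem~\ref{th-framed} against the general exactness inequality, and observe that equality can hold only if every $Ob$ vanishes rationally; thus the content is to verify that $E^m(S^p)$ maps rationally trivially into $\pi_{p-1}(V_{m-p,l})$.

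For that verification I would use the known rational structure of both groups. The group $E^m(S^p)$ has rank at most one by Theorem~\ref{th1}, being infinite precisely when $4\mid p+1$ and $m<3p/2+2$, while the ranks of $\pi_{p-1}(V_{m-p,l})$ are given explicitly by Lemma~\ref{lstiefel}. In the vast majority of dimension ranges at least one of these two groups is rationally trivial, so $Ob\tensor\mathrm{Id}_\Q$ is automatically zero for trivial reasons. The only delicate range is where both $E^m(S^p)\tensor\Q$ and the relevant $\pi_{p-1}(V_{m-p,l})\tensor\Q$ are nonzero simultaneously; there I would identify the map $Ob$ topologically. Since $Ob$ records the obstruction to extending a framing over the knot, it factors through a stabilization/suspension map between homotopy groups of spheres and the Stiefel fibration $S^{m-p-1}\to V_{m-p,l}\to V_{m-p-1,l-1}$, and one checks by chasing the fibration's long exact sequence that the relevant rational generator of $E^m(S^p)$ maps to a torsion class.

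The main obstacle is precisely this last overlap range: establishing that $Ob$ kills the rational generator of $E^m(S^p)$ when both source and target have positive rank. I expect to handle it by a direct rational homotopy computation using the fibration of Stiefel manifolds together with Lemma~\ref{lstiefel}, comparing the degree in which the free part of $E^m(S^p)$ lives (namely $p\equiv 3\bmod 4$ with $m<3p/2+2$) against the degrees in which $\pi_{p-1}(V_{m-p,l})$ is rationally nonzero, and showing these never line up compatibly with the natural factorization of $Ob$. Once this case analysis is complete, the splitting and hence Theorem~\ref{th-framed} follow formally from the rank equality described above.
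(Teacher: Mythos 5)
Your overall strategy coincides with the paper's: reduce the splitting to the claim that $Ob \colon E^m(S^p)\to\pi_{p-1}(V_{m-p,l})$ has finite image, and establish that claim by comparing the dimension ranges in which source and target are rationally nontrivial (Theorem~\ref{th1} against Lemma~\ref{lstiefel}). Two issues. First, the ``rank count'' you offer as the cleanest route to the reduction invokes Theorem~\ref{th-framed}; but that theorem is deduced \emph{from} this lemma, so the step is circular. It is fortunately also redundant: exactness of the tensored sequence alone shows that the vanishing of $Ob\otimes\mathrm{Id}_\Q$ (at the two relevant spots) gives the short exact sequence, as you say in your opening paragraph.

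Second, and more seriously, the decisive step is never carried out. You explicitly defer ``the main obstacle'' --- checking that the range where $E^m(S^p)$ is infinite (namely $4\,|\,p+1$ and $m<3p/2+2$) and the range where $\pi_{p-1}(V_{m-p,l})$ is infinite do not interact --- to a computation you ``expect to handle''. That computation is the entire mathematical content of the proof; the paper does it in two lines by feeding the degree $p-1$ with $p\equiv 3 \bmod 4$ into Lemma~\ref{lstiefel}, where all the conditions involving $4\,|\,p$ or $4\,|\,p-2$ fail. Moreover, if you actually run the check you will find that your hoped-for conclusion that the degrees ``never line up'' is not literally true: the remaining condition $2\,|\,p-1=m-p-l$ of Lemma~\ref{lstiefel} is compatible with $4\,|\,p+1$ and $m<3p/2+2$ exactly at $(m,p,l)=(6,3,1)$, where $E^6(S^3)\cong\Z$ and $\pi_2(V_{3,1})=\pi_2(S^2)\cong\Z$ are both infinite. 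Closing that case needs a supplementary geometric input --- e.g.\ that $Ob$ factors through the normal bundle class in $\pi_{p-1}(SO_{m-p})$ and $\pi_2(SO_3)=0$, so $Ob$ vanishes there --- a point which the paper's own argument also passes over silently. Until the case analysis is written down and this exceptional triple is dealt with, the proof is incomplete.
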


For the proof of the lemma we need to compute the rational homotopy groups of orthogonal groups and describe their generators. Let us introduce some notation.
Let $\mathrm{Vect}_q(S^{p+1})$ be the set of isomorphism classes of oriented vector bundles of rank $q$ over $S^{p+1}$.  Viewing a representative of $x \in \pi_p(SO_q)$ as a clutching function for a vector bundle over $S^{q+1}$ gives the well-known bijection $c \colon \pi_p(SO_q) \cong {\rm Vect}_q(S^{p+1})$ which we use to identify ${\rm Vect}_q(S^{p+1})$ as an abelian group.  We use $c$, the \emph{Pontryagin class} and the \emph{Euler class} to define homomorphisms
\[ \widehat P, \widehat E \colon \pi_p(SO_q) \tensor \Q \cong \mathrm{Vect}_q(S^{p+1})  \tensor \Q \to
H^{p+1}(S^{p+1};\mathbb{Q})\cong \mathbb{Q}
\]
as follows: for $V \in {\rm Vect}_q(S^{p+1})$ and for $\alpha \in \Q$ define
\[ \widehat P(V \tensor \alpha) := \left\{ \begin{array}{cl} P_k(V) \tensor \alpha, & \text{if $p=4k-1$,} \\ 0 & \text{if $p \neq 4k-1$;} \end{array} \right. \quad \text{and} \quad \widehat E(V \tensor \alpha) := \left\{ \begin{array}{cl} E(V) \tensor \alpha & \text{if $(p, q)=(2j-1, 2j)$,} \\ 0 & \text{if $(p, q)\neq (2j-1, 2j),$} \end{array} \right. \]
where $P_k$ and $E$ denote respectively the $k$-th Pontryagin class and the Euler class.  We then define
$P_{p, q} =P_k :=\widehat{P}^{-1}(1)\in \pi_p(SO_q) \tensor \Q$, if the map $P\colon \pi_p(SO_q)\to \mathbb{Q}$ is nontrivial (and thus $p=4k-1$ for some integer $k$), and $P_{p,q}=0$, otherwise. We also define $E_{p,q} = E$ analogously.

\begin{claim}\label{cla9}
There are isomorphisms
\[
\pi_{p}(SO_q) \tensor \Q \cong \left \{ \begin{array}{cl} \Q(P_k) \oplus \Q(E) & \text{if $p = q-1 = 4k - 1$,} \\
\Q(P_k) &\text{if $p = 4k -1$ and $p \neq q-1 > p/2$,}\\ \Q(E) & \text{if $p = q-1 = 4k+1$,} \\
0 & \text{otherwise.}
\end{array} \right.
\]
\end{claim}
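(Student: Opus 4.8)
The plan is to compute $\pi_p(SO_q)\tensor\Q$ from the rational homotopy type of the orthogonal group and then to identify the generators by evaluating characteristic classes of the bundles corresponding to homotopy classes under the clutching isomorphism $c$. Recall that the rational cohomology of $SO_q$ is an exterior algebra: for $q=2n+1$ it is generated by transgressive classes in degrees $3,7,\dots,4n-1$ (dual to the Pontryagin classes), while for $q=2n$ there is in addition a generator in degree $2n-1$ (dual to the Euler class). Since all generators are odd-dimensional, $SO_q$ is rationally a product of odd-dimensional spheres, and hence
\[
\dim_\Q \pi_p(SO_q)\tensor\Q = \#\{\,\text{exterior generators of degree } p\,\}.
\]
This immediately yields the four ranks in the statement: the degrees $4k-1$ with $4k-1\le q-2$ contribute the Pontryagin generators, the single degree $2n-1=q-1$ (present only for $q=2n$ even) contributes the Euler generator, and these two series coincide precisely when $q=4k$, i.e. $p=q-1=4k-1$, giving rank $2$.

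Equivalently, and more usefully for tracking generators, I would run this computation through the fibration $SO_{q-1}\hookrightarrow SO_q\to S^{q-1}$, using that $\pi_p(S^n)\tensor\Q$ equals $\Q$ for $p=n$, plus an extra $\Q$ for $p=2n-1$ when $n$ is even, and is $0$ otherwise. Tensoring the long exact sequence with $\Q$, the stabilization map $\pi_p(SO_{q-1})\tensor\Q\to\pi_p(SO_q)\tensor\Q$ is an isomorphism except when $p$ or $p+1$ equals $q-1$, or, for $q$ odd, equals $2q-3$, these being the only degrees in which $\pi_*(S^{q-1})\tensor\Q$ is nonzero. Starting from the stable value $\pi_p(SO)\tensor\Q=\Q$ for $p\equiv3\pmod4$ and $0$ otherwise, this reproduces the ranks above and, crucially, realises the extra class at $p=q-1$ (for $q$ even) as the image of the generator of $\pi_{q-1}(S^{q-1})$ under the connecting map.

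For the identification of generators I would use the clutching isomorphism $c\colon\pi_p(SO_q)\cong\mathrm{Vect}_q(S^{p+1})$. Since $H^*(S^{p+1};\Q)$ is concentrated in degrees $0$ and $p+1$, the Pontryagin class $p_k$ of such a bundle can be nonzero only when $4k=p+1$, i.e. $p=4k-1$, and the Euler class only when $q=p+1$, i.e. $p=q-1$; moreover $2e=0$ for odd-rank bundles, so $\widehat E$ is automatically zero unless $q$ is even, matching the definition of $\widehat E$. Nontriviality of $\widehat P$ is the classical fact that $H^*(BSO;\Q)$ is polynomial on the Pontryagin classes, so $p_k$ detects the generator of $\pi_{4k-1}(SO)\tensor\Q$; this survives to $SO_q$ throughout the range $q-1>p/2$ because, as noted above, stabilization is a rational isomorphism outside the exceptional degrees. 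Nontriviality of $\widehat E$ follows from the fact that the tangent bundle of $S^{2j}$ has Euler number $\chi(S^{2j})=2\neq0$, so its clutching class in $\pi_{2j-1}(SO_{2j})$ is nonzero rationally and is detected by $\widehat E$. This shows $\widehat P$ and $\widehat E$ are isomorphisms onto $\Q$ in the case $p=4k-1\neq q-1$ (with $q-1>p/2$) and in the case $p=q-1=4k+1$ respectively, so $P_k=\widehat P^{-1}(1)$ and $E=\widehat E^{-1}(1)$ are well-defined generators.

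The main obstacle will be the rank-$2$ case $p=q-1=4k-1$, where I must show that $P_k$ and $E$ are linearly independent, i.e. that $(\widehat P,\widehat E)\colon\pi_{q-1}(SO_q)\tensor\Q\to\Q^2$ is an isomorphism. For this I would split $\pi_{q-1}(SO_q)\tensor\Q$ using the projection $\pi_*\colon\pi_{q-1}(SO_q)\to\pi_{q-1}(S^{q-1})$: a bundle whose clutching function factors through $SO_{q-1}$ admits a nowhere-zero section and hence has $e=0$, while $\widehat P$ is nonzero on this image of $\pi_{q-1}(SO_{q-1})\tensor\Q$ (the stable Pontryagin generator); conversely $\widehat E$ is nonzero on a class with $\pi_*\neq0$, by the tangent-bundle computation. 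Thus $\widehat P$ and $\widehat E$ are supported on complementary lines, proving independence. A secondary point requiring care is justifying that the Pontryagin generator genuinely persists into the metastable range $p/2<q-1<p$ rather than only the classical stable range $p\le q-2$; this is exactly where the hypothesis $q-1>p/2$ (equivalently $p<2q-2$) is used, and it holds rationally because odd spheres are rational Eilenberg--MacLane spaces, so no terms beyond the exceptional degrees can obstruct stabilization.
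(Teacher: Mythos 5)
Your approach is essentially the paper's: the paper likewise reads off the rational homotopy generators of $SO_q$ from its known rational homotopy type (citing F\'elix--Halperin--Thomas for generators $x_1,\dots,x_n$ of $SO_{2n+1}$ in degrees $4k-1$, and $x_1,\dots,x_{n-1},x_n'$ of $SO_{2n}$ with $x_n'$ in degree $2n-1$), and then observes that the $x_k$ are stably nontrivial, hence detected by Pontryagin classes, while $x_n'$ is detected by the Euler class. You supply more detail than the paper on the detection step (the Euler number of $TS^{2j}$, and the independence of $P_k$ and $E$ in the rank-two case via the projection to $\pi_{q-1}(S^{q-1})$), which is welcome.

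However, your first paragraph contains a concrete error that, taken literally, yields the wrong answer: you assert that ``the degrees $4k-1$ with $4k-1\le q-2$ contribute the Pontryagin generators.'' The correct range is $4k-1\le 2q-3$, equivalently $q-1>p/2$: for $q=2n+1$ the generators $x_k$ go up to degree $4n-1=2q-3$, and for $q=2n$ up to $4n-5=2q-5$, which for $p\equiv 3\pmod{4}$ is again equivalent to $q-1>p/2$. With your stated condition, $\pi_7(SO_7)\otimes\mathbb{Q}$ would come out as $0$ instead of $\mathbb{Q}$, and even $\pi_3(SO_4)\otimes\mathbb{Q}$ would lose its Pontryagin summand, contradicting your own rank-two conclusion for $p=q-1=4k-1$. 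Relatedly, the phrase ``for $q=2n$ there is in addition a generator in degree $2n-1$'' is misleading: the degree-$(4n-1)$ generator of $SO_{2n+1}$ is \emph{replaced} by, not supplemented with, the degree-$(2n-1)$ one. Since the rest of your write-up (the fibration argument and the persistence of the Pontryagin generator precisely when $q-1>p/2$) uses the correct range, these read as slips rather than a conceptual gap, but the sentence as written does not prove the claim. A final small point: the unstable class in $\pi_{q-1}(SO_q)\otimes\mathbb{Q}$ for $q$ even is a preimage of the generator of $\pi_{q-1}(S^{q-1})$ under the projection of the fibration, not its image under the connecting map (which lands in $\pi_{q-2}(SO_{q-1})$); you describe it correctly later when proving independence.
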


\begin{proof}[Proof of Claim \ref{cla9}]
This is well-known: we give a proof using results in \cite{F-H-T01}.   A general result on the rational homotopy type of  homology-finite $H$-spaces \cite[\S 12(a) Example 3]{F-H-T01} ensures that $\pi_*(SO_q) \tensor \Q$ has generators only in odd dimensions.  In \cite[\S 15(f) Example 3]{F-H-T01} these generators are given:
\[ SO_{2n+1} : \{x_1, \dots x_n \}, \]
\[ SO_{2n} : \{ x_1, \dots, x_{n-1}, x_n' \}. \]
Here $x_k$ has dimension $4k-1$ and $x'_n$ has dimension $2n-1$.  Moreover the generators $x_k$ are stably nontrivial and so are detected by the $k$-th Pontryagin class and the generator $x_n'$ is detected by the Euler class.  This proves the claim.
\end{proof}



\begin{proof}[Proof of Lemma~\ref{lstiefel}]
Consider the exact homotopy sequence of the fibration $SO_{q-l}\to SO_{q}\to V_{q,l}$ defined by forgetting the first $q-l$ vectors of an oriented $q$-frame in $\R^q$:
$$
\dots \to
\pi_p(SO_{q-l})     \xrightarrow{i_p}
\pi_p(SO_q)         \xrightarrow{j_p}
\pi_p(V_{q,l})      \xrightarrow{\partial_p} \pi_{p-1}(SO_{q-l}) \xrightarrow{i_{p-1}} \pi_{p-1}(SO_q)
\to \dots
$$
By exactness we have that
$\pi_p(V_{q,l})\otimes \mathbb{Q}\cong
\mathrm{coker}(i_p\otimes\mathrm{Id}_\mathbb{Q})
\oplus \mathrm{ker}(i_{p-1}\otimes\mathrm{Id}_\mathbb{Q})$.

Henceforth we consider all groups and homomorphisms to be tensored with $\Q$, respectively ${\rm Id}_\Q$, however we omit $\otimes\mathbb{Q}$ and $\otimes{\rm Id}_\Q$ from the notation of groups and homomorphisms.  By Claim~\ref{cla9} and the exact sequence above, the group $\pi_p(V_{q,l})$ is generated by those elements among $j_p E_{p,q}$, $j_p P_{p,q}$, $\partial_p^{-1} E_{p-1,q-l}$, $\partial_p^{-1}P_{p-1,q-l}$ which exist and are non-zero. It remains to investigate when each of the elements vanishes.

Since the Euler class vanishes for bundles with section see that $j_p E_{p,q}\ne 0$ iff $E_{p,q}\ne0$ and that $i_{p-1}E_{p-1, q-1} = 0$.  We deduce that $j_p E_{p,q}\ne 0$ iff $2\,|\,p+1=q$ and that  $\partial_p^{-1} E_{p-1,q-l}\ne 0$ iff $2\,|\,p=q-l$.

Moving to the classes $P_{p,q}$, observe that if both $P_{p, q-l}$ and $P_{p,q}$ are nonzero then $i_p P_{p,q-l}=P_{p,q}$ because the Pontryagin classes are stable.  Hence $j_p P_{p,q}\ne 0$ iff $P_{p,q}\ne0$ and $P_{p,q-l}=0$. Applying Claim~\ref{cla9}
we deduce that $j_p P_{p,q}\ne 0$ iff $4\,|\,p+1$ and $p/2+1<q<l+p/2+1$.

Finally, $\partial_p^{-1} P_{p-1,q-l}\ne 0$ iff
$P_{p-1,q-l}\ne 0$ and $P_{p-1,q}=0$.  By stability we see that $\partial_p^{-1} P_{p-1,q-l}= 0$.

Thus $\rank \pi_p(V_{q,l})=2$, if both $j_p E_{p,q}, j_p P_{p,q}\ne0$. We have $\rank \pi_p(V_{q,l})=1$, if exactly one of the elements $j_p E_{p,q}$, $j_p P_{p,q}$, $\partial_p^{-1} E_{p-1,q-l}$ exists and is nonzero. Otherwise $\rank \pi_p(V_{q,l})=0$.  Combining the conditions for these cases we found above, we obtain the statement of the lemma.
\end{proof}

\begin{proof}
[Proof of Lemma~\ref{l-framedsplit}]
It suffices to prove that the map $E^{m}(S^p)\to \pi_{p-1}(V_{m-p,l})$ from the sequence of Theorem~\ref{th4} has finite image for $p<m-2$.  If either $p+1$ is not divisible by $4$ or $m\ge 3p/2+2$ then by Theorem~\ref{th1} the group $E^{m}(S^p)$ is finite. If $p+1$ is divisible by $4$ and $m<3p/2+2$ then by Lemma~\ref{lstiefel} the group $\pi_{q-1}(V_{m-q,p})$ is finite. The lemma now follows.
\end{proof}

\section{Applications and related problems} \label{sec-further}
In this section we discuss applications of our rational calculations of the groups of links to the classification of handlebodies and thickenings and to the computation of certain mapping class groups.  We also raise some open problems. Throughout this section denote $q_k := m-p_k-1$.

\subsection{Handlebodies and thickenings of wedges of spheres} \label{ssec-handle}

Recall that an $(m+1)$-dimensional handlebody $V$ is a connected manifold with boundary which is obtained from the disc $D^{m+1}$ by attaching a finite set of handles:
\[ V \cong D^{m+1} \cup_f \bigl(\sqcup_{k=1}^r D^{p_k+1} \times D^{q_k + 1} \bigr) \]
where $f \colon \sqcup_{k=1}^r S^{p_k} \times D^{q_k+1} \to S^m = \partial D^{m+1}$ is a framed link.  We define
\[ \mathcal{H}_+(m+1; p_1+1, \dots, p_r+1) : = \{ V \}/\text{o.\,p.\,diffeomorphism} \]
to be the set of oriented diffeomorphism classes of handlebodies with precisely $r$ handles of respective dimensions $p_1 + 1, \dots, p_r + 1$.

Determining the set $\mathcal{H}_+(m; p_1+1, \dots p_r+1)$ can be an intricate problem and, as is often the case, it helps to introduce extra structure.  The manifold $V$ above comes equipped with an obvious homotopy equivalence $\phi \colon \vee_{k=1}^r S^{p_k +1} \simeq V$.  Following \cite{Wa66} and \cite[\S 6]{Hae66C} one defines the set of equivalence classes of homotopy equivalences
\[ \mathcal{T}^{m+1}(\vee_{k=1}^r S^{p_k + 1}) = \{ \phi \colon \! \vee_{k=1}^r S^{p_k + 1} \simeq W \}/ \sim \]
where $W$ is a connected oriented smooth manifold with a non-empty simply connected boundary and $(W_0, \phi_0) \sim (W_1, \phi_1)$ if and only if there is an orientation preserving diffeomorphism $h \colon W_0 \cong W_1$ such that $\phi_1$ is homotopic to $h \circ \phi_0$.  The group of homotopy classes of self-homotopy equivalences of $\vee_{k=1}^r S^{p_k + 1}$, denoted  $\mathcal{E}(\vee_{k=1}^r S^{p_k + 1})$, acts by pre-composition on $\mathcal{T}^{m+1}(\vee_{k=1}^r S^{p_k + 1})$ and the obvious forgetful map defines a bijection
\[   \mathcal{T}^{m+1}(\vee_{k=1}^r S^{p_k + 1}) / \mathcal{E}(\vee_{k = 1}^r S^{p_k + 1}) \equiv \mathcal{H}_+(m+1; p_1+1, \dots, p_r+1), \quad [W, \phi] \mapsto [W]. \]

As described above attaching handles defines a map
\[ \omega \colon E^m(\sqcup_{k=1}^rS^{p_k} \times D^{q_k + 1}) \to \mathcal{T}^{m+1}(\vee_{k=1}^r S^{p_k + 1}) \]
and Haefliger proved the following:
\begin{theorem}\textup{See \cite[Theorem 6.2]{Hae66C}}
The map $\omega \colon E^m(\sqcup_{k=1}^r S^{p_k} \times D^{q_k + 1}) \to \mathcal{T}^{m+1}(\vee_{k=1}^r S^{p_k+1})$ is surjective if $2p_k - p_j + 2 \leq m$ and $2p_k - p_j \geq 1$ and injective if $2p_k - p_j +2 < m$ and $2p_k - p_j > 1$ for all $k, j = 1, \dots, r$.
\end{theorem}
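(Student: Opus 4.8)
The plan is to follow Haefliger's handle-theoretic approach, building a thickening from a Morse function and then normalizing the resulting handle decomposition. Throughout, write $V = D^{m+1} \cup_f \bigl( \sqcup_{k=1}^r D^{p_k+1} \times D^{q_k+1} \bigr)$ for the handlebody attached along a framed link $f$, and observe that the cores of the handles together with cones on the attaching spheres inside $D^{m+1}$ assemble into a wedge $\vee_{k=1}^r S^{p_k+1}$; the inclusion of this spine is a homotopy equivalence $\phi_f \colon \vee_{k=1}^r S^{p_k+1} \simeq V$, so that $\omega(f) = [V, \phi_f]$.

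For surjectivity I would start with an arbitrary thickening $(W,\phi)$, where $\partial W$ is simply connected and $\phi$ is a homotopy equivalence. First choose a Morse function on $W$ that is minimal at an interior $0$-handle and maximal on $\partial W$; after cancelling superfluous $0$- and $1$-handles, using that the interior is connected and simply connected, one is left with a single $0$-handle. Since $H_*(W) \cong H_*(\vee_k S^{p_k+1})$ is free with one generator in each degree $p_k+1$, the remaining handles can be arranged \emph{algebraically} to occur only in degrees $p_1+1, \dots, p_r+1$; the geometric realization of this bookkeeping --- trading handles of the wrong index and cancelling complementary pairs --- is accomplished by Smale's handle cancellation theorem and the Whitney trick, which apply because the hypotheses $2p_k - p_j + 2 \le m$ and $2p_k - p_j \ge 1$ place all the relevant attaching and belt spheres in the metastable range. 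After this normalization $W$ carries one $0$-handle and exactly $r$ handles of indices $p_k+1$, whose attaching regions are embeddings $S^{p_k} \times D^{q_k+1} \hookrightarrow S^m = \partial D^{m+1}$, i.e.\ a framed link $f$; sliding handles to correct the homotopy class shows $\phi \simeq \phi_f$, whence $\omega(f) = [W,\phi]$.

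For injectivity, suppose $\omega(f_0) = \omega(f_1)$, so there is an orientation-preserving diffeomorphism $h \colon V_0 \cong V_1$ with $\phi_1 \simeq h \circ \phi_{f_0}$. I would transport the handle decomposition of $V_0$ through $h$ and compare it with the given one on $V_1$. In the \emph{strict} range $2p_k - p_j + 2 < m$ and $2p_k - p_j > 1$, each attaching sphere $S^{p_k} \hookrightarrow S^m$ is determined up to isotopy by the homotopy class it represents --- this is Haefliger's metastable isotopy theorem for spheres --- and the framings are likewise determined up to homotopy by the associated Stiefel-manifold data. Isotopy extension then upgrades these componentwise isotopies to an ambient isotopy carrying $f_0$ to $f_1$, so that $[f_0] = [f_1]$ in $E^m(\sqcup_{k=1}^r S^{p_k} \times D^{q_k+1})$.

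The hard part will be the normalization of the handle decomposition while keeping track of the marking $\phi$ rather than merely the diffeomorphism type of $W$. Cancelling or trading a handle of index $p_k+1$ against one of index $p_j+1$ requires Whitney discs whose general-position dimension is controlled precisely by $2p_k - p_j$, so the stated inequalities are exactly the thresholds at which these moves first become available (the non-strict inequalities, yielding surjectivity) and at which they become unique up to isotopy (the strict inequalities, yielding injectivity). Carrying the homotopy equivalence $\phi$ through every handle slide and Whitney move --- and checking that the induced framed links remain well defined up to isotopy --- is the most delicate bookkeeping in the argument.
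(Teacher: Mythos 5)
The paper does not prove this statement: it is quoted verbatim from Haefliger \cite[Theorem 6.2]{Hae66C}, whose argument does not normalize a Morse function but instead represents the marking $\phi$ by an \emph{embedding} of the wedge $\vee_{k=1}^r S^{p_k+1}$ into the $(m+1)$-manifold $W$ and takes a regular neighbourhood; the complement of that neighbourhood is an $h$-cobordism onto $\partial W$, so the neighbourhood (a handlebody, hence $V_f$ for some framed link $f$) is all of $W$ with $\phi_f\simeq\phi$ built in. The inequalities $2p_k-p_j+2\le m$ are exactly Haefliger's metastable conditions for compressing $\phi$ to an embedding of the wedge into a $(\min_j p_j)$-connected $(m+1)$-manifold, and the strict inequalities are the corresponding isotopy conditions. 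They are \emph{not} the thresholds for Smale's handle cancellation or the Whitney trick, which need only simple connectivity and $m+1\ge 6$; your account of where the hypotheses enter is therefore misplaced, and this is not a cosmetic issue, because it is precisely the embedding/isotopy theorem for the wedge that your sketch replaces with moves that do not see these inequalities at all.

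Two steps in your sketch would actually fail. For injectivity you assert that each attaching sphere $S^{p_k}\hookrightarrow S^m$ is determined up to isotopy by its homotopy class; but every such sphere is null-homotopic in $S^m$ (as $p_k<m$), so this principle would make every link trivial and the conclusion absurd. The isotopy uniqueness one needs is for embeddings of the wedge $\vee_k S^{p_k+1}$ in $W^{m+1}$ homotopic to $\phi$, from which isotopy of the framed links in $S^m=\partial D^{m+1}$ is then deduced. For surjectivity, normalizing a handle decomposition of $W$ produces \emph{some} framed link $f$ with $W\cong V_f$, i.e.\ surjectivity onto the set $\mathcal{H}_+$ of diffeomorphism classes, but the theorem asserts surjectivity onto $\mathcal{T}^{m+1}(\vee_k S^{p_k+1})$, where the marking matters: the given $\phi$ may differ from $\phi_f$ by an arbitrary element of $\mathcal{E}(\vee_k S^{p_k+1})$, and "sliding handles to correct the homotopy class" realizes only a restricted class of self-equivalences. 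Closing this gap amounts to constructing the handle structure so that its spine is homotopic to $\phi$ from the outset --- which is what the embed-the-wedge argument does and what your Morse-theoretic normalization does not.
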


\noindent
As a consequence of our calculations of $E^m(\sqcup_{k=1}^r S^{p_k} \times D^{q_k + 1}) \tensor \Q$ we immediately obtain the following:

\begin{corollary} \label{cor-thickfinite1}
Suppose that $2p_k - p_j + 2 \leq m$ and $2p_k - p_j \geq 1$ for all $k, j = 1, \dots, r$.  If the multi-index $(p_1, \dots, p_r)$ fails to satisfy each of the conditions of Corollary~\ref{cor-globalframed}
then both of the sets $\mathcal{T}^{m+1}(\vee_{k=1}^r S^{p_k+1})$ and $\mathcal{H}_+(m+1; p_1+1, \dots, p_r + 1)$ are finite.
\end{corollary}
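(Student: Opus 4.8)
The plan is to chain together Haefliger's surjectivity result, the finiteness criterion of Corollary~\ref{cor-globalframed}, and the quotient description of $\mathcal{H}_+$. First I would observe that $q_k + 1 = m - p_k$, so that the domain of the attaching map $\omega$ is precisely the group of (fully) framed links $E^m(\sqcup_{k=1}^r S^{p_k} \times D^{m - p_k})$, which is the group whose finiteness is governed by Corollary~\ref{cor-globalframed}. Under the standing hypotheses $2p_k - p_j + 2 \le m$ and $2p_k - p_j \ge 1$ for all $k,j$, the cited theorem \cite[Theorem 6.2]{Hae66C} guarantees that $\omega$ is surjective onto $\mathcal{T}^{m+1}(\vee_{k=1}^r S^{p_k+1})$; note that we need only the surjectivity half of that theorem, not injectivity, so the weaker (non-strict) inequalities suffice.

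Next I would invoke Corollary~\ref{cor-globalframed}. Since the multi-index $(p_1, \dots, p_r)$ fails every listed condition, no subsequence $(k_1, \dots, k_s)$ meets any of the three infiniteness criteria, and hence the group $E^m(\sqcup_{k=1}^r S^{p_k} \times D^{m-p_k})$ has rank zero, i.e.\ it is finite. Because the image of a finite set under a map is finite, the surjectivity of $\omega$ then forces $\mathcal{T}^{m+1}(\vee_{k=1}^r S^{p_k+1})$ to be finite as well.

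Finally I would pass to handlebodies via the forgetful bijection
\[ \mathcal{T}^{m+1}(\vee_{k=1}^r S^{p_k+1}) / \mathcal{E}(\vee_{k=1}^r S^{p_k + 1}) \equiv \mathcal{H}_+(m+1; p_1 + 1, \dots, p_r + 1), \]
which exhibits $\mathcal{H}_+(m+1; p_1+1, \dots, p_r + 1)$ as the orbit set of an action of $\mathcal{E}(\vee_{k=1}^r S^{p_k+1})$ on the finite set $\mathcal{T}^{m+1}(\vee_{k=1}^r S^{p_k+1})$; a quotient of a finite set is finite, so $\mathcal{H}_+$ is finite too. The argument is essentially formal, and I do not expect a serious obstacle: the only points requiring care are matching the dimension hypotheses of the corollary exactly to those needed for surjectivity of $\omega$, and checking that finiteness is preserved first under passing to the surjective image and then under passing to the $\mathcal{E}$-orbit set.
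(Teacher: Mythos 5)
Your proposal is correct and matches the argument the paper intends (the paper states the corollary follows ``immediately'' from the finiteness of $E^m(\sqcup_{k=1}^r S^{p_k}\times D^{q_k+1})$, Haefliger's surjectivity of $\omega$, and the orbit-set description of $\mathcal{H}_+$). Your chain --- finite framed-link group by Corollary~\ref{cor-globalframed}, surjective image under $\omega$, then quotient by the $\mathcal{E}(\vee_{k=1}^r S^{p_k+1})$-action --- is exactly that argument, with the dimension hypotheses correctly matched to the surjectivity half of \cite[Theorem 6.2]{Hae66C}.
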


\begin{corollary} \label{cor-thickfinit2}
Suppose that $2p_k - p_j +2 < m$ and $2p_k - p_j > 1$ for all $k, j = 1, \dots, r$, then $\mathcal{T}^{m+1}(\vee_{k=1}^r S^{p_k + 1})$ inherits the structure of a finitely generated abelian group via $\omega$ and the rank of this group equals to the right-hand side
of the equality from Theorem~\ref{th-framed}.
\end{corollary}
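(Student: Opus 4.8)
The plan is to show that under the strict hypotheses the attaching-handles map $\omega$ is a bijection, then to transport the group structure across it and read off the rank from Theorem~\ref{th-framed}. The point is that this corollary is a formal consequence of Haefliger's theorem \cite[Theorem~6.2]{Hae66C} (quoted just before Corollary~\ref{cor-thickfinite1}) together with our rank computation for framed links, so no new geometric input is required.

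First I would identify the domain correctly. Since $q_k = m - p_k - 1$ we have $D^{q_k+1} = D^{m-p_k}$, so $E^m(\sqcup_{k=1}^r S^{p_k} \times D^{q_k+1})$ is precisely the group of framed links $E^m(\sqcup_{k=1}^r S^{p_k} \times D^{l_k})$ with $l_k = m - p_k$ for every $k$. By the discussion of Section~\ref{ssec-framed} this is a finitely generated abelian group. Next I would verify the hypotheses of \cite[Theorem~6.2]{Hae66C}: the assumptions $2p_k - p_j + 2 < m$ and $2p_k - p_j > 1$ for all $k,j$ are exactly the injectivity conditions, and they imply the non-strict inequalities $2p_k - p_j + 2 \le m$ and $2p_k - p_j \ge 1$, which are the surjectivity conditions. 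Hence $\omega$ is simultaneously injective and surjective, i.e.\ a bijection.

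Finally, using the bijection $\omega$ I would transport the group structure from $E^m(\sqcup_{k=1}^r S^{p_k} \times D^{q_k+1})$ onto $\mathcal{T}^{m+1}(\vee_{k=1}^r S^{p_k+1})$, declaring $[W_1]\cdot[W_2] := \omega(\omega^{-1}[W_1] + \omega^{-1}[W_2])$. By construction $\omega$ then becomes an isomorphism of finitely generated abelian groups, so the two groups have equal rank. Applying Theorem~\ref{th-framed} with $l_k = m - p_k$ gives
\[ \mathrm{rk}\,\mathcal{T}^{m+1}(\vee_{k=1}^r S^{p_k+1}) = \mathrm{rk}\, E^m(\sqcup_{k=1}^r S^{p_k}) + \sum_{k=1}^r \mathrm{rk}\,\pi_{p_k}(V_{m-p_k,\,m-p_k}), \]
which is the right-hand side of Theorem~\ref{th-framed} for these values of $l_k$, as claimed.

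The hard part, such as it is, is purely bookkeeping: one must correctly read off the disc dimension so that Theorem~\ref{th-framed} is applied with the intended $l_k = m - p_k$, and one must check that the single set of strict hypotheses simultaneously delivers both injectivity and surjectivity of $\omega$ (this is why the \emph{strict} inequalities are needed here, in contrast to Corollary~\ref{cor-thickfinite1}, where the non-strict surjectivity conditions suffice for a finiteness conclusion). There is no substantive topological obstacle beyond invoking the two cited results.
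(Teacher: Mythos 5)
Your proposal is correct and matches the paper's (implicit) argument exactly: the paper treats this corollary as an immediate consequence of the quoted Haefliger theorem, noting that the strict hypotheses give both injectivity and surjectivity of $\omega$, and then reading off the rank from Theorem~\ref{th-framed} with $l_k = m - p_k$. Your careful bookkeeping of $D^{q_k+1}=D^{m-p_k}$ and the transport of the group structure along the bijection is precisely what the paper leaves to the reader.
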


\begin{example}
By Corollary~\ref{cor-globalframed}, $E^8(S^5 \times D^3 \sqcup S^5 \times D^3)$ is finite and so both the group $\mathcal{T}^9(S^6 \vee S^6)$ and the set $\mathcal{H}_+(9; 6,6)$ are finite.
\end{example}

\begin{remark}
Computing the action of the group $\mathcal{E}(\vee_{k = 1}^r S^{p_k + 1})$ on the set $\mathcal{T}^{m+1}(\vee_{k = 1}^r S^{p_k +1})$ is an interesting problem we shall address in \cite{CFS11}.  In particular, we conjecture that the converse of the second sentence of
Corollary~\ref{cor-thickfinite1} holds under the dimension assumptions of the first sentence.
\end{remark}

\subsection{The mapping class group
of a connected sum of ``tori''} \label{ssec-mapping}
Recall that the group of framed links, $E^m(\sqcup_{k=1}^r S^{p_k} \times D^{q_k + 1})$, consists of smooth isotopy classes of smooth orientation preserving embeddings
\[ f \colon \! \sqcup_{k=1}^r S^{p_k} \times D^{q_k + 1} \to S^m.\]
In this subsection we describe the relationship of $E^m(\sqcup_{k=1}^r S^{p_k} \times D^{q_k+1})$ to certain mapping class groups.

We begin with some necessary new definitions.  Let us first expand the definition of link to include links in homotopy spheres.  Let $\Sigma^m$ be a closed smooth oriented homotopy $m$-sphere and recall that $\Theta_m$ denotes the group of oriented diffeomorphism classes of such homotopy spheres.  Define
\[ E^{\Theta_m}(\sqcup_{k=1}^r S^{p_k} \times D^{q_k + 1}) := \{ f \colon \! \sqcup_{k=1}^r S^{p_k} \times D^{q_k + 1} \to \Sigma^m \}/ \simeq \]
to be the set of equivalence classes of orientation preserving embeddings of $\sqcup_{k=1}^r S^{p_k} \times D^{q_k + 1}$ into some homotopy sphere where embeddings $f_0$ and $f_1$ with targets $\Sigma^m_0$ and $\Sigma^m_1$ respectively are equivalent if there is an orientation preserving diffeomorphism $H \colon \Sigma^m_0 \cong \Sigma^m_1$ such that $H \circ f_0 = f_1$.

The isotopy extension theorem ensures that there is a well-defined map
%
\[ I \colon E^{m}(\sqcup_{k=1}^r S^{p_k} \times D^{q_k + 1}) \to E^{\Theta_m}(\sqcup_{k=1}^r S^{p_k} \times D^{q_k + 1}). \]
Moreover there is a group structure on $E^{\Theta_m}(\sqcup_{k=1}^r S^{p_k} \times D^{q_k + 1})$ defined in the same was as the group structure on $E^m(\sqcup_{k=1}^r S^{p_k} \times D^{q_k + 1})$ but also taking connected sum of the ambient homotopy sphere.  In particular $I$ is a homomorphism and forgetting the embedding defines a homomorphism ${\rm Forg} \colon E^{\Theta_m}(\sqcup_{k=1}^r S^{p_k} \times D^{q_k + 1}) \to \Theta_m$.  Moreover ${\rm Forg}$ is split by the homorphism $T \colon \Theta_m \to E^{\Theta_m}(\sqcup_{k=1}^r S^{p_k} \times D^{q_k + 1})$ defined by mapping a homotopy sphere $\Sigma^m$ to the trivial framed link in $\Sigma^m$, i.e., the framed link which bounds disjoint embedded discs.

\begin{lemma} \label{lem:embext}
The sum of the homomorphisms $I$ and $T$ defines an isomorphism
\begin{equation} \label{seq:framed}
I \oplus T \colon  E^m(\sqcup_{k=1}^r S^{p_k} \times D^{q_k + 1}) \oplus \Theta_m \,\, \cong \,\,  E^{\Theta_m}(\sqcup_{k=1}^r S^{p_k} \times D^{q_k + 1}).
\end{equation}
\end{lemma}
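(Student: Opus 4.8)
The plan is to exploit the splitting of the forgetful homomorphism ${\rm Forg}$ and to identify the image of $I$ with its kernel. Write $G := E^m(\sqcup_{k=1}^r S^{p_k}\times D^{q_k+1})$ and $G^{\Theta} := E^{\Theta_m}(\sqcup_{k=1}^r S^{p_k}\times D^{q_k+1})$. Since ${\rm Forg}\circ T = {\rm Id}_{\Theta_m}$, the homomorphism ${\rm Forg}$ is a split surjection and $G^{\Theta} = \ker({\rm Forg})\oplus T(\Theta_m)$. Because $I$ takes its values among framed links in the standard sphere $S^m$, which represents $0\in\Theta_m$, we have ${\rm Forg}\circ I = 0$, so $\image(I)\subseteq\ker({\rm Forg})$. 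Applying ${\rm Forg}$ to a relation $I(a)+T(b)=0$ forces $b=0$ and then $I(a)=0$; likewise every $x\in G^{\Theta}$ differs from $T({\rm Forg}(x))$ by an element of $\ker({\rm Forg})$. Hence $I\oplus T$ is an isomorphism if and only if $I$ restricts to an isomorphism $G\xrightarrow{\ \cong\ }\ker({\rm Forg})$, and it remains to prove the latter.

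For surjectivity onto $\ker({\rm Forg})$, I would start with a framed link $g\colon\sqcup_{k=1}^r S^{p_k}\times D^{q_k+1}\hookrightarrow\Sigma^m$ representing a class in $\ker({\rm Forg})$. Then $[\Sigma^m]=0\in\Theta_m$, so there is an orientation preserving diffeomorphism $\phi\colon\Sigma^m\xrightarrow{\cong}S^m$. The composite $\phi\circ g$ is a framed link in the standard sphere, hence an element of $G$, and $\phi$ itself witnesses the equivalence $I([\phi\circ g])=[g]$ in $G^{\Theta}$. Thus $I$ surjects onto $\ker({\rm Forg})$.

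The crux is injectivity of $I$. Suppose $f_0,f_1\colon\sqcup_{k=1}^r S^{p_k}\times D^{q_k+1}\hookrightarrow S^m$ satisfy $I([f_0])=I([f_1])$; by the definition of the equivalence relation on $G^{\Theta}$ there is an orientation preserving diffeomorphism $H\colon S^m\to S^m$ with $H\circ f_0=f_1$. The image of $f_0$ is a compact proper subset of $S^m$, so I may choose a small embedded disc $D\subset S^m$ disjoint from $\image(f_0)$ and set $D'=\overline{S^m\setminus D}$, an embedded disc containing $\image(f_0)$. The key topological input is that $H$ is isotopic within ${\rm Diff}^+(S^m)$ to a diffeomorphism $H'$ that is the identity on $D'$: indeed, restriction to $D'$ makes ${\rm Diff}^+(S^m)$ fiber, via isotopy extension, over the space of orientation preserving embeddings of $D'$ in $S^m$, which is homotopy equivalent to the connected frame bundle of $S^m$, so $\pi_0$ of the fibre (the diffeomorphisms that are the identity on $D'$) surjects onto $\pi_0{\rm Diff}^+(S^m)$. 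Composing such an isotopy $H_t$ with $f_0$ yields a smooth isotopy from $f_1=H\circ f_0$ to $H'\circ f_0$; and since $H'$ is the identity on $D'\supseteq\image(f_0)$ we have $H'\circ f_0=f_0$. Therefore $[f_0]=[f_1]$ in $G$, which gives injectivity of $I$.

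The main obstacle is precisely this injectivity step, and specifically the fact that $H$ need not be isotopic to the identity, as $\pi_0{\rm Diff}^+(S^m)$ can be nontrivial. The argument circumvents this by localizing the potentially exotic part of $H$ into a disc $D$ disjoint from the link: since $\image(f_0)$ is a proper compact subset there is room to do this, and then $H$ may be replaced by a diffeomorphism fixing the link pointwise without changing the isotopy class of $H\circ f_0$. Note that no dimension hypothesis beyond those standing in the section is needed here, since the fibration input only uses connectedness of the frame bundle of $S^m$. Once injectivity and surjectivity onto $\ker({\rm Forg})$ are in hand, the algebraic reduction of the first paragraph completes the proof.
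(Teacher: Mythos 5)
Your proposal is correct, and its overall skeleton is the same as the paper's: both reduce the lemma, via the splitting of ${\rm Forg}$, to the injectivity of $I$, and both prove injectivity by neutralising the ambient diffeomorphism $H$ away from the link. The decisive step is, however, executed differently. You isotope $H$ itself to a diffeomorphism $H'$ that is the identity on a large disc $D'$ containing the image of $f_0$ (using the codimension-zero disc theorem together with the fibration $\mathrm{Diff}_+(S^m)\to \mathrm{Emb}_+(D',S^m)$ over a connected base), so that $H'\circ f_0=f_0$ and the ambient isotopy carries $f_1$ to $f_0$ directly. The paper instead leaves the relation $H\circ f_0=f_1$ untouched: it isotopes $H$ to be the identity on a \emph{small} disc $D$ disjoint from the image and then composes with a diffeomorphism supported in $D$ representing the inverse class of $H$, invoking Wall's theorem \cite{Wa63} that every orientation-preserving diffeomorphism of $S^m$ is pseudo-isotopic to one fixing an $m$-disc pointwise. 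Your route buys a slightly cleaner conclusion, since $H'$ literally fixes the link and no appeal to pseudo-isotopy (nor, implicitly, to the fact that concordance implies isotopy in codimension $\geq 3$, which the paper's pseudo-isotopy input strictly requires to finish) is needed; the cost is that you must quote the Cerf--Palais disc theorem and isotopy extension explicitly rather than cite Wall. Both arguments are sound; yours is, if anything, the tighter of the two.
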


\begin{proof}
It remains only to prove that $I$ is injective.  Suppose that $f_0, f_1 \colon \! \sqcup_{k=1}^r S^{p_k} \times D^{q_k + 1} \to S^m$ are embeddings such $I([f_0]) = I([f_1])$.  This means that there is an orientation preserving diffeomorphism $H \colon S^m \to S^m$ such that $f_1 = f_0 \circ H$.  If in addition $H$ were isotopic to the identity we would be done.  But since $H$ is orientation preserving we may assume after a small isotopy that it is the identity on on a small disc $D^m \subset {\rm Int}(S^m - f_0(\sqcup_{k=1}^r S^{p_k} \times D^{q_k + 1}))$.  We may now modify $H$ by any diffeomorphism of $D^m$ which is the identity on a neighbourhood of the boundary $\partial D^m$, keeping the diffeomorphisms the same outside of this disc.  Now it is well-know that every orientation preserving diffeomorphism of $S^m$ is pseudo-isotopic to a diffeomorphism which point-wise fixes some $m$-disc; see \cite[Hilfsatz\,p.265]{Wa63}.  In this way we can construct a diffeomorphism $H'$ such that $f_1 = f_0 \circ H'$ and such that $H'$ is isotopic to the identity.
\end{proof}

We also need to pass from disconnected manifolds to connected manifolds.  Hence we define
\[ W_0 : =  \natural_{k=1}^r (S^{p_k} \times D^{q_k +1}), \quad W_1 : = \natural_{k=1}^r (D^{p_k+1} \times S^{q_k} ) \quad \text{and} \quad M := \sharp_{k=1}^r(S^{p_k} \times S^{q_k}).\]
Note that there are canonical identifications $\partial W_0 = M = \partial W_1$.  We recall how the standing assumption of high-codimension gives rise to a bijection
\begin{equation} \label{eq:alpha}
 \alpha \colon E^{\Theta_m}(\sqcup_{k=1}^r S^{p_k} \times D^{q_k + 1}) \equiv E^{\Theta_m}(W_0)
\end{equation}
where $E^{\Theta_m}(W_0)$ is the set of isotopy classes of orientation preserving embeddings of $W_0$ into some homotopy $m$-sphere.  Given an embedding $f \colon \! \sqcup_{k=1}^r S^{p_k} \times D^{p_k + 1} \to \Sigma^m$ as above, choose smoothly embedded paths between each component of $f$ and remove a small tubular neighbourhood of each path: since each component of $f$ has codimension at least $3$ the choice of such a path does not play a role up to isotopy.  We obtain an embedding
\[ F \colon W_0 \to \Sigma^m. \]
Moreover, two such embeddings $f$ and $f'$ are isotopic if and only if the corresponding embeddings $F$ and $F'$ of $W_0$ are isotopic.  Conversely, given an embedding $F \colon W_0 \to \Sigma_m$ we may delete a set of appropriate $m$-discs from $W_0$ to obtain an embedding $f \colon\! \sqcup_{k=1}^r S^{p+k} \times D^{q_k + 1} \to \Sigma^m$.  In this way we obtain the the bijection $\alpha$ above.

We now relate $E^{\Theta_m}(W_0)$ to the mapping class group $\wt{\pi}_0\Diff_+(M)$ of pseudo-isotopy classes of orientation preserving diffeomorphisms of $M$.  Given $[h] \in \wt{\pi}_0\Diff_+(M)$ form the oriented smooth homotopy sphere
\[ \Sigma_{[h]} := W_0 \cup_{h} W_1 \]
where we orient $\Sigma_{[h]}$ using a fixed orientation on $W_0$.
%
%
Now define the map
\[ \beta \colon \wt{\pi}_0 \Diff_+(M)  \to E^{\Theta_m}(W_0), \quad [h] \mapsto  (W_0 \to \Sigma_{[h]}) \]
which maps $[h]$ to the obvious embedding of $W_0$ into $\Sigma_{[h]}$.  We shall see that if $h$ extends over $W_1$ then $\Sigma_{[h]} \cong S^m$ and $\beta([h])$ is the standard embedding.  Hence we introduce the homomorphisms defined by restriction of diffeomorphisms:
\[ R \colon \wt{\pi}_0\Diff_+(W_1) \to \wt{\pi}_0\Diff_+(M) .\]

\begin{lemma} \label{prop:mcg}
Suppose that $p_k \geq [m/2]$ for each $k = 1, \dots, r$.  Then the map $\beta$ above induces a bijection
\[ \bar \beta \colon R(\wt{\pi}_0\Diff_+(W_1)) \slash \wt{\pi}_0\Diff_+(M) \equiv E^{\Theta_m}(W_0).\]
\end{lemma}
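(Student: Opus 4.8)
The plan is to analyze the complement of an embedded copy of $W_0$ inside a homotopy sphere and to recognize that complement as a standard copy of $W_1$; this identifies every embedding with some gluing $W_0\cup_h W_1=\Sigma_{[h]}$, and the coset ambiguity will arise exactly from the diffeomorphisms of the complement. Throughout I use the hypothesis $p_k\ge[m/2]$ only through its consequence $q_k=m-p_k-1$ with $2q_k<m$, which places the spheres $S^{q_k}$ in the stable range so that the uniqueness theory of thickenings applies.

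First I would verify that $\beta$ descends to the asserted quotient. Given $\phi\in\Diff_+(W_1)$ and a gluing $\Sigma_{[h]}=W_0\cup_h W_1$, the map which is the identity on $W_0$ and equals $\phi$ on $W_1$ is, after a routine collar smoothing, an orientation preserving diffeomorphism $\Sigma_{[h]}\to\Sigma_{[R(\phi)\circ h]}$ carrying the standard embedding of $W_0$ to the standard embedding. Since $\Sigma_{[h]}$ depends only on the pseudo-isotopy class of $h$, this yields $\beta(R(\phi)\cdot[h])=\beta([h])$, so $\beta$ factors through $\bar\beta$. For injectivity of $\bar\beta$, suppose $\beta([h_1])=\beta([h_2])$, so there is an orientation preserving diffeomorphism $H\colon\Sigma_{[h_1]}\to\Sigma_{[h_2]}$ restricting to the identity on the embedded $W_0$, and hence on $M=\partial W_0$. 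Then $H$ carries the complement $W_1\subset\Sigma_{[h_1]}$ diffeomorphically onto the complement $W_1\subset\Sigma_{[h_2]}$, defining $\phi:=H|_{W_1}\in\Diff_+(W_1)$; comparing the two gluing identifications of $M$ along $\partial W_0$ gives $\phi|_M=h_2\circ h_1^{-1}$, i.e. $R(\phi)\cdot[h_1]=[h_2]$, so the two classes lie in one coset.

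The main work is surjectivity. Given an embedding $F\colon W_0\hookrightarrow\Sigma^m$ into a homotopy sphere, set $C:=\overline{\Sigma^m\setminus F(W_0)}$, a compact manifold with $\partial C=M$. By Alexander duality $C$ has the integral homology of $\vee_{k=1}^r S^{q_k}$, and since $W_0$, $C$ and $\Sigma^m$ are simply connected (using $p_k,q_k\ge1$ and van Kampen) $C$ is homotopy equivalent to $\vee_{k=1}^r S^{q_k}$. I then claim $C\cong W_1$: in the range $2q_k<m$ a compact thickening of $\vee_{k=1}^r S^{q_k}$ is determined up to diffeomorphism by the stable normal bundle data of its core spheres, and here those data are trivial because the core $q_k$-spheres are dual to the trivially framed $p_k$-spheres of the product $W_0=\natural_{k=1}^r S^{p_k}\times D^{q_k+1}$. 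The normal bundle of a core $q_k$-sphere has rank $p_k+1=m-q_k>q_k$, so stable triviality upgrades to genuine triviality, whence $C\cong\natural_{k=1}^r D^{p_k+1}\times S^{q_k}=W_1$. Choosing a diffeomorphism $g\colon W_1\to C$ and taking for $h$ the self-diffeomorphism of $M=\partial W_0=\partial W_1$ determined by $g|_M$ exhibits $\Sigma^m\cong W_0\cup_h W_1=\Sigma_{[h]}$ with $F$ carried to the standard embedding, so $F=\beta([h])$ and $\bar\beta$ is onto.

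The main obstacle is exactly this recognition of the complement $C$ as the standard handlebody $W_1$, including the triviality of the normal bundles of its core spheres. This is the sole place where the dimension hypothesis $p_k\ge[m/2]$ is indispensable: without the stable range $2q_k<m$ there may be inequivalent thickenings of $\vee_{k=1}^r S^{q_k}$ and the complement need not be the standard $W_1$, so both the homotopy-sphere property of $\Sigma_{[h]}$ and the surjectivity argument would fail. The remaining steps are formal once this recognition, together with the standard fact that pseudo-isotopic gluings yield diffeomorphic results, is in hand.
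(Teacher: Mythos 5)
Your proposal is correct and follows essentially the same route as the paper: well-definedness and injectivity by extending/restricting diffeomorphisms across the decomposition $\Sigma_{[h]}=W_0\cup_h W_1$, and surjectivity by identifying the complement of the embedded $W_0$ as a stable thickening of $\vee_{k=1}^r S^{q_k}$ in the sense of Wall and recognizing it as the trivial thickening $W_1$ (the paper deduces triviality directly from the stable parallelizability of $\Sigma^m$, while you argue via the normal data of the core spheres, a minor variation).
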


\begin{corollary} \label{cor:mcg}
If $m \geq 5$, the subgroup $R(\wt{\pi}_0\Diff_+(W_1)) \subset \wt{\pi}_0\Diff_+(M)$ is of finite index if and only if $(p_1, \dots, p_k)$ fails to satisfy each of the conditions of Corollary~\ref{cor-globalframed}.
\end{corollary}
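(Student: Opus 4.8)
The plan is to translate the statement about the index of $R(\wt{\pi}_0\Diff_+(W_1))$ into a finiteness question for a group of framed links and then to quote Corollary~\ref{cor-globalframed} directly. Throughout I work under the standing hypotheses of Lemma~\ref{prop:mcg}, in particular $p_k\ge[m/2]$ for each $k$, so that the bijection $\bar\beta$ is at our disposal.

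First I would recall that a subgroup has finite index exactly when its coset space is finite, and that this involves only the cardinality of the coset space. By Lemma~\ref{prop:mcg} the map $\bar\beta$ is a bijection from the coset space of $R(\wt{\pi}_0\Diff_+(W_1))$ in $\wt{\pi}_0\Diff_+(M)$ onto the set $E^{\Theta_m}(W_0)$. Since $R(\wt{\pi}_0\Diff_+(W_1))$ is not assumed normal, I would stress that $\bar\beta$ is used purely as an identification of sets; nonetheless it shows that $R(\wt{\pi}_0\Diff_+(W_1))$ has finite index in $\wt{\pi}_0\Diff_+(M)$ if and only if $E^{\Theta_m}(W_0)$ is a finite set.

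Next I would run the chain of identifications already established in this subsection. The bijection $\alpha$ of~\eqref{eq:alpha} gives $E^{\Theta_m}(W_0)\equiv E^{\Theta_m}(\sqcup_{k=1}^r S^{p_k}\times D^{q_k+1})$, and Lemma~\ref{lem:embext} supplies the isomorphism $E^{\Theta_m}(\sqcup_{k=1}^r S^{p_k}\times D^{q_k+1})\cong E^m(\sqcup_{k=1}^r S^{p_k}\times D^{q_k+1})\oplus\Theta_m$. As $m\ge5$, the group $\Theta_m$ of homotopy $m$-spheres is finite by Kervaire--Milnor, so the summand $\Theta_m$ is irrelevant to finiteness. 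Hence $E^{\Theta_m}(W_0)$ is finite if and only if the framed link group $E^m(\sqcup_{k=1}^r S^{p_k}\times D^{q_k+1})$ is finite; and since $q_k+1=m-p_k$, this is precisely the group $E^m(\sqcup_{k=1}^r S^{p_k}\times D^{m-p_k})$ treated in Corollary~\ref{cor-globalframed}.

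Finally I would apply Corollary~\ref{cor-globalframed}, which states that $E^m(\sqcup_{k=1}^r S^{p_k}\times D^{m-p_k})$ is infinite if and only if some subsequence $(k_1,\dots,k_s)$ satisfies one of its three arithmetic conditions; equivalently, this group is finite if and only if $(p_1,\dots,p_r)$ fails each of those conditions. Chaining the three equivalences above then yields the corollary. I do not expect a genuine obstacle here: the argument is bookkeeping of the bijections set up earlier, and the only external ingredient is the finiteness of $\Theta_m$. The one point that requires care is the distinction between a group index and the cardinality of a coset space, since $R(\wt{\pi}_0\Diff_+(W_1))$ is not assumed normal, so the whole reduction must be phrased at the level of sets via $\bar\beta$.
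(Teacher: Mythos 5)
Your argument is correct and follows essentially the same route as the paper: Lemma~\ref{prop:mcg} identifies the coset space with $E^{\Theta_m}(W_0)$, which via the bijection $\alpha$ of~\eqref{eq:alpha}, the splitting~\eqref{seq:framed} of Lemma~\ref{lem:embext}, and the Kervaire--Milnor finiteness of $\Theta_m$ reduces the question to the finiteness of $E^m(\sqcup_{k=1}^r S^{p_k}\times D^{q_k+1})$, settled by Corollary~\ref{cor-globalframed}. Your explicit remark that the reduction is at the level of sets (since $R(\wt{\pi}_0\Diff_+(W_1))$ need not be normal) is a worthwhile clarification but does not change the argument.
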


\begin{proof}
Let us prove the ``if'' part; the ``only if'' part is analogous. By~\cite{K-M63} the group $\Theta_m$ is finite and by assumption Corollary~\ref{cor-globalframed} states that $E^m(\sqcup_{k=1}^r S^{p_k} \times D^{q_k+1})$ is finite.  By ~\eqref{seq:framed} and ~\eqref{eq:alpha} we see that $E^{\Theta_m}(W_0)$ is finite.  Now apply Lemma~\ref{prop:mcg}.
\end{proof}

\begin{proof}[Proof of Lemma \ref{prop:mcg}]
We first show that $\bar \beta$ is well-defined.  Let $[h_a]$ and $[h_b]$ be elements of $\wt{\pi}_0\Diff(M)$ and suppose that  $h_b = h \circ h_a$ where $h \colon M \cong M$ extends to a diffeomorphism $H \colon W_1 \cong W_1$.  Then have the diffeomorphism
\[ {\rm Id} \cup H \colon W_0 \cup_{h_a} W_1 \to W_0 \cup_{h_b} W_1 \]
which shows that $\beta([h_a]) = \beta([h_b])$.  It follows that each member of a coset of ${\rm Im}(R)$ is mapped to the same isotopy class of embedding via $\beta$ and hence $\bar \beta$ is well-defined.

Next we show that $\bar \beta$ is injective: suppose that $\bar \beta([h_a]) = \bar \beta([h_b])$.  Then there is a diffeomorphism
\[ G \colon W_0 \cup_{h_a} W_1 \cong W_0 \cup_{h_b} W_1 \]
such that $G|_{W_0} = {\rm Id}$.  It follows that $G(W_1) = W_1$ and that for $h : = R(G|_{W_1})$, $h_b = h \circ h_a$.  It follows that $[h_a]$ and $[h_b]$ lie in the same coset of $R(\wt{\pi}_0\Diff(W_1))$ and so $\bar \beta$ is injective.

Finally we show that $\bar \beta$ is surjective.  For any embedding $F \colon W_0 \to \Sigma^m$ let $C_F \subset \Sigma^m$ denote the closure of its complement.  Then $C_F$ is homotopy equivalent to the wedge $\vee_{k=1}^r S^{q_k}$.  Since $C_F$ is simply connected with non-empty simply-connected boundary it is the target of a thickening in the sense of \cite{Wa66}.  Moreover, as $p_k \geq [m/2]$ it follows that $q_k < [m/2]$ and thus $C_F$ is the target of a stable thickening of $\vee_{k=1}^r S^{q_k}$.  Now by \cite[Proposition 5.1]{Wa66} stable thickenings are classified by their stable tangent bundles, $C_F \subset \Sigma^m$ and $\Sigma^m$ is stably parallizable: it follows that $C_F$ is the target of the trivial thickening.  Hence for every such embedding $F$ there is a diffeomorphism
\[ H \colon C_F \cong W_1. \]
We define the diffeomorphism $h := (\partial H) \circ (\partial F) \colon M \cong M$.  It follows that the embedding $F \colon W_0 \to \Sigma$ is isotopic to the embedding $\bar \beta(h)$ and we are done.
%
\end{proof}

\subsection{Open problems} \label{ssec-open}

In this subsection we discuss some open problems related to the groups of links.  The first of these is the computation of the torsion group ${\rm Tors}\,E^m(\sqcup_{k=1}^r S^{p_k})$ which by the Haefliger sequence \cite[Theorem~1.1]{Hae66C} is a finite abelian group. Here we wish to point out that the analysis of the Haefliger sequence seems significantly more difficult than analysing the sequence tensored with $\Q$.  Specifically, our computation of $E^m(\sqcup_{k=1}^r S^{p_k}) \tensor \Q$ rested on the Lemma~\ref{lker}(a) which stated that the homomorphism $w$ of the exact sequence in Theorem \ref{thhae} is surjective.  This implied that the rational sequence of Theorem~\ref{thhae} splits into short exact sequences and also the rational splitting in Lemma \ref{lem:Hae-splits}.  However the integral Haefliger long exact sequence does split in this way in general.

\begin{lemma} \label{lem-integralnonsplit}
The homomorphism
$w \colon \! {}^0\!\Lambda_{(5, 5)}^{(2, 2)} \to {}^0\Pi_{6}^{(2, 2)}$ is not onto and so $0\ne\mu({}^0\Pi_6^{(2, 2)}) \subset E^7_{\rm B}(S^4 \sqcup S^4)$.
\end{lemma}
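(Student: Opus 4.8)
The plan is to deduce the statement from the exactness of the Haefliger sequence~\eqref{seq:PHae} together with an analysis of the bottom Hilton--Milnor summand of the target of $w$. Here the relevant parameters are $m=8$ and $p_1=p_2=5$, so the underlying wedge is $S^2\vee S^2$ and the generators $P_1,P_2$ of the associated free Lie algebra have degree $m-p_k-2=1$. By the portion of~\eqref{seq:PHae} reading ${}^0\!\Lambda_{(5,5)}^{(2,2)}\xrightarrow{w}{}^0\Pi_6^{(2,2)}\xrightarrow{\mu}E^7_{\rm B}(S^4\sqcup S^4)$ and exactness at ${}^0\Pi_6^{(2,2)}$, the homomorphism $\mu$ induces an injection $\operatorname{coker}w\hookrightarrow E^7_{\rm B}(S^4\sqcup S^4)$ with image $\mu({}^0\Pi_6^{(2,2)})$. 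Hence it suffices to prove that $w$ is \emph{not} onto, i.e.\ that $\operatorname{coker}w\ne0$.

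First I would make both groups explicit via the Hilton--Milnor theorem. Writing $B$ for the Hall basis of the free Lie ring on $P_1,P_2$, one has $\pi_g(S^2\vee S^2)\cong\bigoplus_{b\in B}\pi_g(S^{g(b)+1})$. The target ${}^0\Pi_6^{(2,2)}$ is the sum of those summands indexed by Hall words $b$ that involve both generators; in particular it contains the summand indexed by $b=[P_1,P_2]$, namely $\pi_6(S^3)\cong\mathbb{Z}/12$, whose $3$-primary part is $\mathbb{Z}/3$. The source ${}^1\Pi_5^{(2,2)}\oplus{}^2\Pi_5^{(2,2)}$ consists of the elements of $\pi_5(S^2\vee S^2)$ with vanishing $P_1$- (respectively $P_2$-) Hilton--Milnor component, and each group $\pi_5(S^{g(b)+1})$ feeding it is either $2$-primary (when $g(b)\le3$, since $\pi_5(S^2)=\pi_5(S^3)=\pi_5(S^4)=\mathbb{Z}/2$) or free (for the three weight-$4$ words, where $\pi_5(S^5)=\mathbb{Z}$). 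By Lemma~\ref{lker}(a) the map $w$ is rationally onto, so $\operatorname{coker}w$ is finite and the only possible obstruction to surjectivity is torsion.

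The heart of the argument is to show that the image of $w$ meets the summand $\pi_6(S^3)\cong\mathbb{Z}/12$ only in its $2$-primary part; equivalently, that the composite of $w$ with the projection ${}^0\Pi_6^{(2,2)}\to\pi_6(S^3)\to\mathbb{Z}/3$ vanishes. Localizing at $3$ annihilates the $2$-primary summands of the source, so it is enough to treat the free generators $[\alpha(b),\i_1]$ and $[\alpha(b),\i_2]$ arising from the weight-$4$ Hall words $b$. Expanding these iterated Whitehead products by bilinearity and the graded Jacobi and anti-symmetry identities --- and keeping in mind that $\alpha$ is \emph{not} multiplicative (Remark~\ref{rem:Hil-Mil}), so that self-brackets $[\i_i,\i_i]=\pm2\,\i_i\circ\eta$ can feed lower-degree summands --- one finds that every contribution to the $[P_1,P_2]$-summand is a composition involving an $\eta$-multiple and therefore lands in the $2$-primary subgroup of $\pi_6(S^3)$. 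Since the $3$-primary generator $\alpha_1\in\pi_6(S^3)$ is not an $\eta$-multiple, it is not hit; the projection ${}^0\Pi_6^{(2,2)}\to\mathbb{Z}/3$ is onto but vanishes on $\image w$, so $\operatorname{coker}w\twoheadrightarrow\mathbb{Z}/3\ne0$. By the reduction above this gives $0\ne\mu({}^0\Pi_6^{(2,2)})\subset E^7_{\rm B}(S^4\sqcup S^4)$, as required.

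The step I expect to be the main obstacle is exactly this last computation: controlling the $\pi_6(S^3)$-component --- a James--Hopf-type invariant --- of the weight-$5$ Whitehead products $[\alpha(b),\i_1]$, where the failure of $\alpha$ to be multiplicative forces one to track the self-bracket corrections instead of reading the answer off the free Lie ring. I expect all of these corrections to be visibly $2$-primary, so that the $3$-torsion class $\alpha_1$ persists in $\operatorname{coker}w$; making this bookkeeping rigorous, for instance via the EHP sequence for $S^2$ localized at the prime $3$, is where the real work lies.
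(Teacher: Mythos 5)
Your reduction via exactness of \eqref{seq:PHae} and your identification of the groups (source torsion all $2$-primary, target containing the summand $\pi_6(S^3)\cong\Z/12$ with $3$-torsion $\Z/3$) match the paper, and are correct. But the heart of your argument --- that the composite of $w$ with the projection ${}^0\Pi_6^{(2,2)}\to\pi_6(S^3)\to\Z/3$ vanishes because ``every contribution to the $[P_1,P_2]$-summand is a composition involving an $\eta$-multiple'' --- is exactly the step you do not carry out. You offer no computation of the Hilton--Hopf components of the weight-$5$ Whitehead products $[\alpha(b),\i_k]$, only the expectation that the corrections are visibly $2$-primary, and you acknowledge that making this rigorous ``is where the real work lies.'' As it stands this is a genuine gap: the claim that the $3$-local component vanishes is asserted, not proved.

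The missing idea is that no such computation is needed. You already observed that both the source and the target have free rank $6$; combined with Lemma~\ref{lker}(a) (equivalently, with Example~\ref{rem-E558}, which says $E^8_{\rm U}(S^5\sqcup S^5)$ is finite), this shows that $w\tensor{\rm Id}_\Q$ is not merely onto but an \emph{isomorphism}. Hence $\kernel w$ is contained in the torsion subgroup of ${}^0\!\Lambda^{(2,2)}_{(5,5)}$, which is $2$-primary. If $b\in{}^0\Pi_6^{(2,2)}$ had order $3$ and $b=w(a)$, then $3a\in\kernel w$ would be $2$-primary torsion, forcing $a$ itself to be $2$-primary torsion and hence $b=w(a)$ to have $2$-power order --- a contradiction. (Equivalently: localized at $3$ the source is free of rank $6$ while the target needs $7$ generators, so $w$ cannot be onto.) This is the paper's argument; it replaces your $3$-local Whitehead-product bookkeeping with a purely formal rank-and-torsion observation. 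Your route could in principle be completed, but until the $\pi_6(S^3)$-components of the six free generators are actually computed, the proof is not done.
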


\begin{proof}
By the Hilton--Milnor theorem and known computations of the homotopy groups of spheres it follows that the $3$-torsion in ${}^0\Pi_6^{(2, 2)}$ is isomorphic to $\Z/3$ whereas the torsion in ${}^0\!\Lambda_{(5, 5)}^{(2, 2)}$ is all $2$-primary.  Moreover, by Example \ref{rem-E558}, the homomorphism $w\tensor {\rm Id}_\Q \colon {}^0\!\Lambda_{(5, 5)}^{(2, 2)} \tensor \Q \to {}^0\Pi_{6}^{(2, 2)} \tensor \Q$ is an isomorphism.  It follows that the $3$-torsion in ${}^0\Pi_6^{(2, 2)}$ is not in the image of $w$.
\end{proof}




Of course, to understand a finite abelian group a key first step is to find the primes which appear in its order.  We therefore post the following

\begin{problem}
Find a function of two positive integer variables $f(x, y)$ such that if a prime $q$ satisfies the inequality $q > f(m, {\rm min}\{ p_1, \dots, p_r \})$ then the group $E^m(\sqcup_{k=1}^r S^{p_k})$ contains no $q$-torsion.
\end{problem}

We now briefly consider the problem of classifying links whose components need not be spheres.  Let $N_1, \dots, N_r$ be a set of closed connected smooth manifolds with $N_k$ of dimension $p_k$ and consider the map
\[  C \colon E^m(\sqcup_{k=1}^r N_k) \to \sqcup_{k=1}^r E^m(N_k) \]
which associates to each isotopy class of embedding the set of isotopy classes of its components.   For a survey of what is known about $E^m(N_k)$ see \cite{Sko07L} and for the special cases, $(m; p_k) = (6, 3)$ or $(7, 4)$ see \cite{Sko07F,Sko08Z,CrSk08}.  If $E^m(N_k)$ is known for each $N_k$ then for a fixed isotopy classes $[f_k] \in E^m(N_k)$, $k = 1, \dots, r$, the set $C^{-1}([f_1], \dots, [f_r])$ becomes of particular interest.


In the case of spheres recall that the links with unknotted components are defined by the equality
\[ E^m_{\mathrm{U}}(\sqcup_{k=1}^r S^{p_k}) = C^{-1}(0, \dots, 0) \]
where $0 \in E^m(S^{p_k})$ denotes the class of the unknot.  Now the operation of component-wise connected sum defines a group action
\[  E^m_{\mathrm{U}}(\sqcup_{k=1}^r S^{p_k}) \times C^{-1}([f_1], \dots , [f_r]) \to C^{-1}([f_1], \dots , [f_r]). \]

\begin{problem}
Find a range of dimensions $(m; p_1, \dots, p_r)$ and connectivity assumptions on $N_k$ for which the above action is transitive.
\end{problem}


Finally we comment that there is a lack of explicit constructions of high codimension links. There is a construction of (the image of) the \emph{Borromean rings} $S^{2k-1}\sqcup S^{2k-1}\sqcup S^{2k-1}\to \mathbb{R}^{3k}$:
\begin{equation*}
2|\mathbf{x}|^2+|\mathbf{y}|^2=1,
\mathbf{z}=0;
\quad\sqcup\quad
2|\mathbf{y}|^2+|\mathbf{z}|^2=1,
\mathbf{x}=0;
\quad\sqcup\quad
2|\mathbf{z}|^2+|\mathbf{x}|^2=1,
\mathbf{y}=0;
\end{equation*}
where $\mathbf{x}=(x_1,\dots,x_k)$,
$\mathbf{y}=(x_{k+1},\dots,x_{2k})$,
$\mathbf{z}=(x_{2k+1},\dots,x_{3k})$ and $| \cdot |$ denotes the usual norm in~$\R^{k}$.
The Borromean rings generate the $\Q$-linear space $E^{3k}_{\mathrm{B}}(\sqcup_{k=1}^3 S^{2k-1})\otimes\Q\cong\Q$.
It would be interesting to obtain a shorter proof of Theorem~\ref{thm-1a} using either a generalization of this explicit construction
and the Milnor invariants or identities in Lie algebras (cf.~the proof of Lemma~\ref{lalgb} for the case when $x_1=1$).

It would be also interesting to determine when the set of isotopy classes of (Brunnian) links is finite without the codimension $>2$ assumption.



\subsection*{Acknowledgements}
We would like to thank P.~Akhmetiev, I.~Arzhantsev, S.~Duzhin, A.~Kanel, A.~Klyachko, S.~Melikhov, V.~Nezhinsky, V.~Petrogradsky, K.~Orr and A.~Skopenkov for helpful discussions.  The first two authors would like to thank Shmuel Weinberger and the University of Chicago for hosting them during the winter term of 2003.



\bibliographystyle{amsplain}


\noindent
\textsc{Diarmuid Crowley\\
Hausdorff Research Institute for Mathematics, \\
Poppelsdorfer Allee 82, \\
D-53115 Bonn, Germany}\\
\texttt{diarmuidc23@gmail.com} \quad \url{http://www.dcrowley.net}

\vskip 0.5cm
\noindent
\textsc{Steven C. Ferry\\
Department of Mathematics, Rutgers University,\\
Hill Center, Busch Campus, \\
Piscataway, NJ 08854-8019, USA} \\
\texttt{sferry@math.rutgers.edu} \quad \url{http://www.math.rutgers.edu/~sferry/}

\vskip 0.5cm
\noindent
\textsc{Mikhail Skopenkov\\
King Abdullah University of Science and Technology, \\
4700 Thuwal, 23955-6900, Kingdom of Saudi Arabia \\
and\\
Institute for information transmission problems of the Russian Academy of Sciences, \\
Bolshoy Karetny per. 19, bld. 1, Moscow, 127994, Russian Federation}\\
\texttt{skopenkov@rambler.ru} \quad \url{http://skopenkov.ru}

\end{document}